\newtheorem{thm}{Theorem}
\newtheorem{thmx}{Theorem} 
\newtheoremstyle{lemmma}{4mm}{1mm}{\itshape}{ }{\bfseries}{.}{ }{}
\newtheorem{lemmaA}{Lemma A\ignorespaces}
\newtheorem{lemmaB}{Lemma B\ignorespaces}
\newtheorem{prop}[thm]{Proposition}
\newtheorem{lemma}[thm]{Lemma}
\theoremstyle{definition}
\newtheorem{defn}{Definition}
\newtheorem{ex}{Example}
\theoremstyle{remark}
\newtheorem{remark}{Remark}
\newtheorem{notation}{Notation}
\numberwithin{equation}{section}
\def\R{\mathbb{R}}
\def\Z{\mathbb{Z}}
\def\O{\mathrm{O}^{\uparrow}_{+}}
\def\OO{\widehat{\mathrm{O}}^{\uparrow}_{+}}
\def\UO{\widetilde{\mathrm{O}}^{\uparrow}_{+}}
\def\GL{\mathrm{GL}}
\def\SS{\mathbb{S}}
\def\V{\mathbb{V}}
\def\NG2{\mathrm{G}^-_2(\R^{2,n+1})}
\def\MK2{\R^{2,n+1}}
\def\E{\mathcal{E}^{1,n}}
\def\EC{\mathcal{E}^{1,n}_{\mathrm{I}}}
\def\ECC{\mathcal{E}^{1,n}_{\mathrm{II}}}
\begin{document}

%
\title[]{On the restricted conformal group \\of the $(1+n)$-Einstein static universe}

\author{Olimjon Eshkobilov}
\address{(O. Eshkobilov) Dipartimento di Matematica ``Giuseppe Peano'', Universit\`a di Torino,
Via Carlo Alberto 10, I-10123 Torino, Italy}
\email{olimjon.eshkobilov@edu.unito.it, olimjon.eshkobilov@polito.it
}

\author{Emilio Musso}
\address{(E. Musso) Dipartimento di Scienze Matematiche, Politecnico di Torino,
Corso Duca degli Abruz\-zi 24, I-10129 Torino, Italy}
\email{emilio.musso@polito.it}

\author{Lorenzo Nicolodi}
\address{(L. Nicolodi) Di\-par\-ti\-men\-to di Scienze Ma\-te\-ma\-ti\-che, Fisiche e Informatiche,
Uni\-ver\-si\-t\`a di Parma, Parco Area delle Scienze 53/A,
I-43124 Parma, Italy}
\email{lorenzo.nicolodi@unipr.it}

\thanks{Authors partially supported by
PRIN 2015-2018 ``Variet\`a reali e complesse: geometria, to\-po\-lo\-gia e analisi ar\-mo\-ni\-ca'';
by the GNSAGA of INDAM; and by the FFABR Grant 2017 of MIUR.
The present research was also partially supported by MIUR grant
``Dipartimenti di Eccellenza'' 2018–2022, CUP: E11G18000350001, DISMA, Politecnico
di Torino.}

\subjclass[2000]{53C50, 53A30}



\keywords{Lorentz conformal geometry, conformal Cartan connection, Einstein static universe,
compact quotients of the Einstein universe, restricted conformal group of the Einstein universe,
restricted conformal group of maximal dimension, Cartan domain of type IV}

\begin{abstract}

%
%

Explicit models for the restricted conformal group of the Einstein static universe of dimension
greater than two and for its universal covering group are constructed.
Based on these models, as an application
we determine all oriented
and time-oriented conformal Lorentz manifolds whose restricted conformal group has maximal dimension.
They amount to the Einstein static universe itself and two countably infinite series of its compact
quotients.


%
\end{abstract}

\maketitle

\section{Introduction}\label{s:0}

Let $\EC$ and $\ECC$ denote the conformal compactifications of Minkowski $(n+1)$-space ($n\geq 2$),
realized respectively as the sets
%
%
of oriented and unoriented null lines through the origin in
pseudo-Euclidean space
$\mathbb R^{2,n+1}$.
Endowed with the
Lorentz structures inherited from $\mathbb R^{2,n+1}$,
they are known in the literature with the name of (compact) $(1+n)$-{\em Einstein universes}.
Topologically, $\EC \cong  \SS^1 \times \SS^n$ and $\ECC \cong  \SS^1 \times \SS^n/\{\pm 1\}$,
where $-1$ acts by the antipodal map on both factors. The space $\EC$ is orientable. Instead,
if $n$ is even,
$\ECC$ is not orientable, and $\EC$ is an orientable double covering of $\ECC$.
%
%
The $(1+n)$-{\em Einstein static universe} is
the product space $\E \cong \R\times \SS^n$ with the Lorentz product metric
$-dt^2 + g_{\SS^n}$, where $g_{\SS^n}$ denotes the standard metric of $\SS^n$
(cf. \cite{BCDG, Ei, HE}). The space $\E$ is the universal covering of both $\EC$ and
$\ECC$.



\vskip0.2cm

The Einstein universes $\EC$, $\ECC$ ($n$ odd), and $\E$ are examples of
oriented, time-oriented, conformal Lorentz manifolds of dimension $n+1$.
For an oriented, time-oriented, conformal Lorentz
manifold $(\mathbb M,[g])$ of dimension $n + 1 \geq 3$, we let $\mathcal{C}^{\uparrow}_+(\mathbb{M})$
denote the \textit{restricted conformal group} of $\mathbb M$, i.e., the group of conformal
transformations preserving orientation and time-orientation.
%
It is well-known that
%
%
$\mathcal{C}^{\uparrow}_+(\EC) \cong\O(2,n+1)$ and $\mathcal{C}^{\uparrow}_+(\ECC)\cong\O(2,n+1)/\{\pm I\}$,
where $\O(2,n+1)$ is the identity component of $\mathrm O(2,n+1)$, the pseudo-orthogonal group
of a scalar product of signature $(2,n+1)$ (cf. \cite{CaKe, Cecil} and Section \ref{s:2:s:3}).
As for $\E$, it is known that
$\mathcal{C}^{\uparrow}_+(\E) \cong \OO(2,n+1)$, where $\OO(2,n+1)$
is a
central extension of $\O(2,n+1)$ which can be obtained as a
quotient of index 2 of the universal covering group $\UO(2,n+1)$ of $\O(2,n+1)$.
In the following, $\OO(2,n+1)$ will be referred to as the \textit{canonical covering} of $\O(2,n+1)$.
It is important to observe that:
(1) both the Lie groups $\OO(2,n+1)$ and $\UO(2,n+1)$
do not have faithful finite dimensional
representations, so that there are no nice models for them as Lie groups of matrices;\footnote{In this respect,
observe that $\UO(2,3)\cong \widetilde{\mathrm{Sp}}(4,\mathbb R)$, the universal covering group of
${\mathrm{Sp}}(4,\mathbb R)$.
}
(2) the restricted conformal groups of
$\EC$, $\ECC$ ($n$ odd), and $\E$
have the largest possible dimension for a Lorentz manifold
of dimension $n+1$, namely $N$ $=$ $(n+3)(n+2)/2$ (cf. Section \ref{s:1} for more theoretical details on the restricted conformal group
of a Lorentz manifold).

\vskip0.2cm

The purposes of this paper are twofold. The first is to provide explicit
models for
the Lie groups $\OO(2,n+1)$ and $\UO(2,n+1)$, that is, to describe the
underlying group manifolds
and the respective group multiplications.
The second purpose is to use the models for $\OO(2,n+1)$ and $\UO(2,n+1)$
to address the question of the
characterization of
oriented, time-oriented, conformal Lorentz manifolds $(\mathbb M^{n+1}, [g])$
($n \geq 2$)
whose {restricted conformal group} $\mathcal{C}^{\uparrow}_+(\mathbb{M})$
has maximal dimension.
\vskip0.2cm
The main results of the paper are presented in three theorems, which we now illustrate individually.

\vskip0.1cm
Theorem A provides explicit models for the canonical covering $\OO(2,n+1)$
and for the universal covering $\UO(2,n+1)$.
The underlying group manifolds are described explicitly
as hypersurfaces in $\OO(2,n+1)\times \mathbb R$ and in $\mathrm{Spin}(2,n+1)\times \R$,
and the respective group multiplications are given by single global formulas.
%
Our approach was inspired by
%
the basic construction of
a manifold underlying the universal covering
of a Lie group $G$ with
$\pi_1(G) = \mathbb Z$ given in \cite{Ra},
and
by the construction of nontrivial central extensions of the real symplectic group $\mathrm{Sp}(2n,\R)$,
such as the circle extension $\mathrm{Mp}^{\mathrm c}(2n,\R)$ and
the universal covering group
${\widetilde{\mathrm{Sp}}(2n,\R)}$ (cf. \cite{Ra, RoRa}).
The idea of this method, in turn, has its origin in the classical work of Bargmann \cite{Ba, Ba1} on the
irreducible unitary representations of the Lorentz group.

\vskip0.1cm
Theorem B proves that the Lie group $\OO(2,n+1)$ constructed in Theorem A is indeed isomorphic
to the restricted conformal
group of the Einstein static universe $\E$.
It also describes the restricted conformal groups of
two countably infinite series
of compact quotients of $\E$, namely
$\mathcal{E}^{1,n}_{\mathrm{I},k}$ ($k \geq 1$) and $\mathcal{E}^{1,n}_{\mathrm{II},k}$ ($k \geq 0$; $n$ odd),
called the {\em integral compact forms of the first and second kind with index $k$}, respectively.
In particular, $\mathcal{E}^{1,n}_{\mathrm{I},1}=\EC$ and $\mathcal{E}^{1,n}_{\mathrm{II},0}= \ECC$,
which are referred to as the {\em standard compact forms}.
Interestingly enough, the restricted conformal group of any integral compact form attains
the maximum dimension $N$.
It is important to observe that
two integral compact forms with different indices in the same series,
as well as
two integral compact forms in different series, cannot be conformally
equivalent. This is proved in Proposition \ref{prop:inequiv}.
%
%

\vskip0.1cm
Theorem C proves that
if a connected, oriented, time-oriented, conformal Lorentz manifold
of dimension $n+1$ ($n\geq 2$) has a restricted conformal group of maximal dimension $N$,
then it is conformally equivalent to either $\E$
or to an integral compact form
$\mathcal{E}^{1,n}_{\mathrm{I},k}$ or $\mathcal{E}^{1,n}_{\mathrm{II},k}$.
%
The above characterization provides
conformally nonequivalent geometric models
for the
Lorentz manifolds with an essential\footnote{The conformal group of a Lorentz manifold $(\mathbb M , [g])$
is said to be essential if it is strictly larger than the isometry group of any metric in the conformal class of $g$
(cf. \cite{AL1,AL3}).}
restricted conformal group
of maximal dimension. Observe in particular that these models are
all conformally flat.
An interesting problem is to investigate
the possibility of providing
for the examples studied by
Alekseevsky \cite{AL2} suitable geometric models which are
locally but not globally conformally equivalent to each other.

We conclude by recalling that
in the Riemannian case, in contrast to the result of Theorem C,
a conformal Riemmannian manifold $(\mathbb M,[g])$ of dimension $n \geq 3$ with
a conformal group of maximal dimension must be conformally
diffeomorphic to $\SS^n$ with its natural conformal structure (cf. \cite{K1, LeFe, Ob}).

\vskip0.2cm

The paper is organized as follows. Section \ref{s:1} collects and reformulates some known
facts about conformal Lorentz geometry
for the purpose of the discussion.
%
More specifically, it describes the
Einstein static universe and its compact quotients,
and among the compact quotients distinguishes two
countably infinite families, the {integral compact forms} of the first and second kind.
The construction of the Cartan conformal bundle
and of the normal conformal
Cartan connection for an oriented, time-oriented, conformal Lorentz manifold $\mathbb M$ of dimension $n+1\geq 3$
is then briefly recalled.
%
%
A classical result of Cartan and Kobayashi on the conformal group of a
Riemannian manifold (cf. \cite{Ca3,K1})
is then extended to the case of the restricted conformal group of a Lorentz manifold.
This technical result,
which was indeed largely predictable,
%
will
play an important role in the proofs of Theorems A and B.
Finally, the special examples
$\EC$ and $\ECC$ are discussed.

\vskip0.1cm

Section \ref{s:3} constructs the canonical covering $\OO(2,n+1)$ of $\O(2,n+1)$ using the transitive action of $\O(2,n+1)$ on the irreducible
bounded symmetric domain of type IV, regarded as a homogeneous space of $(n+1) \times 2$ real matrices
(cf. \cite{BO, Ca1, Hel, Hua, HuaBook, Sat}).
The group manifold of the canonical covering is realized explicitly as an embedded submanifold
of the product $\O(2,n+1)\times \R$ and the group multiplication defining the Lie group structure is given
by a single global formula. The center of $\OO(2,n+1)$ is also computed. This is the content of
Theorem \ref{ThmA}.
By a similar construction, an explicit realization of the universal covering group of $\O(2,n+1)$ is obtained.
We then prove that $\OO(2,n+1)$ is isomorphic to the restricted conformal group of the
Einstein static universe and describe the restricted conformal groups of the integral compact
forms. This is the content of Theorem \ref{ThmB}.

\vskip0.1cm
Section \ref{s:4} proves that if the restricted conformal group of an oriented, time-oriented Lorentz
manifold $\mathbb{M}$ has maximal dimension, then $\mathbb{M}$ is conformally equivalent to either
the simply connected Einstein static universe, or to one of its integral compact forms. This is
the content of Theorem \ref{ThmC}.

\vskip0.1cm


\section{The Einstein static universe and the integral compact forms}\label{s:1}

In this section we
introduce two infinite series of compact quotients of $\E$, the so-called {\em integral compact forms
of the first and second kind}. These include as special cases
the compact Einstein universes $\EC$ and $\ECC$ ($n$ odd),
also called {\em standard compact forms of the
first and second kind}.
We then present a direct construction
of the Cartan conformal bundle and of the normal conformal Cartan connection
for a conformal Lorentz manifold
without resorting to the abstract theory of prolongation of $G$-structures.
We reformulate, for conformal Lorentz manifolds,
a classical result on the conformal group of a Riemannian manifold.
As an example, we describe the restricted conformal groups of the standard
forms $\EC$ and $\ECC$ ($n$ odd).

\subsection{Preliminaries}\label{s:1:s:0}
For given integers $p,q$, $1\le p\le 2$, $p<q$, let $\R^{p,q}$ denote $\R^m$, $m=p+q$, with the
nondegenerate scalar product
%
%
\begin{equation}\label{s:1:s:0:f1}
 \langle x,y\rangle = -x^0y^0 + (-1)^{p-1}x^1y^1+\sum_{j=2}^{m-1}x^jy^j = {}^t\!x G y
  \end{equation}
of signature $(p,q)$, where $x^0,\dots,x^{m-1}$ denote the coordinates with respect to the standard basis
$e=(e_0, \dots, e_{m-1})$ of $\R^m$.
Let $\O(p,q)$ denote the identity
component of the pseudo-orthogonal group of \eqref{s:1:s:0:f1}.
For the purpose of conformal geometry, instead of the canonical coordinates, it is convenient
to use the coordinates $u = {}^t\!(u^0,\dots,u^{m-1})$ defined by
$u = \mathcal D_px$, where $\mathcal{D}_p\in \mathrm{GL}(m,\R)$ is given by
\[
 {\sqrt{2}}\mathcal{D}_p := E^0_0 + (-1)^{p-1}E^0_{m-1} + E^{m-1}_0- (-1)^{p-1}E^{m-1}_{m-1}
 +{\sqrt{2}}\sum_{j=1}^{m-2}E_j^j.
  \]
Here $E^h_k$,
$0\le h,k \le m-1$, denotes the elementary $m\times m$ matrix with 1
in the $(h,k)$ place and zero elsewhere.
%
%
%
In the coordinates $u = {}^t\!(u^0,\dots,u^{m-1})$ the scalar product
\eqref{s:1:s:0:f1} takes the form
\begin{equation}\label{moebius-sp}
  -u^0v^{m-1} - u^{m-1}v^0 + (-1)^{p-1}u^1v^1 +\sum_{j=2}^{m-2}u^jv^j.
  \end{equation}
The pseudo-orthogonal group $\mathrm{M}^{\uparrow}_+(p,q)$ of \eqref{moebius-sp} is
the image of the faithful representation of $\O(p,q)$ given by
\begin{equation}\label{IS1}
\chi :  \O(p,q)\ni \mathbf{X} \longmapsto  \mathcal{D}_p  \mathbf{X}  \mathcal{D}_{p}^{-1}\in \mathrm{GL}(m,\R),
  \end{equation}
that is, $\mathrm{M}^{\uparrow}_+(p,q) = \mathcal{D}_p  \O(p,q) \mathcal{D}_{p}^{-1}$.
The Lie algebra of $\mathrm{M}^{\uparrow}_+(p,q)$ will be denoted by $\mathfrak{m}(p,q)$.

\vskip0.1cm
Let $\mathrm{H}^{\uparrow}_+(p,q)\subset \mathrm{M}^{\uparrow}_+(p,q)$ be the parabolic subgroup
\begin{equation}\label{prb}
 \mathrm{H}^{\uparrow}_+(p,q):
 %
 =\{\mathbf{X} \in \mathrm{M}^{\uparrow}_+(p,q) \mid \mathbf{X}e_0 = r e_0, \, r\in \R, \, r>0\}.
  \end{equation}
The elements of $\mathrm{H}^{\uparrow}_+(p,q)$ can be written as
\[
\mathrm{\mathbf{X}}(r,B,y)=\left(
                     \begin{array}{ccc}
                       r& {}^*\!y  \mathrm{B}  & {^*\!y  y}/{2r}\\
                       0 & \mathrm{B} & {y}/{r} \\
                       0 & 0 & {r}^{-1} \\
                     \end{array}
                   \right),
                    \]
where $r>0$, $y={}^t\!(y^1,\dots,y^{m-2})\in \R^{m-2}$, $\mathrm{B}\in \O(p-1,q-1)$ and ${}^*\!y=\left((-1)^{p-1}y^1,y^2,\dots,y^{m-2}\right)\in (\R^{m-2})^*$.\footnote{If $p=1$, then $\O(p-1,q-1)$ is the special orthogonal group $\mathrm{SO}(q-1)$.} Let $\mathfrak{h}(p,q)$ be the Lie algebra of $\mathrm{H}^{\uparrow}_+(p,q)$.

\subsection{The Einstein static universe}\label{s:1:s:1}
The $(1+n)$-dimensional Einstein static universe $\E$ is the hypersurface of $\R^{1,n+1}\cong \R\times \R^{n+1}$ defined by
\[
 \E :=\left\{(\tau,x)\in \R^{1,n+1} \mid  {}^t\!x x = 1 \right\}\cong \R\times \SS^n,
  \]
equipped with the Lorentz metric
\[
  \widehat{\ell}_{\mathcal{E}} = -d\tau^2+\imath^\ast\Big(\sum_{j=1}^{n+1}(dx^j)^2\Big),
   \]
where $\imath : \SS^n \hookrightarrow \R^{n+1}$ denotes the inclusion map.
On $\E$ we consider the time-orientation given by requiring that the unit timelike vector field $\partial_{\tau}$
is future-directed, and the orientation induced by the volume form
\[
  {\Omega_{\widehat{\mathcal{E}}}}{|_{(\tau,x)}}
    =(d\tau\wedge \imath^\ast_{x}(dx^1\wedge\cdots\wedge dx^{n+1}))|_{T_{(\tau,x)}(\E)}.
    \]
The motivation for this terminology is that $\E$ is a static solution of Einstein's equation with a positive
cosmological constant \cite{CGP-BAMS, HE}. This solution was proposed by Einstein himself as
a model of a closed universe filled
with a perfect fluid of constant pressure and energy density \cite{Ei}. The physical relevance of the Einstein
universe is due to the fact that every
Friedmann--Robertson--Walker spacetime can be conformally embedded in $\E$ \cite{HE}.

\subsection{Compact quotients: The integral compact forms}\label{s:1:s:2}

Let $\mathbf{\tau}_*>0$ be a positive real number.
The subgroup $\mathcal{T}_{\tau^*}$ of the isometry group $\R\times \mathrm{SO}(n+1)$
generated by the translation
$\mathrm{T}_{\tau_*}:(\tau,\mathrm{y})\mapsto (\tau+\tau_*,\mathrm{y})$ acts properly discontinuously on $\E$.
This action preserves the Lorentz metric $\widehat{\ell}_{\mathcal{E}}$, the volume element
$\Omega_{\widehat{\mathcal{E}}}$, and the time-orientation.
The quotient manifold $\E/\mathcal{T}_{\tau^*}$ possesses a unique
Lorentz metric $\ell_{\mathcal{E},\tau_*}$, and a unique orientation and time-orientation,
such that the covering map
$\pi_{\tau_*}: \E\to \E/\mathcal{T}_{\tau^*}$ is a local isometry preserving orientation and time-orientation.

\begin{defn}\label{d:icf1stk}
If $\tau_*=2k\pi$, $k$ a positive integer, the quotient manifold $\E/\mathcal{T}_{\tau^*}$, denoted by
$\mathcal{E}^{1,n}_{\mathrm{I},k}$, is referred to as the {\it integral compact form of the first kind with index $k$}.
In this case, the covering $\pi_{\tau_*}$
will be denoted by $\pi_{\mathrm{I},k}$.
When $k=1$, the integral compact form $\mathcal{E}^{1,n}_{\mathrm{I},1}$ coincides with $\mathcal{E}^{1,n}_{\mathrm{I}}$,
and is referred to as the {\it standard compact form of the first kind}.
%
The integral forms $\mathcal{E}^{1,n}_{\mathrm{I},k}$ are all diffeomorphic to $\SS^1 \times \SS^n
\subset \mathbb C \times \mathbb R^{n+1}$.
\end{defn}

\begin{remark}

The standard compact form $\mathcal{E}^{1,n}_{\mathrm{I}} = (\mathbb R/2\pi\mathbb Z) \times \mathbb S^n
\cong \mathbb S^1 \times \mathbb S^n$.
Here $\mathbb S^1$
is the unit circle viewed as
a multiplicative subgroup of $\mathbb C$,
and the isomorphism $\mathbb R/2\pi\mathbb Z \cong \mathbb S^1$ is induced by $t \mapsto e^{it}$.
The conformal structure on $\mathcal{E}^{1,n}_{\mathrm{I}}$ is given by the Lorentz metric
$
-d\theta^2 + g_{\mathbb S^n}$, where
$\theta: \mathbb R/2\pi\mathbb Z \to \mathbb S^1$ is the argument function.
For a positive integer $k$,
the map $p_k :
\SS^1 \times \SS^n\ni (\mathrm x, \mathrm y) \longmapsto ({\mathrm x}^k, \mathrm y)\in \SS^1 \times \SS^n$
defines a $k:1$ covering.
%
%
Therefore, the integral form $\mathcal{E}^{1,n}_{\mathrm{I},k}$ can be thought of as
$\SS^1 \times \SS^n$ with the conformal structure induced by the Lorentz metric $\ell_{\mathcal{E},k} =
 {p_k}^\ast(-d\theta^2 + g_{\mathbb S^n})= -k^2 d\theta^2 + g_{\mathbb S^n}$.
Notice that
$p_k : \mathcal{E}^{1,n}_{\mathrm{I},k} \to \mathcal{E}^{1,n}_{\mathrm{I}}$
is a $k:1$ Lorentzian covering map.

\end{remark}

If $n$ is odd, the map $\mathrm{T}'_{\tau_*}:(\tau,\mathrm{y})\mapsto (\tau+\tau_*,-\mathrm{y})$ is an isometry
that preserves the orientation and the time-orientation. The subgroup $\mathcal{T}'_{\tau^*}$ generated
by $\mathrm{T}'_{\tau_*}$ acts properly discontinuously on $\E$. Consequently, the quotient manifold $\E/\mathcal{T}'_{\tau^*}$
inherits a unique Lorentz metric $\ell'_{\mathcal{E},\tau_*}$ and a unique orientation and time-orientation such
that the covering map
$\pi'_{\tau_*}:\E\to
\E/\mathcal{T}'_{\tau^*}$ is a local isometry preserving
orientation and time-orientation.

\begin{defn}\label{d:icf2ndk}
If $n$ is odd and $\tau_*= (2k+1)\pi$, $k$ a non-negative integer, the quotient manifold $\E/\mathcal{T}'_{\tau^*}$,
denoted by $\mathcal{E}^{1,n}_{\mathrm{II},k}$, is called the {\it integral compact form of the second kind with index $k$}.
In this case, the covering $\pi'_{\tau_*}$
will be denoted by $\pi_{\mathrm{II},k}$.
When $k=0$, the integral compact form $\mathcal{E}^{1,n}_{\mathrm{II},0}$ coincides with $\mathcal{E}^{1,n}_{\mathrm{II}}$,
and is referred to as the {\it standard compact form of the second kind}.
Notice that also the integral forms $\mathcal{E}^{1,n}_{\mathrm{II},k}$ are diffeomorphic to $\SS^1 \times \SS^n$.
If $n$ is even, this is not true anymore.
\end{defn}


\subsection{The Cartan conformal bundle}\label{s:2:s:1}

Let us start by recalling some definitions and fixing some notation.

\begin{defn}

Two Lorentz metrics $g$ and $g'$ on a manifold $\mathbb M$ are said to be {\em conformal} to each other if
$g' = r^2 g$, for some smooth function $r : \mathbb M \to \mathbb R$. The conformal class of $g$ is
$[g] = \{g' \mid g' \,\,\text{is conformal to}\,\, g\}$.
A {\em conformal Lorentz structure} on $\mathbb M$ amounts to the assignment of a conformal class $[g]$ of Lorentz metrics
on $\mathbb M$.

A {\em conformal Lorentz manifold} $(\mathbb M, [g])$
is an oriented, connected smooth manifold $\mathbb{M}$ of dimension $n+1\ge 3$ with a conformal Lorentz structure $[g]$.
%
%
We assume that the conformal structure is time-orientable, i.e., that the bundle $\dot{\mathrm{N}}(\mathbb{M})$ of
nonzero timelike tangent vectors
is disconnected. A  time-orientation is defined by the choice of a
connected component
$\dot{\mathrm{N}}^\uparrow(\mathbb{M})$
of $\dot{\mathrm{N}}(\mathbb{M})$.

A {\em conformal transformation} of $(\mathbb M, [g])$ is a diffeomorphism $F : \mathbb M \to \mathbb M$
that preserves the conformal class $[g]$, that is, $F^\ast ([g]) = [g]$. A {\em restricted conformal transformation}
is a conformal transformation
which, in addition, preserves the orientation and the time-orientation.
Let $\mathcal{C}^{\uparrow}_+(\mathbb{M})$ denote the group of all restricted conformal transformations
of $(\mathbb M, [g])$. We call $\mathcal{C}^{\uparrow}_+(\mathbb{M})$
the {\em restricted conformal group} of $\mathbb M$.
It is a classical result that $\mathcal{C}^{\uparrow}_+(\mathbb{M})$ is a Lie transformation
group.

\end{defn}




Let $\mathcal{A}=(A_1,\dots,A_{n+1})$ be a positive basis of the tangent space $T_p(\mathbb{M})$ at
a point $p\in \mathbb M$ and let $(A^1,\dots,A^{n+1})$
be its dual basis. We say that $\mathcal{A}$ is a {\it positive linear conformal frame} at $p\in \mathbb M$ if
$$
  -A^1\odot A^1+\sum_{j=2}^{n+1}A^j\odot A^{j}\in [g]_{|_p}, 
    \quad A_1
\in \dot{\mathrm{N}}^{\uparrow}(\mathbb{M}))_{|_p}.
    $$
The set $\mathrm{C}\O(\mathbb{M})$ of all positive linear conformal frames at all points of $\mathbb M$
defines a principal fiber bundle over $\mathbb{M}$, $\pi_{\mathrm{C}}:\mathrm{C}\O(\mathbb{M})\to \mathbb{M}$,
with structure group
\begin{equation}\label{s:2:s:1:f1}
 \mathrm{C}\O(1,n)=\left\{r\mathrm{B} \mid r>0, \,\mathrm{B}\in \O(1,n)\right\}.
  \end{equation}
As a subbundle of the bundle $\mathrm L(\mathbb M)$ of linear frames over $\mathbb M$, $\mathrm{C}\O(\mathbb{M})$
defines a $\mathrm{C}\O(1,n)$-structure on $\mathbb M$.
By a {\it positive linear conformal frame field} is meant a local section of $\pi_{\mathrm{C}}$.
Let $\mathcal{CO}^{\uparrow}_+ = \{\mathcal{A}^{\alpha}\}_{\alpha\in \mathcal{C}}$ denote
the sheaf of positive linear conformal frame fields.
For each $\alpha\in \mathcal{C}$, let $U_{\alpha}\subset \mathbb{M}$ be the domain of definition of $\mathcal{A}^{\alpha}$.
Let $\mathcal{C}_{2}=\{(\alpha,\beta)\in \mathcal{C}\times \mathcal{C}\, | \, U_{\alpha}\cap U_{\beta}\neq \emptyset\}$,
and let
 $\Gamma=\{\mathrm{A}_{\alpha}^{\beta}\}_{(\alpha,\beta)\in \mathcal{C}_2}$ denote the
{\u{C}}ech 1-cocycle
defined by the transition functions of $\mathrm{C}\O(\mathbb{M})$.
For each $\mathcal{A}^{\alpha}\in \mathcal{CO}^{\uparrow}_+$, let
$\omega_{\alpha}= {}^t\!(\omega^1_{\alpha},\dots,\omega^{n+1}_{\alpha})$ be the corresponding dual coframe.
Then, if $(\alpha,\beta)\in \mathcal{C}_2$,
\begin{equation}\label{s:2:s:2:f1}
  \mathcal{A}^{\beta} = \frac{1}{r_{\alpha}^{\beta}}\mathcal{A}^{\alpha}  \mathrm{B}^{\beta}_{\alpha},
   \end{equation}
where $r_{\alpha}^{\beta} : U_{\alpha}\cap U_{\beta}\to \R^{+}$ and
$\mathrm{B}_{\alpha}^{\beta}:U_{\alpha}\cap U_{\beta}\to \O(1,n)$.\footnote{Here $\R^+ = \{r\in \R \,|\, r>0\}$.}
Define
\begin{equation}\label{s:2:s:2:f2}
 y_{\alpha}^{\beta}= {}^t\!({y}_{\alpha}^{\beta, 1},{y}_{\alpha }^{\beta, 2},
   \dots,{y}_{\alpha}^{\beta, n+1}): U_{\alpha}\cap U_{\beta}\to \R^{1,n}
    \end{equation}
by requiring that
\[
  d \log r_{\alpha}^{\beta} =(\omega^1_{\alpha},\dots,\omega^{n+1}_{\alpha}) y_{\alpha}^{\beta}
  \]
and put
\[
{}^*\!y_{\alpha}^{\beta}=(-y_{\alpha}^{\beta, 1},
  y_{\alpha }^{\beta, 2},\dots,y_{\alpha}^{\beta, n+1}).
    \]
Let
\[
   \dot{\mathrm{A}}_{\alpha}^{\beta}:U_{\alpha}\cap U_{\beta}\to \mathrm{H}^{\uparrow}_+(2,n+1)
     \]
be the smooth $\mathrm{H}^{\uparrow}_+(2,n+1)$-valued map defined by
\[
  \dot{\mathrm{A}}_{\alpha}^{\beta}=\left(
                     \begin{array}{ccc}
                       r_{\alpha}^{\beta} & {}^*\!y_{\alpha}^{\beta} \mathrm{B}^{\beta}_{\alpha}  & {^*\!y_{\alpha}^{\beta} y_{\alpha}^{\beta}}/{2r_{\alpha}^{\beta}}\\
                       0 & \mathrm{B}^{\beta}_{\alpha} & {y_{\alpha}^{\beta}}/{r_{\alpha}^{\beta}} \\
                       0 & 0 & {1}/{r_{\alpha}^{\beta}} \\
                     \end{array}
                   \right).
                   \]
It is now a computational matter to check that $\dot{\Gamma}=\{\dot{\mathrm{A}}_{\alpha}^{\beta}\}_{(\alpha,\beta)\in \mathcal{C}_2}$
defines a {\u{C}}ech 1-cocycle with values in $\mathrm{H}^{\uparrow}_+(2,n+1)$ on the given covering of $\mathbb M$.
Consequently, see for instance \cite[Proposition 5.2, page 52]{KN},
there exists a unique principal fiber bundle $\pi_{\mathrm{Q}}:\mathrm{Q}(\mathbb{M})\to \mathbb{M}$,
with structure group $\mathrm{H}^{\uparrow}_+(2,n+1)$,
admitting
$\dot{\Gamma}$ as a 1-cocycle of transition functions
and with an atlas $\dot{\mathcal{CO}}^{\uparrow}_+=\{\dot{\mathcal{A}}^{\alpha}\}_{\alpha\in \mathcal{C}}$
of local sections.


\begin{defn}
We call $\mathrm{Q}(\mathbb{M})$ the (restricted) {\it Cartan conformal bundle} of $\mathbb{M}$.
\end{defn}

\begin{remark}
The construction of $\mathrm{Q}(\mathbb{M})$ from
$\mathrm{C}\O(\mathbb{M})$ is a particular instance
of the procedure known as prolongation of a $G$-structure (see \cite{K1}).
%
Starting from a $G$-structure $P\subset \mathrm L(\mathbb M)$, there is a canonical way to construct
a $G^{1}$-structure $P^1\subset \mathrm L(P)$. This construction can be repeated on $P^1$ to
obtain a $G^2$-structure $P^2$, and so on.
%
%
If $G\subset\GL(M,\R)$
is a Lie subgroup of finite order, say $k$,
then $G^{k} = \{e\}$, and the $k$th prolongation $P^k$ is an $\{e\}$-structure.
The bundle $P_{k-1}$ encodes all relevant pieces of information about the local geometry of the $G$-structure.
Usually, geometries of infinite order (e.g., complex, contact or
symplectic geometries) do not have local invariants. In the case at hand,
$\mathrm{Q}(\mathbb{M})=\mathrm{C}\O(\mathbb{M})^1$
and $\mathrm{Q}(\mathbb{M})^1$ is an $\{e\}$-structure on $\mathrm{Q}(\mathbb{M})$,
since the group $\mathrm{C}\O(1,n)$ has order 2.
\end{remark}

\subsection{The normal conformal connection}\label{s:2:s:2}

Let $\mathcal{A}^{\alpha}$ be a positive linear conformal frame field of $\mathbb{M}$
defined on $U_\alpha$
and
$\omega_{\alpha}= {}^t\!(\omega^1_{\alpha},\dots,\omega^{n+1}_{\alpha})$ the corresponding dual coframe field.
Then the quadratic form
\[
 \ell_{\alpha}=-\omega_{\alpha}^1\odot \omega_{\alpha}^1 +\sum_{j=2}^{n+1}\omega_{\alpha}^j\odot \omega_{\alpha}^j=\sum_{i,j=1}^{n+1}\widetilde{\delta}_{ij}\omega_{\alpha}^i\odot \omega_{\alpha}^j,
\quad \widetilde{\delta}_{ij}=\widetilde{\delta}_{ji},\, 1\le i,j\ \le n+1
  \]
  belongs to the conformal class $[g]$ on $U_\alpha$.
Consequently, there exists a unique $\mathfrak{o}(1,n)$-valued exterior differential 1-form
$\theta_{\alpha}= ({\theta_\alpha}^i_j)
\in \Omega^1(U_{\alpha})\otimes \mathfrak{o}(1,n)$,
the {\it Levi-Civita connection form} of $\ell_{\alpha}$
with respect to the pseudo-orthogonal frame field $\mathcal{A}^{\alpha}$, such that
\[
  d\omega_{\alpha}=-\theta_{\alpha}\wedge \omega_{\alpha}.
  \]
Consider the curvature form $\Theta_{\alpha}\in \Omega^2(U_{\alpha})\otimes \mathfrak{o}(1,n)$ defined by
\[
  \Theta_{\alpha} = d\theta_{\alpha}+\theta_{\alpha}\wedge \theta_{\alpha}.
   \]
We write
\[
 \Theta_{\alpha j}^{\hskip0.2cm i}= \frac{1}{2}\sum_{h,k=1}^{n+1}{(\mathrm{R}_{\alpha})}^i_{jhk}
\omega_{\alpha}^h\wedge \omega_{\alpha}^k,
   \]
where the functions ${(\mathrm{R}_{\alpha})}^i_{jhk}$
are the {\it local components of the Riemann curvature tensor} of $\ell_{\alpha}$ with respect to $\mathcal{A}^{\alpha}$.
The {\it Ricci tensor components} ${(\mathrm{R}_\alpha)}_{jh}$ and the {\it scalar curvature} $\mathrm R_{\alpha}$ are
given by
\[
  {(\mathrm{R}_{\alpha})}_{jh} = \sum_{k=1}^{n+1}{(\mathrm{R}_{\alpha})}^k_{jhk},
\quad \mathrm R_{\alpha}= \sum_{h=1}^{n+1} {(\mathrm{R}_{\alpha})}_{hh}.
    \]
 Let $\eta_{\alpha}=(\eta_{\alpha,1},\dots,\eta_{\alpha,n+1})\in \Omega^1(U_{\alpha})\otimes (\R^{1,n})^\ast$ be
 the vector-valued 1-form defined by
\[
   \eta_{\alpha,j}= \frac{\mathrm{R}_{\alpha}}{2n(n-1)}\sum_{h=1}^{n+1}\widetilde{\delta}_{jh}\omega^h_{\alpha}-
 \frac{1}{n-1}\sum_{h=1}^{n+1}{(\mathrm{R}_{\alpha})}_{jh}\omega^h_{\alpha},\quad j = 1,\dots,n+1.
    \]
Next, let
\[
{}^*\!\omega_{\alpha} = (-\omega^1_{\alpha},\omega^2_{\alpha},\dots,\omega^{n+1}_{\alpha}),
 \quad {}^*\!\eta_{\alpha}= {}^t\!(-\eta_{\alpha, 1},\dots,\eta_{\alpha, n+1})
    \]
 and consider the $\mathfrak{m}(2,n+1)$-valued 1-form $\phi_{\alpha}$ given by
\[
    \phi_{\alpha}=\left(
                   \begin{array}{ccc}
                     0 & \eta_{\alpha} & 0 \\
                     \omega_{\alpha} & \theta_{\alpha} & {}^*\!\eta_{\alpha} \\
                     0 & {}^*\!\omega_{\alpha} & 0 \\
                   \end{array}
                 \right).
                   \]
By similar calculations as those performed in the Riemannian case in \cite[Chapter 1, Section 4]{Ya}
for determining
the transformation rules, under a conformal metric change, of the components
of the Levi-Civita connection
and of the modified Ricci tensor in the expression of $\eta_{\alpha,j}$,
it can be verified that if $\mathcal{A}^{\alpha}=\mathcal{A}^{\beta}  \mathrm{A}^{\alpha}_{\beta}$, then
\begin{equation}\label{gtl}
  \phi_{\alpha} =
(\dot{\mathrm{A}}^{\alpha}_{\beta})^{-1}  \phi_{\beta} \,\dot{\mathrm{A}}^{\alpha}_{\beta}+ (\dot{\mathrm{A}}^{\alpha}_{\beta})^{-1}d\dot{\mathrm{A}}^{\alpha}_{\beta}.
     \end{equation}

According to \cite[Proposition 1.4, page 66]{KN}, we can thus state the following.
%

\begin{prop}
There exists a unique exterior differential 1-form $\phi\in \Omega^1[\mathrm{Q}(\mathbb{M})]$ $\otimes$ $\mathfrak{m}(2,n+1)$
such that:
\begin{itemize}
\item if $\xi\in \mathfrak{h}(2,n+1)$, then $\phi(\xi^*)=\xi$;\footnote{Here $\xi^*$
stands for the fundamental vector field on $\mathrm{Q}(\mathbb{M})$ generated by $\xi$.}
\item $R^*_{\mathrm{X}}(\phi)=\mathrm{X}^{-1}  \phi \mathrm{X}$, for every $\mathrm{X}\in \mathrm{H}^{\uparrow}_+(2,n+1)$;
\item $\dot{\mathcal{A}}_{\alpha}^*(\phi)=\phi_{\alpha}$, $\forall \dot{\mathcal{A}}_{\alpha}\in \dot{\mathcal{CO}}^{\uparrow}_+$.
\end{itemize}
\end{prop}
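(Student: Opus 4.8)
The plan is to deduce the proposition from the cited result \cite[Proposition 1.4, page 66]{KN}, which is precisely the statement that a principal bundle equipped with a \u{C}ech cocycle of transition functions, together with a family of local $\mathfrak{g}$-valued $1$-forms that transform by the gauge law \eqref{gtl}, glues to a unique global connection-type form with the three listed equivariance properties. So the work is organizational: one must verify that the hypotheses of that proposition are met by the data $\{\phi_\alpha\}_{\alpha\in\mathcal{C}}$, the bundle $\pi_{\mathrm{Q}}:\mathrm{Q}(\mathbb{M})\to\mathbb{M}$, and its cocycle $\dot{\Gamma}=\{\dot{\mathrm{A}}_\alpha^\beta\}$, and then translate the conclusion of \cite{KN} into the form stated here.

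First I would recall that $\mathrm{Q}(\mathbb{M})$ is, by construction in Section~\ref{s:2:s:1}, the principal $\mathrm{H}^{\uparrow}_+(2,n+1)$-bundle with atlas of local sections $\dot{\mathcal{CO}}^{\uparrow}_+=\{\dot{\mathcal{A}}^\alpha\}$ and transition cocycle $\dot{\Gamma}$, i.e.\ $\dot{\mathcal{A}}^\alpha = \dot{\mathcal{A}}^\beta\,\dot{\mathrm{A}}^\alpha_\beta$ on $U_\alpha\cap U_\beta$. Next I would note that each $\phi_\alpha\in\Omega^1(U_\alpha)\otimes\mathfrak{m}(2,n+1)$ is explicitly a well-defined smooth $1$-form (built from the Levi-Civita connection form, the coframe, and the Schouten-type $1$-form $\eta_\alpha$ of the representative metric $\ell_\alpha$), and that the compatibility identity \eqref{gtl}, $\phi_\alpha = (\dot{\mathrm{A}}^\alpha_\beta)^{-1}\phi_\beta\,\dot{\mathrm{A}}^\alpha_\beta + (\dot{\mathrm{A}}^\alpha_\beta)^{-1} d\dot{\mathrm{A}}^\alpha_\beta$, is exactly the cocycle condition required by \cite[Proposition 1.4]{KN} for the local data to assemble into a global object; this was established in the preceding paragraph of the excerpt via the Riemannian computations in \cite[Chapter~1, Section~4]{Ya}. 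Applying \cite[Proposition 1.4, page 66]{KN} then yields a unique $\phi\in\Omega^1[\mathrm{Q}(\mathbb{M})]\otimes\mathfrak{m}(2,n+1)$ with $\dot{\mathcal{A}}^*_\alpha(\phi)=\phi_\alpha$ for all $\alpha$, satisfying the right-equivariance $R^*_{\mathrm{X}}\phi=\mathrm{X}^{-1}\phi\,\mathrm{X}$ for $\mathrm{X}\in\mathrm{H}^{\uparrow}_+(2,n+1)$ and reproducing $\xi$ on the fundamental vector field $\xi^*$ for $\xi\in\mathfrak{h}(2,n+1)$.

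For the first bullet I would observe that the form of $\phi_\alpha$, in particular the block structure, shows that the $\mathfrak{h}(2,n+1)$-component of $\phi_\alpha$ restricted to vertical directions reproduces the Maurer--Cartan form along the fibre, which is the content of $\phi(\xi^*)=\xi$; alternatively this is already part of the conclusion of \cite[Proposition 1.4]{KN}. For uniqueness, any two forms $\phi,\phi'$ satisfying the third property agree on the images of all local sections $\dot{\mathcal{A}}^\alpha$, hence on an open dense set of frames in each fibre; combined with the right-equivariance (second bullet), which propagates the value along the whole orbit, they agree everywhere, so $\phi=\phi'$. The only mild subtlety — and the step I would be most careful about — is making sure that the local expressions $\phi_\alpha$ as written are genuinely independent of extraneous choices (they depend a priori on the representative metric $\ell_\alpha$ attached to the frame field $\mathcal{A}^\alpha$, but that metric is canonically determined by $\mathcal{A}^\alpha$), so that the family $\{\phi_\alpha\}$ is indexed consistently by $\mathcal{C}$ and \eqref{gtl} holds on all overlaps without a cocycle obstruction; once that is in place the proposition is immediate from \cite{KN}. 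I expect no essential difficulty beyond this bookkeeping, since the transformation rules of the Levi-Civita connection and the modified Ricci tensor under conformal change are classical and already invoked.
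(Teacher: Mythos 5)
Your proposal matches the paper's own argument, which consists precisely of establishing the gauge transformation law \eqref{gtl} for the local forms $\phi_\alpha$ and then invoking \cite[Proposition 1.4, page 66]{KN} to glue them into the unique global form with the three stated properties. The extra remarks you add (uniqueness from the sections plus equivariance, and the well-definedness of each $\phi_\alpha$ from its frame field) are correct bookkeeping consistent with what the paper leaves implicit.
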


\begin{defn}
The 1-form $\phi$ is the {\it normal conformal Cartan connection} of the conformal Lorentz manifold $\mathbb{M}$.
\end{defn}

\begin{remark}\label{r:conf-prolong}
Using block matrix notation, the normal connection takes the form
\[
\phi = \left(
           \begin{array}{ccc}
             \phi_0^0 & \phi^0 & 0 \\
             \phi_0 & \widehat{\phi} & {}^*\!\phi^0 \\
             0 & {}^t\!\phi_0 & -  \phi_0^0 \\  
           \end{array}
         \right),
      \]
where $\phi^0_0$ is a scalar 1-form, $\phi_0 ={}^t\!(\phi^1_0, \dots,\phi^{n+1}_0)$ and
$\phi^0 = (\phi^0_1,\dots, \phi^0_{n+1})$ are vector-valued 1-forms, and $\widehat{\phi} = (\phi^i_j)$, $i,j =1,\dots,n+1$,
is a 1-form taking values in the Lie algebra $\mathfrak{o}(1,n)$.
The connection form $\phi$ satisfies the following basic properties.

\begin{enumerate}

\item The 1-forms $\phi^0_0$, $\phi^1_0,\dots,\phi^{n+1}_0$, $\phi^0_1,\dots,\phi^0_{n+1}$,
$\phi^i_j$, $1\leq i<j\leq n+1$, are linearly independent and define an absolute parallelism
on $\mathrm{Q}(\mathbb{M})$, the {\it normal parallelism} of the Cartan conformal bundle.

\item The 1-forms
$\phi^1_0, \dots,\phi^{n+1}_0$ are semibasic.\footnote{A form is semibasic of it
annihilates the vertical vector fields.}

\item For every
restricted
conformal transformation $F:\mathbb{M}\to \mathbb{M}$,
 there exists a unique lifted diffeomorphism $\dot{F}:\mathrm Q(\mathbb{M})\to \mathrm Q(\mathbb{M})$,
 called the {\it conformal prolongation of $F$}, that preserves the normal conformal connection $\phi$, i.e.,
$(\dot{F})^{*}(\phi)=\phi$.
Conversely, any diffeomorphism $\mathcal F$ of $\mathrm Q(\mathbb{M})$ that preserves $\phi$ arises in this way, that is,
there exists a unique restricted conformal transformation $F$
of $\mathbb M$ such that $\mathcal F = \dot F$.
In particular, if $\mathrm{Aut} (\phi)$ denotes the group of diffeomorphisms of $\mathrm Q(\mathbb{M})$ preserving $\phi$,
then the mapping $F\in \mathcal{C}^{\uparrow}_+(\mathbb{M})\mapsto \dot F \in \mathrm{Aut} (\phi)$ is an
isomorphism.
\end{enumerate}
\end{remark}

\subsection{The restricted conformal group of a Lorentzian manifold}\label{s:2:s:3}

We are now in a position to apply
to $\mathrm Q(\mathbb{M})$, with the normal parallelism induced by
the normal conformal connection,
a classical result of S. Kobayashi on the transformation group of a manifold with an absolute
parallelism \cite[Chapter I, Theorem 3.2]{K1}.

\begin{thm}[S. Kobayashi]
Let $N$ be an $n$-dimensional differentiable manifold with an absolute parallelism, i.e., a
coframe $\{\eta^1, \dots, \eta^n\}$ of globally defined 1-forms which are linearly
independent at each point of $N$. Let $G = \mathrm{Aut}(\{\eta^i\})$ be the group of automorphisms of the absolute parallelism,
i.e., the group
of diffeomorphisms $F:N \to N$, such that $F^\ast(\eta^i) = \eta^i$, $i = 1, \dots, n$.
Then, $G$ is a Lie transformation group such that $\mathrm{dim}\,\mathrm{G}$ $\leq$ $n$. More precisely,
for any $p\in N$, the mapping $F\in G \mapsto F(p) \in N$ is injective and its image
$\{F(p) \mid F\in G\}$ is a closed submanifold of $N$. The submanifold structure on this image makes $G$
into a Lie transformation group.

\end{thm}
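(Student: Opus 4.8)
The plan is to derive the statement from two ingredients: a rigidity principle for parallelism-preserving maps, which will give injectivity of the evaluation map $F\mapsto F(p)$, and the fact that each orbit is a closed embedded submanifold of $N$, onto which the Lie group structure of $G$ can be transported. First I would replace the coframe $\{\eta^1,\dots,\eta^n\}$ by the dual global frame of vector fields $X_1,\dots,X_n$ on $N$: a diffeomorphism $F$ satisfies $F^\ast\eta^i=\eta^i$ for all $i$ if and only if $F_\ast X_i=X_i$ for all $i$, i.e.\ $F$ commutes with the local flow $\varphi^i_t$ of each $X_i$ wherever defined. Around any point $q$ one then has the flow-box chart $\psi_q(t^1,\dots,t^n)=\varphi^1_{t^1}\circ\cdots\circ\varphi^n_{t^n}(q)$, a diffeomorphism from a neighbourhood of $0\in\R^n$ onto a neighbourhood $U_q$ of $q$, and if $F\in G$ then $F\circ\psi_q=\psi_{F(q)}$ on the common domain; hence $F|_{U_q}=\psi_{F(q)}\circ\psi_q^{-1}$. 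This is the rigidity: a parallelism-preserving local diffeomorphism is uniquely determined by the image of a single point, and depends smoothly on that image.

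Using this I would prove that $\mathrm{ev}_p\colon F\in G\mapsto F(p)\in N$ is injective. If $F(p)=F'(p)$, set $H=F'^{-1}\circ F\in G$; then $H(p)=p$, and since $H_\ast X_i=X_i$ one has $dH_p=\mathrm{id}$ automatically. The set $S=\{x\in N:\ H(x)=x,\ dH_x=\mathrm{id}\}$ is closed by continuity, contains $p$, and is open, because if $x\in S$ then $H$ fixes $x$ and commutes with the flows $\varphi^i_t$, hence fixes every point $\psi_x(t)$, so $H=\mathrm{id}$ on $U_x$ and $U_x\subseteq S$. Since $N$ is connected (in our application $N=\mathrm Q(\mathbb M)$ is connected, $\mathbb M$ being connected), $S=N$ and $F=F'$.

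The heart of the matter is to show that the orbit $O_p:=\mathrm{ev}_p(G)$ is a closed embedded submanifold. As $G$ acts transitively on $O_p$ it suffices to produce a submanifold chart near $p$. By the rigidity, $O_p\cap U_p$ is contained in the common level set of the structure functions $c^k_{ij}$ defined by $[X_i,X_j]=\sum_k c^k_{ij}X_k$ and of all their iterated derivatives $X_{l_1}\cdots X_{l_s}c^k_{ij}$, all of which are $G$-invariant; conversely, for a point $q$ of this level set the only possible parallelism-preserving map sending $p$ to $q$ is the germ $\psi_q\circ\psi_p^{-1}$. Following Kobayashi, one shows that near $p$ this level set is in fact a closed embedded submanifold coinciding with $O_p\cap U_p$, that the candidate germ extends, by continuation along paths in the connected manifold $N$ — with trivial monodromy, again by local uniqueness — to a genuine element of $G$, and that $O_p$ is closed in $N$. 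Granting this, $\mathrm{ev}_p\colon G\to O_p$ is a bijection which we declare a diffeomorphism; the group operations and the evaluation action $G\times N\to N$ are then smooth, because in flow-box charts $F$ acts by $\psi_{F(p)}\circ\psi_p^{-1}$, depending smoothly on $F(p)\in O_p$. Finally $\dim G=\dim O_p\le\dim N=n$, since $O_p\subseteq N$.

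I expect the last step to be the real obstacle. That $O_p$ is an immersed $G$-orbit and $\mathrm{ev}_p$ a bijection is soft; the delicate part is the \emph{global} control — that the uniquely determined local germ genuinely extends to a diffeomorphism of all of $N$, and that the orbit does not accumulate on itself — which requires combining the flow-box rigidity with an equicontinuity and limiting argument. This is precisely the content of \cite[Chapter I, Theorem 3.2]{K1}, whose proof I would follow.
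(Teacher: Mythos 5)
The paper does not actually prove this statement: it is quoted verbatim as a classical theorem of Kobayashi with a pointer to \cite[Chapter I, Theorem 3.2]{K1}, so there is no in-paper argument to compare against. The steps you do spell out (flow-box rigidity, the open--closed argument giving injectivity of $F\mapsto F(p)$ on a connected $N$, and transport of the submanifold structure of the orbit back to $G$) are correct and follow exactly the strategy of the cited proof, and the one step you defer --- closedness and embeddedness of the orbit together with global extension of the local germs --- is precisely what that reference supplies, so your treatment is consistent with, and no less complete than, the paper's own.
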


Accordingly, taking into account the above discussion, we can state the following.

\begin{thm}\label{THM0}
Let $\mathbb{M}$ be an oriented, time-oriented conformal Lorentz manifold of dimension $n+1\ge 3$.
Let ${L}:\mathrm{G}\times \mathbb{M}\to \mathbb{M}$ be an effective left action of a connected
Lie group $\mathrm{G}$ on
$\mathbb{M}$ by restricted conformal transformations.
For any $\dot{\mathcal{A}}_*\in \mathrm Q(\mathbb{M})$, the mapping
\[
  \mathfrak{j}: \mathrm{G} \ni g \longmapsto \dot{{L}}_g(\dot{\mathcal{A}}_*)\in \mathrm Q(\mathbb{M})
  \]
is a 1-1 immersion of $\mathrm{G}$ into $\mathrm Q(\mathbb{M})$.
Here, for each $g\in \mathrm G$, $\dot{{L}}_g$ denotes the conformal prolongation of $L_g$ (cf. Remark
\ref{r:conf-prolong}).
In particular,
$\mathrm{dim}\,\mathrm{G}$ $\leq$ $\mathrm{dim}\, \mathrm Q(\mathbb{M})$ $=$ $\frac12(n+3)(n+2)$.
If $\mathrm{dim}\,\mathrm{G} = \frac12(n+3)(n+2)$, then:

\begin{enumerate}

\item  The mapping $\mathfrak{j}$ is a diffeomorphism and $\mathrm Q(\mathbb{M})$ inherits from $\mathrm{G}$ the structure of a Lie group with neutral element $\dot{\mathcal{A}}_*$.
    Moreover, $\mathrm G$ and $\mathrm Q(\mathbb{M})$ are isomorphic to $\mathcal{C}^{\uparrow}_+(\mathbb{M})$,
    the restricted conformal group of $\mathbb M$.

\item  $\mathrm Q(\mathbb{M})$ is locally isomorphic to $\mathrm{M}^{\uparrow}_+(2,n+1)$.

\item If $p_*=\pi_{\mathrm Q}(\dot{\mathcal{\mathcal{A}}}_*)$, then the fiber $\pi_{\mathrm{Q}}^{-1}(p_*)$ is a connected Lie subgroup isomorphic to $\mathrm{H}_+^{\uparrow}(2,n+1)$.

\item  $\pi_{\mathrm{Q}}^{-1}(p_*)$ is the stabilizer of the point $p_*$ for the action of $\mathrm Q(\mathbb{M})$ on $\mathbb{M}$.

\item  $\pi_{\mathrm{Q}}^{-1}(p_*)$ is the maximal integral submanifold through $\dot{\mathcal{A}}_*$ of the left-invariant, completely integrable Pfaffian differential system generated by the 1-forms $\phi^1_0,\dots,\phi^{n+1}_0$.

\end{enumerate}

\end{thm}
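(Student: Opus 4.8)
The plan is to deduce everything from Kobayashi's theorem applied to the absolute parallelism $\{\phi^0_0,\phi^i_0,\phi^0_i,\phi^i_j\}_{i<j}$ on $\mathrm{Q}(\mathbb{M})$ constructed in Section \ref{s:2:s:2}, together with the correspondence between restricted conformal transformations of $\mathbb{M}$ and automorphisms of $\phi$ recorded in Remark \ref{r:conf-prolong}(3). First I would observe that since $L$ is effective and $\mathrm{G}$ is connected, the assignment $g\mapsto \dot{L}_g$ is an injective homomorphism of $\mathrm{G}$ into $\mathrm{Aut}(\phi)$: injectivity uses that if $\dot{L}_g = \mathrm{id}$ then $L_g=\mathrm{id}$ by the uniqueness part of the prolongation correspondence, hence $g=e$ by effectiveness. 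Because each $\dot{L}_g$ preserves all of $\phi$ it in particular preserves the coframe $\{\eta^i\}:=\{\phi^0_0,\phi^i_0,\phi^0_i,\phi^i_j\}$, so $\dot{L}_{\mathrm{G}}$ is a subgroup of $G_{\mathrm{K}}:=\mathrm{Aut}(\{\eta^i\})$. Now fix $\dot{\mathcal{A}}_*\in\mathrm{Q}(\mathbb{M})$ and apply Kobayashi's theorem: the orbit map $\mathrm{ev}:F\mapsto F(\dot{\mathcal{A}}_*)$ is injective on $G_{\mathrm{K}}$ with closed submanifold image, and $\mathfrak{j}$ is the composition of $g\mapsto \dot{L}_g$ with $\mathrm{ev}$. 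Since $g\mapsto\dot{L}_g$ is an injective Lie group homomorphism into the Lie group $G_{\mathrm{K}}$ and $\mathrm{ev}$ is an injective immersion, $\mathfrak{j}$ is an injective immersion of $\mathrm{G}$; this gives the first assertion and, counting dimensions against $\dim\mathrm{Q}(\mathbb{M})=\dim\mathfrak{m}(2,n+1)=\tfrac12(n+3)(n+2)$ (the latter an elementary computation from the block form of $\mathfrak{m}(2,n+1)$), the inequality $\dim\mathrm{G}\le\tfrac12(n+3)(n+2)$.

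Assume now equality holds. Then $\mathfrak{j}$ is an injective immersion between manifolds of the same dimension, hence an open map, and since $\mathrm{G}$ is connected its image is an open subset of the connected manifold $\mathrm{Q}(\mathbb{M})$; it is also closed, being $\mathrm{ev}$ of the closed set $\dot{L}_{\mathrm{G}}$—here I would note that $\dot{L}_{\mathrm{G}}$ is closed in $G_{\mathrm{K}}$ because it has full dimension in $G_{\mathrm{K}}$ and is a subgroup, and $\mathrm{ev}$ is a closed embedding. Connectedness of $\mathrm{Q}(\mathbb{M})$ then forces $\mathfrak{j}$ to be a diffeomorphism, proving (1)'s first half. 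Transporting the group structure of $\mathrm{G}$ through $\mathfrak{j}$ makes $\mathrm{Q}(\mathbb{M})$ a Lie group with identity $\dot{\mathcal{A}}_*$; moreover equality of dimensions together with $\dot{L}_{\mathrm{G}}\subseteq G_{\mathrm{K}}\subseteq\mathrm{Aut}(\phi)=\dot{\mathcal{C}}^{\uparrow}_+(\mathbb{M})$ (using Remark \ref{r:conf-prolong}(3)) and $\dim\dot{\mathcal{C}}^{\uparrow}_+(\mathbb{M})\le\dim\mathrm{Q}(\mathbb{M})=\dim\mathrm{G}$ squeezes all these groups to have the same dimension; since $\mathcal{C}^{\uparrow}_+(\mathbb{M})\cong\mathrm{Aut}(\phi)$ and $\dot{L}_{\mathrm{G}}$ is an open (hence, being a subgroup, also closed) subgroup of $\mathrm{Aut}(\phi)$, and $\mathcal{C}^{\uparrow}_+(\mathbb{M})$ contains the connected image of $\mathrm{G}$, one identifies $\mathrm{G}\cong\mathrm{Q}(\mathbb{M})\cong\mathcal{C}^{\uparrow}_+(\mathbb{M})$ as Lie groups. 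This completes (1). For (2): under the diffeomorphism $\mathfrak{j}$ the normal connection $\phi$ becomes a left-invariant $\mathfrak{m}(2,n+1)$-valued 1-form on $\mathrm{G}$ whose value at $e$ is a linear isomorphism $T_e\mathrm{G}\to\mathfrak{m}(2,n+1)$ (by the absolute-parallelism property, Remark \ref{r:conf-prolong}(1)); the structure equations of $\phi$, which in the maximal-dimension case reduce to the Maurer–Cartan equations $d\phi+\tfrac12[\phi,\phi]=0$ because the curvature of the normal connection vanishes (the Weyl and Cotton tensors vanish, as the manifold is then conformally flat—this is the one place genuine input beyond formal manipulation is needed, and it should be extracted from the structure equations by the standard argument that a transitive symmetry forces constant, hence zero, conformal curvature), show that $\phi$ is a Maurer–Cartan form, so $\mathrm{G}$ is locally isomorphic to any Lie group with Lie algebra $\mathfrak{m}(2,n+1)$, in particular to $\mathrm{M}^{\uparrow}_+(2,n+1)$.

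Parts (3)–(5) are then read off the bundle structure. Let $p_*=\pi_{\mathrm{Q}}(\dot{\mathcal{A}}_*)$. The fiber $\pi_{\mathrm{Q}}^{-1}(p_*)$ is, by construction of $\mathrm{Q}(\mathbb{M})$, an orbit of the structure group $\mathrm{H}^{\uparrow}_+(2,n+1)$ acting freely on the right, hence diffeomorphic to $\mathrm{H}^{\uparrow}_+(2,n+1)$, which is connected; I would check it is a Lie subgroup of $\mathrm{Q}(\mathbb{M})$ (with its new group structure) by identifying it with the stabilizer of $p_*$ under the induced action of $\mathrm{Q}(\mathbb{M})\cong\mathrm{G}$ on $\mathbb{M}$—indeed $\mathfrak{j}(g)=\dot{L}_g(\dot{\mathcal{A}}_*)$ lies in $\pi_{\mathrm{Q}}^{-1}(p_*)$ iff $L_g$ fixes $p_*$, since $\pi_{\mathrm{Q}}\circ\dot{L}_g = L_g\circ\pi_{\mathrm{Q}}$; this gives both (3) and (4) at once, the subgroup being closed as the stabilizer of a point of a smooth action. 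For (5): the fiber of $\pi_{\mathrm{Q}}$ is exactly the set of points reachable from $\dot{\mathcal{A}}_*$ along curves tangent to $\ker(d\pi_{\mathrm{Q}})$; by Remark \ref{r:conf-prolong}(1)–(2) the semibasic forms $\phi^1_0,\dots,\phi^{n+1}_0$ vanish precisely on the vertical distribution, which is the tangent distribution of the fibers and hence integrable with the fibers as maximal leaves, and left-invariance of this Pfaffian system follows since each $\dot{L}_g$ preserves $\phi$ and hence each $\phi^i_0$. The main obstacle I anticipate is \textbf{not} any single hard lemma but keeping the bookkeeping honest around equality of dimensions: one must be careful to argue that $\dot{L}_{\mathrm{G}}$, $\mathrm{Aut}(\phi)$, and $\mathcal{C}^{\uparrow}_+(\mathbb{M})$ literally coincide (not just share a dimension) before transporting the group structure, and to invoke conformal flatness cleanly for part (2) rather than hand-waving it.
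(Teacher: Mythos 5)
Your proposal is correct and follows essentially the same route as the paper, which states Theorem \ref{THM0} without a separate proof, deriving it directly from Kobayashi's theorem applied to the normal parallelism of $\mathrm{Q}(\mathbb{M})$ together with the prolongation correspondence of Remark \ref{r:conf-prolong}. The one spot worth tightening is the identification $\dot{L}_{\mathrm{G}}=\mathrm{Aut}(\phi)$ in part (1): the open-and-closed-subgroup argument only shows $\dot{L}_{\mathrm{G}}$ is a union of components of $\mathrm{Aut}(\phi)$, but since the evaluation map is injective on all of $\mathrm{Aut}(\phi)\subseteq G_{\mathrm{K}}$ and $\mathrm{ev}(\dot{L}_{\mathrm{G}})$ already exhausts $\mathrm{Q}(\mathbb{M})$, equality follows at once.
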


\begin{ex}\label{cpt-EU}

Let $\mathcal{N}^+_1(\R^{2,n+1})$ and $\mathcal{N}_1(\R^{2,n+1})$ denote, respectively,
the manifolds of all oriented and unoriented isotropic lines
through the origin in $\R^{2,n+1}$.
%
%
For a nonzero isotropic vector
$x\in \R^{2,n+1}$, we let $|[x]|$ and $[x]$
denote, respectively, the oriented and the unoriented line spanned by
$x$.
{Consider the smooth maps}
\[
\begin{cases}
 \E \ni (\tau,\mathrm{y}) \longmapsto |[(\rho_1(\tau),\mathrm{y})]|\in \mathcal{N}^+_1(\R^{2,n+1}),\\
 \E \ni (\tau,\mathrm{y}) \longmapsto [(\rho_1(\tau),\mathrm{y})]\in \mathcal{N}_1(\R^{2,n+1}),
 \end{cases}
   \]
{where $\rho_1(\tau) = {}^t\!(\cos \tau, \sin \tau)$}.
The first map is invariant under the action of $\mathcal{T}_{2\pi}$, while the second is invariant under
the action of $\mathcal{T}'_{\pi}$. They induce smooth diffeomorphisms
\[
 \widehat{\eta}_{\mathrm{I}}:\EC \to \mathcal{N}^+_1(\R^{2,n+1}),\quad
  \widehat{\eta}_{\mathrm{II}}:\ECC \to \mathcal{N}_1(\R^{2,n+1}).
  \]
{Since for $n$ even, $\ECC$ is not orientable, we will only consider $\ECC$ when $n$ is odd.}
With the above identifications, we get smooth left actions of $\O(2,n+1)$
and $\mathrm{P}\O(2,n+1):=\O(2,n+1)/\{\pm I\}$ on the standard compact forms. These actions are conformal
and preserve orientation and time-orientation. For dimensional reasons,
$\O(2,n+1)$ and $\mathrm{P}\O(2,n+1)$ are isomorphic to the restricted conformal groups of $\EC$ and $\ECC$, respectively.
Notice that the Maurer--Cartan forms of the groups coincide with the normal conformal connection forms.
\end{ex}

\section{The canonical covering of $\O(2,n+1)$}\label{s:3}

In this section we will build a nontrivial central extension $\OO(2,n+1)$ of the pseudo-orthogonal
group $\O(2,n+1)$ and will describe its center $\widehat{\mathrm{Z}}(2,n+1)$.


\subsection{Transitive action of $\O(2,n+1)$ on the classical domain of type IV}



\begin{defn}
Let $\Omega_{\mathrm{IV}}$
denote the set of $(n+1)\times 2$ real matrices defined by
\[
\Omega_{\mathrm{IV}} := \left\{ \beta \in \R({n+1,2})
\mid  I_2 - {}^t\!\beta\beta >0,
   \,\,\text{i.e., $I_2 -{}^t\!\beta\beta$ is positive definite} \right\}.
   \]
 \end{defn}
%
%
%
The set $\Omega_{\mathrm{IV}}$ can be thought of as the open domain of $\R(n+1,2)$ consisting of all
$(n+1)\times 2$ matrices $\beta = (\mathrm{u}\,\mathrm{v})$ whose column vectors $\mathrm{u},
\mathrm{v} \in \R^{n+1}$ satisfy\footnote{For $\mathrm{u},
\mathrm{v} \in \R^{n+1}$, $\mathrm{u}\cdot\mathrm{v}$ denotes the usual dot product
and $\|\mathrm{u}\|$ the corresponding norm.}
\[
 \|\mathrm{u}\|<1, \quad  \|\mathrm{v}\|<1, \quad
 \mu(\mathrm{u},\mathrm{v}):=\|\mathrm{u}\|^2+\|\mathrm{v}\|^2
  +(\mathrm{u}\cdot \mathrm{v})^2-\|\mathrm{u}\|^2\|\mathrm{v}\|^2 <1,
  \]
that is,
\[
  \Omega_{\mathrm{IV}}=\left\{\beta = (\mathrm{u} \,\mathrm{v}) \in \mathbb \R({n+1,2})
   \mid \|\mathrm{u}\|^2<1,\, \|\mathrm{v}\|^2<1,\, \mu(\mathrm{u},\mathrm{v})<1\right\}.
    \]

\begin{remark}
Let $\mathfrak D$ denote the bounded domain of $\mathbb C^{n+1}$ defined by
\[
 \mathfrak D := \left\{ z \in \mathbb C^{n+1} \mid  2\,{}^t\!z \bar z< 1 + |{}^t\!zz|^2   < 2 \right\}.
   \]
The domain $\mathfrak D$ is known in the literature as the (complex) {\it Lie ball}.
If $n = 0$, it is the unit disk, $\mathbb D$, and if $n = 1$, $\mathfrak D\cong \mathbb D\times \mathbb D$.
According to Hua \cite[\S13]{Hua}, the Lie ball $\mathfrak D$ can be identified
with the matrix domain $\Omega_{\mathrm{IV}}$.
In fact,
the mapping $H : \mathfrak D  \to \Omega_{\mathrm{IV}}$, defined by
\[
H(z) : = 2\begin{pmatrix}
z & \bar z
\end{pmatrix}
\begin{pmatrix}
{}^t\!z z +1 & \overline{{}^t\!z z} +1\\
i({}^t\!z z -1) & -i(\overline{{}^t\!z z} -1)
\end{pmatrix}^{-1}
\]
is a diffeomorphism of $\mathfrak D$ onto $\Omega_{\mathrm{IV}}$.
%
Actually, if one identifies the matrix $\beta = (\mathrm u\,  \mathrm v)$ with the vector
$\mathrm u + i\mathrm v \in \mathbb C^{n+1}$,
in order to obtain an almost complex structure on $\Omega_{\mathrm{IV}}$, the map $H$ becomes a
holomorphic diffeomorphism.
The domain $\mathfrak D$ ($n\geq 2$) is a classical irreducible
bounded symmetric domain of type IV
(cf. \cite{BO, Ca1, Hel, Hua, HuaBook} for more details).
\end{remark}

\begin{notation}
Let $\mathrm{\mathbf{X}}\in \O(2,n+1)$ be written in block form as
\[
 \mathbf{X}=\begin{pmatrix}
                 \mathfrak{a}(\mathrm{\mathbf{X}}) & \mathfrak{b}(\mathrm{\mathbf{X}}) \\
                 \mathfrak{c}(\mathrm{\mathbf{X}}) & \mathfrak{d}(\mathrm{\mathbf{X}}) \\
\end{pmatrix},
   \]
where $\mathfrak{a}(\mathrm{\mathbf{X}})\in \R(2,2)$, $\mathfrak{b}(\mathrm{\mathbf{X}})\in \R(2,n+1)$, $\mathfrak{c}(\mathrm{\mathbf{X}})\in \R(n+1,2)$, $\mathfrak{d}(\mathrm{\mathbf{X}})$ $\in$ $\R(n+1,n+1)$
and
\[
\begin{pmatrix}
\mathfrak{a}(\mathrm{\mathbf{X}}) & \mathfrak{b}(\mathrm{\mathbf{X}}) \\
\mathfrak{c}(\mathrm{\mathbf{X}}) & \mathfrak{d}(\mathrm{\mathbf{X}}) \\
\end{pmatrix}
\begin{pmatrix}
I_2 & 0 \\
0 & -I_{n+1} \\
\end{pmatrix}
\begin{pmatrix}
{}^t\!\mathfrak{a}(\mathrm{\mathbf{X}}) & {}^t\!\mathfrak{c}(\mathrm{\mathbf{X}}) \\
{}^t\!\mathfrak{b}(\mathrm{\mathbf{X}}) & {}^t\!\mathfrak{d}(\mathrm{\mathbf{X}}) \\
\end{pmatrix} =
\begin{pmatrix}
I_2 & 0 \\
0 & -I_{n+1} \\
\end{pmatrix},
\]
that is,
\[
\begin{cases}
{}^t\!\mathfrak{a}(\mathrm{\mathbf{X}})\mathfrak{a}(\mathrm{\mathbf{X}})=I_{2} +
{}^t\!\mathfrak{c}(\mathrm{\mathbf{X}}) \mathfrak{c}(\mathrm{\mathbf{X}}),\\
{}^t\!\mathfrak{d}(\mathrm{\mathbf{X}})\mathfrak{d}(\mathrm{\mathbf{X}})=I_{n+1}+
{}^t\!\mathfrak{b}(\mathrm{\mathbf{X}}) \mathfrak{b}(\mathrm{\mathbf{X}}),\\
{}^t\!\mathfrak{a}(\mathrm{\mathbf{X}}) \mathfrak{b}(\mathrm{\mathbf{X}})- {}^t\!\mathfrak{c}(\mathrm{\mathbf{X}})
\mathfrak{d}(\mathrm{\mathbf{X}}) = 0.
 \end{cases}
  \]
Since $\mathbf X$ preserves time and space-orientation, the {\it timelike part} $\mathfrak{a}(\mathbf X)$ is invertible
with $\text{det}\, \mathfrak{a}(\mathbf X) >0$. Similarly, the {\it spacelike part}
$\mathfrak{d}(\mathbf X)$ is invertible
 and $\text{det} \,\mathfrak{d}(\mathbf X) >0$.
\end{notation}

With the above notation, it is easy to show that the group $\O(2,n+1)$ acts transitively
on $\Omega_{\mathrm{IV}}$ by
\[
 L_{\mathrm{\mathbf{X}}} (\beta) := \left(\mathfrak{d}(\mathrm{\mathbf{X}}) \beta
  +\mathfrak{c}(\mathrm{\mathbf{X}})\right) \left(\mathfrak{b}(\mathrm{\mathbf{X}})\beta
    + \mathfrak{a}(\mathrm{\mathbf{X}})\right)^{-1}.
    \]
For a proof of this fact we refer to \cite[\S12]{Hua}.
The isotropy group at the origin $\mathrm{O}_{\mathrm{IV}} := 0_{(n+1)\times 2} \in \Omega_{\mathrm{IV}}$ is
\[
 \mathrm{SO}(2)\times \mathrm{SO}(n+1)\cong \left\{\mathrm{\mathbf{S}}(\mathrm{r},\mathrm{{R}})=
                                                 \begin{pmatrix}
                                                   \mathrm{r} & 0 \\
                                                   0 & \mathrm{{R}} \\
                                                 \end{pmatrix}  \mid \mathrm{r}\in \mathrm{SO}(2),\,
                                               \mathrm{{R}}\in \mathrm{SO}(n+1)\right\}.
   \]
This gives a coset expression for $\Omega_{\mathrm{IV}}$,
\[
 \Omega_{\mathrm{IV}} =\O(2,n+1)/\mathrm{SO}(2)\times \mathrm{SO}(n+1).
  \]
The canonical projection of $\O(2,n+1)$ onto $\Omega_{\mathrm{IV}}$,
\[
\pi_2^- : \O(2,n+1)\to  \Omega_{\mathrm{IV}}, \,
  \mathbf X \longmapsto L_{\mathrm{\mathbf{X}}}(\mathrm{O}_{\mathrm{IV}}) =
 \mathfrak{c}(\mathrm{\mathbf{X}})  \mathfrak{a}(\mathrm{\mathbf{X}})^{-1},
 \]
makes $\O(2,n+1)$ into a principal bundle
over $\Omega_{\mathrm{IV}}$ with group $\mathrm{SO}(2)\times \mathrm{SO}(n+1)$.


\vskip0.2cm
The following is a well-known property of
classical domains \cite{Hel, HuaBook}.
For future use, we state it explicitly for $\Omega_{\mathrm{IV}}$.

\begin{lemma}
 The domain $\Omega_{\mathrm{IV}}$ is star-shaped with respect to the origin
  $\mathrm{O}_{\mathrm{IV}}=0_{(n+1)\times 2}$. In particular, $\Omega_{\mathrm{IV}}$ is contractible.
  \end{lemma}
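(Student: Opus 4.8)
The plan is to show directly that the straight-line homotopy contracts $\Omega_{\mathrm{IV}}$ to the origin, which will establish both star-shapedness and, a fortiori, contractibility. First I would recall the characterization
\[
\Omega_{\mathrm{IV}} = \left\{\beta = (\mathrm{u}\,\mathrm{v}) \in \R({n+1,2}) \mid \|\mathrm{u}\|^2<1,\ \|\mathrm{v}\|^2<1,\ \mu(\mathrm{u},\mathrm{v})<1\right\}
\]
derived above, and observe that it suffices to prove that if $\beta \in \Omega_{\mathrm{IV}}$ and $0 \le t \le 1$, then $t\beta \in \Omega_{\mathrm{IV}}$; once this is known, the map $H\colon \Omega_{\mathrm{IV}} \times [0,1] \to \Omega_{\mathrm{IV}}$, $H(\beta,t) = t\beta$, is a continuous deformation retraction onto $\{\mathrm{O}_{\mathrm{IV}}\}$, giving contractibility.

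For the key computation I would work with the defining inequality in the more symmetric form $I_2 - {}^t\!\beta\beta > 0$. For $t \in [0,1]$ one has ${}^t\!(t\beta)(t\beta) = t^2\,{}^t\!\beta\beta$, so
\[
I_2 - {}^t\!(t\beta)(t\beta) = I_2 - t^2\,{}^t\!\beta\beta = (1-t^2)I_2 + t^2\bigl(I_2 - {}^t\!\beta\beta\bigr).
\]
Since ${}^t\!\beta\beta$ is positive semidefinite, $I_2 - t^2\,{}^t\!\beta\beta \ge I_2 - {}^t\!\beta\beta$ whenever $t^2 \le 1$; as the right-hand side is positive definite by hypothesis, so is the left-hand side. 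Hence $t\beta \in \Omega_{\mathrm{IV}}$ for all $t \in [0,1]$, which is exactly the star-shapedness assertion. Alternatively, one can argue directly with the three scalar conditions: scaling $\beta$ by $t$ replaces $\|\mathrm{u}\|^2, \|\mathrm{v}\|^2$ by $t^2\|\mathrm{u}\|^2, t^2\|\mathrm{v}\|^2$ and $\mu(\mathrm{u},\mathrm{v})$ by a quantity bounded above by $t^2\mu(\mathrm{u},\mathrm{v})$ (the quartic term $\|\mathrm{u}\|^2\|\mathrm{v}\|^2$ picks up a factor $t^4 \le t^2$ but enters with a minus sign, so the estimate goes the right way), and each of these stays below $1$.

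There is essentially no serious obstacle here; the statement is elementary linear algebra. The only mild point requiring care is the monotonicity step: one must invoke that for symmetric matrices $A \ge B \ge 0$ and scalars $0 \le s \le 1$ one has $sA + (1-s)C \ge$ (something positive definite), i.e. that a convex combination of a positive-definite matrix with a positive-semidefinite one is positive definite — which follows immediately from $\langle x, (sA+(1-s)C)x\rangle \ge s\langle x, Ax\rangle > 0$ for $x \ne 0$ when $A > 0$ and $C \ge 0$. With that observation the proof is a two-line verification.
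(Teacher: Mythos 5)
Your main argument is correct, and it is worth noting that the paper itself gives no proof at all: it states the lemma as ``a well-known property of classical domains'' with references to Helgason and Hua, and the accompanying remark derives contractibility from the general theory of Riemannian symmetric spaces of noncompact type (nonpositive curvature, hence diffeomorphic to $\R^{2(n+1)}$). Your direct verification via
\[
I_2 - t^2\,{}^t\!\beta\beta = (1-t^2)I_2 + t^2\bigl(I_2 - {}^t\!\beta\beta\bigr) \ \ge\ I_2 - {}^t\!\beta\beta \ > \ 0
\]
is more elementary and self-contained, and is exactly the kind of two-line computation the authors presumably had in mind when calling the fact well known; it buys independence from the symmetric-space machinery at essentially no cost. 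Two small caveats. First, your auxiliary statement that ``a convex combination of a positive-definite matrix with a positive-semidefinite one is positive definite'' fails when the weight on the positive-definite factor is zero; this does not hurt you here because at $t=0$ the matrix is $I_2$ and because your first (monotonicity) argument already covers all $t\in[0,1]$, but the lemma as you phrased it is not literally true. Second, and more substantively, the ``alternative'' scalar argument is wrong as stated: since $(\mathrm{u}\cdot\mathrm{v})^2 - \|\mathrm{u}\|^2\|\mathrm{v}\|^2 \le 0$ by Cauchy--Schwarz and $t^4 \le t^2$, one actually has $\mu(t\mathrm{u},t\mathrm{v}) = t^2(\|\mathrm{u}\|^2+\|\mathrm{v}\|^2) + t^4\bigl((\mathrm{u}\cdot\mathrm{v})^2 - \|\mathrm{u}\|^2\|\mathrm{v}\|^2\bigr) \ \ge\ t^2\mu(\mathrm{u},\mathrm{v})$, the opposite of the bound you claim (e.g.\ for $\mathrm{u}\perp\mathrm{v}$ the difference is $(t^2-t^4)\|\mathrm{u}\|^2\|\mathrm{v}\|^2\ge 0$). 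The conclusion $\mu(t\mathrm{u},t\mathrm{v})<1$ is still true, but proving it by hand requires using $\|\mathrm{u}\|^2+\|\mathrm{v}\|^2<2$ and a short optimization; if you keep the alternative route, fix or delete that sentence. The matrix argument alone is a complete and correct proof.
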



\begin{remark}
The above property of $\Omega_{\mathrm{IV}}$ also follows from the general approach to irreducible
bounded symmetric domains
based on the theory of Riemannian symmetric spaces.
In fact, the domain $\Omega_{\mathrm{IV}}$ may be given the structure of a Riemannian
symmetric space of noncompact type. As such, it has non-positive sectional curvature
and negative definite Ricci tensor,
which implies that
 $\Omega_{\mathrm{IV}}$ is simply connected, and hence diffeomorphic to Euclidean space $\R^{2(n+1)}$
(cf. \cite{Hel, ON}).
\end{remark}


\begin{defn}
Let $\NG2$ denote the Grassmannian of
{\it negative} 2-planes, i.e.,
%
the Grassmannian of 2-dimensional subspaces $\V$ of $\R^{2,n+1}$ on which the scalar product $\langle \,, \rangle$
is negative definite.
 \end{defn}

The matrix domain $\Omega_{\mathrm{IV}}$ can be identified with
$\NG2$ by the
 mapping
\begin{equation}\label{ident-R-NG2}
 \mathbf j : \Omega_{\mathrm{IV}} \to \NG2, \quad \mathbf j(\beta) :=
  \text{span}\left\{\mathbf{j}_1(\beta),\mathbf{j}_2(\beta) \right\},
%
  \end{equation}
where, for each $\beta=(\mathrm{u}\,\mathrm{v})\in \Omega_{\mathrm{IV}}$,
$\mathbf{j}_1(\beta)={}^t\!(1, 0 , {}^t\!\mathrm u)$
and $\mathbf{j}_2(\beta)= {}^t\!( 0, 1, {}^t\!\mathrm{v})$.
In fact, we have the following.
%

%
%

\begin{lemma}
The mapping $\mathbf{j}: \Omega_{\mathrm{IV}}\to \NG2$
is a smooth diffeomorphism.
\end{lemma}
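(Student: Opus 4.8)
The plan is to produce a smooth inverse to $\mathbf j$ directly, by writing down a canonical basis of any negative $2$-plane $\V\in\NG2$ adapted to the block decomposition $\R^{2,n+1}=\R^2\oplus\R^{n+1}$. First I would check that $\mathbf j$ is well-defined: for $\beta=(\mathrm u\,\mathrm v)\in\Omega_{\mathrm{IV}}$ the Gram matrix of $\mathbf j_1(\beta),\mathbf j_2(\beta)$ with respect to $\langle\,,\,\rangle$ (which in the coordinates of \eqref{s:1:s:0:f1} with $(p,q)=(2,n+1)$ reads $\mathrm{diag}(-1,-1,1,\dots,1)$) is exactly $-(I_2-{}^t\!\beta\beta)$, which is negative definite precisely by the defining condition of $\Omega_{\mathrm{IV}}$; hence $\mathbf j(\beta)$ is a genuine negative $2$-plane, and $\mathbf j_1,\mathbf j_2$ are linearly independent, so $\mathbf j$ is smooth into $\NG2$.

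For surjectivity and the construction of the inverse, let $\V\in\NG2$ and consider the projection $\varpi:\R^{2,n+1}\to\R^2$ onto the first two coordinates. The key claim is that $\varpi|_\V:\V\to\R^2$ is a linear isomorphism. Indeed, if $0\neq w\in\V$ lies in $\ker(\varpi|_\V)$, then $w=(0,0,\mathrm z)$ with $\mathrm z\in\R^{n+1}$, $\mathrm z\neq 0$, so $\langle w,w\rangle=\|\mathrm z\|^2>0$, contradicting that $\langle\,,\,\rangle|_\V$ is negative definite; hence $\varpi|_\V$ is injective, and by dimension count it is an isomorphism. Therefore there is a unique basis $(b_1,b_2)$ of $\V$ with $\varpi(b_1)=e_0$, $\varpi(b_2)=e_1$, necessarily of the form $b_1={}^t\!(1,0,{}^t\!\mathrm u)$, $b_2={}^t\!(0,1,{}^t\!\mathrm v)$ for uniquely determined $\mathrm u,\mathrm v\in\R^{n+1}$. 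Negative definiteness of the Gram matrix of $(b_1,b_2)$, which as above equals $-(I_2-{}^t\!\beta\beta)$ with $\beta=(\mathrm u\,\mathrm v)$, forces $I_2-{}^t\!\beta\beta>0$, i.e.\ $\beta\in\Omega_{\mathrm{IV}}$. This defines a map $\NG2\to\Omega_{\mathrm{IV}}$, $\V\mapsto\beta$, which is manifestly a two-sided inverse of $\mathbf j$.

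It remains to see that this inverse is smooth. I would argue locally: fix $\V_0\in\NG2$, pick any smooth local frame $(w_1,w_2)$ of the tautological bundle over a neighborhood of $\V_0$ in $\NG2$, and form the $2\times 2$ matrix $M(\V)$ whose columns are $\varpi(w_1),\varpi(w_2)$. Since $\varpi|_{\V_0}$ is an isomorphism, $M$ is invertible near $\V_0$, and the adapted frame $(b_1,b_2)$ is obtained from $(w_1,w_2)$ by right multiplication by $M(\V)^{-1}$, whose entries depend smoothly (indeed rationally) on $\V$; reading off the last $n+1$ coordinates of $b_1,b_2$ then gives $\mathrm u,\mathrm v$ smoothly in $\V$. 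Hence $\mathbf j^{-1}$ is smooth and $\mathbf j$ is a diffeomorphism. The only mild subtlety—and the step I would be most careful about—is the bookkeeping between the canonical coordinates of \eqref{s:1:s:0:f1} and the vectors $\mathbf j_1(\beta),\mathbf j_2(\beta)$ as written, i.e.\ verifying that the Gram matrix is $-(I_2-{}^t\!\beta\beta)$ with the correct sign; everything else is routine linear algebra and the smoothness of matrix inversion.
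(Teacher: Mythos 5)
Your proof is correct, and on the decisive step it takes a genuinely cleaner route than the paper. Both arguments ultimately rest on the same fact — that for any $\V\in\NG2$ the projection of $\V$ onto the first two (timelike) coordinates is a linear isomorphism onto $\R^2$, so that $\V$ admits a unique basis of the form ${}^t\!(1,0,{}^t\mathrm u)$, ${}^t\!(0,1,{}^t\mathrm v)$ — but you prove this fact in one line: a nonzero vector in the kernel of the projection has the form ${}^t\!(0,0,{}^t\mathrm z)$ and is therefore spacelike, contradicting negative definiteness of $\langle\,,\rangle|_{\V}$. The paper instead fixes an orthogonal basis $(\mathrm v_1,\mathrm v_2)$ of $\V$ normalized to $\langle \mathrm v_i,\mathrm v_i\rangle=-1$, writes ${}^t\mathrm v_i=({}^t\mathrm x_i,{}^t\mathrm y_i)$, and rules out $\mathrm x_2=t\mathrm x_1$ by a computation involving $\|\mathrm y_1\|\|\mathrm y_2\|\cos\theta$, then sets $\beta=(\mathrm y_1\,\mathrm y_2)(\mathrm x_1\,\mathrm x_2)^{-1}$; your kernel argument subsumes that entire computation, since an injective linear map sends any basis to a linearly independent pair. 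The two proofs also close differently: the paper asserts that $\mathbf j$ is an injective immersion of maximal rank and concludes once surjectivity is known (a bijective local diffeomorphism is a diffeomorphism), whereas you exhibit the inverse $\V\mapsto\beta$ explicitly and check its smoothness via local frames and matrix inversion — slightly more work, but it makes the inverse map concrete and independently verifies that the Gram matrix of $\bigl(\mathbf j_1(\beta),\mathbf j_2(\beta)\bigr)$ equals $-(I_2-{}^t\!\beta\beta)$, which is exactly the sign bookkeeping you flagged and which indeed comes out right with $G=\mathrm{diag}(-1,-1,1,\dots,1)$ for $(p,q)=(2,n+1)$. Either closing argument is fine; your treatment of the key linear-independence step is the one I would keep.
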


\begin{proof}
By construction, $\mathbf{j}$ is injective, differentiable and of maximal rank. Thus it suffices to show that
$\mathbf{j}$ is surjective. Let $\mathbb{V}\in \NG2$ be a negative 2-plane and let
$(\mathrm{v}_1,\mathrm{v}_2)$ be an orthogonal basis of $\mathbb{V}$.
Let us write ${}^t\!\mathrm{v}_1 =({}^t\!\mathrm{x}, {}^t\!\mathrm{y})$ and
${}^t\!\mathrm{v}_2= ({}^t\!\mathrm{x}', {}^t\!\mathrm{y}')$,
where $\mathrm{x}$, $\mathrm{x}'\in \R^2$
and $\mathrm{y}$, $\mathrm{y}'\in \R^{n+1}$.
The vectors $\mathrm{x},\mathrm{x}'$ and $\mathrm{y},\mathrm{y}'$ satisfy the identities
\begin{equation}\label{cnp}
-\|\mathrm{x}\|^2+\|\mathrm{y}\|^2=-\|\mathrm{x}'\|^2+
  \|\mathrm{y}'\|^2=-1,\quad -\mathrm{x}\cdot \mathrm{x}'+\mathrm{y}\cdot \mathrm{y}'=0.
   \end{equation}
First, observe that $\mathrm{x}$ and $\mathrm{x}'$ must be different from zero. In fact,
if $\mathrm{x}$ or $\mathrm{x}'$ were zero, the first equation in \eqref{cnp}
would imply $0\le \|\mathrm{y}\|=-1$ or $0\le \|\mathrm{y}'\|=-1$, which is a contradiction.
Next, we claim that $\mathrm{x}$ and $\mathrm{x}'$ are linearly independent.
%
%
%
Seeking a contradiction, suppose that $\mathrm{x}'=t\mathrm{x}$, for a nonzero real number $t$.
If we write
 $\mathrm{y}'\cdot\mathrm{y}=\|\mathrm{y}'\| \|\mathrm{y}\|\cos \theta $, where $\theta\in [0,\pi]$,
the identities (\ref{cnp}) can be rewritten as
\begin{equation}\label{cnpbis}
  \|\mathrm{y}\|^2=\|\mathrm{x}\|^2-1,\quad \|\mathrm{y}'\|^2=t^2\|\mathrm{x}\|^2-1,\quad
   t\|\mathrm{x}\|^2=\|\mathrm{y}\|\|\mathrm{y}'\|\cos \theta,
    \end{equation}
which implies
\[
  \begin{split}
  0&=t^2\|\mathrm{x}\|^4-\|\mathrm{y}\|^2\|\mathrm{y}'\|^2 \cos^2\theta\\
&=t^2\|\mathrm{x}\|^4 - \left(\|\mathrm{x}\|^2-1\right)\left(t^2\|\mathrm{x}\|^2-1\right)\cos^2\theta\\
&=t^2\|\mathrm{x}\|^4 \left(1-\cos^2\theta\right) + \left(\|\mathrm{x}\|^2(1+t^2)-1\right)\cos^2\theta.
   \end{split}
    \]
By the first condition in \eqref{cnpbis}, it follows that $\|\mathrm{x}\|> 1$.
Taking this into account,
the previous equation implies $\|\mathrm{x}\|^2(1+t^2)-1=0$, which is the desired contradiction.
%
%
Now, since $\mathrm{x}$ and $\mathrm{x}'$ are linearly independent, the $2\times 2$ matrix $(\mathrm{x}\, \mathrm{x}')$
is invertible and
$\beta = (\mathrm{y}\, \mathrm{y}')(\mathrm{x} \,\mathrm{x}')^{-1}$ is an element of $\Omega_{\mathrm{IV}}$
such that $\mathbf{j}(\beta)=\mathbb{V}$.
\end{proof}

\subsection{Construction of the canonical covering}

For each $\beta = (\mathrm{u}\,\mathrm{v})\in \Omega_{\mathrm{IV}}$, where $\mathrm u={}^t\!(u^1,\dots,u^{n+1})$,
$\mathrm v={}^t\!(v^1,\dots,v^{n+1})$, we let\footnote{Here $\delta_{j}^{i}$ denotes the Kronecker symbol.}
\[
  \mathrm{B}_j(\beta):={}^t\!(u_j,v_j,\delta_{j}^{1},\dots,\delta_{j}^{n+1}),\quad j=1,\dots,n+1,
     \]
and, as above,
\[
  \mathbf{j}_1(\beta) = {}^t\!(1,0,u^1,\dots,u^{n+1}),\quad
  \mathbf{j}_2(\beta) = {}^t\!(0,1,v^1,\dots,v^{n+1}).
  \]
Then
\[
 \mathrm{B}(\beta) =\left(\mathbf{j}_1(\beta),\mathbf{j}_2(\beta),
  \mathrm{B}_1(\beta),\dots,\mathrm{B}_{n+1}(\beta) \right)
   \]
is a
positive basis of $\R^{2,n+1}$, such that

\begin{itemize}
\item $ \mathbf{j}_1(\beta)$, $\mathbf{j}_2(\beta)$ span the negative 2-space $\mathbf{j}(\beta)$;

\item $\mathrm{B}_1(\beta)$, $\dots$, $\mathrm{B}_{n+1}(\beta)$ span the positive
$(n+1)$-space $\mathbf{j}(\beta)^{\perp}$
of $\R^{2,n+1}$.
\end{itemize}

Consequently, by the Gram--Schmidt process,
there is a unique smooth map
$\mathfrak{T}:\Omega_{\mathrm{IV}}\to \mathrm{T}^+(n+3)$
into the group of upper triangular $(n+3)\times (n+3)$ matrices
with positive entries on the main diagonal
such that,
for each $\beta \in \Omega_{\mathrm{IV}}$, $\mathrm{P}(\beta):=\mathrm{B}(\beta)\mathfrak{T}(\beta)$ belongs to
$\O(2,n+1)$.
The map
\[
  \mathrm{P}: \Omega_{\mathrm{IV}}\ni \beta
\longmapsto
\mathrm{P}(\beta)\in \O(2,n+1)
   \]
is a smooth global cross section of $\pi^-_2:\O(2,n+1)\to \Omega_{\mathrm{IV}}$. Let
\[
 \begin{split}\widehat{\mathfrak{a}}&: \Omega_{\mathrm{IV}}\to \mathrm{GL}_+(2,\R),\quad \widehat{\mathfrak{d}}: \Omega_{\mathrm{IV}}\to \mathrm{GL}_+(n+1,\R),\\
\widehat{\mathfrak{c}}&: \Omega_{\mathrm{IV}}\to \R(n+1,2),\quad
\widehat{\mathfrak{b}}: \Omega_{\mathrm{IV}}\to \R(2,n+1)
  \end{split}
   \]
be the smooth maps defined by
\[
 \mathrm{P}(\beta)=
\begin{pmatrix}
\widehat{\mathfrak{a}}(\beta) & \widehat{\mathfrak{b}}(\beta) \\
\widehat{\mathfrak{c}}(\beta) & \widehat{\mathfrak{d}}(\beta) \\
\end{pmatrix},
\quad \forall\, \beta\in \Omega_{\mathrm{IV}}.
\]
Moreover, for each $t\in \R$, let
\[
 \mathbf{\rho}(t) = (\rho_1(t),\rho_2(t))=
 \begin{pmatrix}
   \cos t & -\sin t \\
 \sin t & \quad \cos t  \\
\end{pmatrix}
 \in \mathrm{SO}(2).
\]

We have the following.

\begin{lemma}\label{lemmaa}
The map $\widehat{\mathfrak{a}}:\Omega_{\mathrm{IV}}\to \mathrm{GL}_+(2,\R)$ has the
following invariance properties:
\[
   \widehat{\mathfrak{a}}(\mathrm{R}  \beta) =
    \widehat{\mathfrak{a}}(\beta), \quad
      {\widehat{\mathfrak{a}}(\beta \mathrm{r}^{-1})}^{-1}\,
        \mathrm{r} \, \widehat{\mathfrak{a}}(\beta) \in \mathrm{SO}(2),
   \]
for each $\beta \in \Omega_{\mathrm{IV}}$, $\mathrm{R}\in \mathrm{SO}(n+1)$, and $\mathrm{r}\in \mathrm{SO}(2)$.
In addition, there exists a unique smooth map $\eta : \Omega_{\mathrm{IV}}\times \mathrm{SO}(2)\to \R$, such that
\[
 \rho\left(\eta(\beta,\mathrm{r})\right)= {\widehat{\mathfrak{a}}(\beta\,  \mathrm{r}^{-1})}^{-1}\, \mathrm{r}\, \widehat{\mathfrak{a}}(\beta)\, \mathrm{r}^{-1},\quad
\eta(\mathrm{O}_{\mathrm{IV}},I_{2})=0,
        \]
for each $\beta\in \Omega_{\mathrm{IV}}$ and $\mathrm{r}\in \mathrm{SO}(2)$.
\end{lemma}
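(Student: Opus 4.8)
\medskip

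The plan is to exploit the equivariance of the global section $\mathrm{P}$ with respect to the residual left and right actions of $\mathrm{SO}(n+1)$ and $\mathrm{SO}(2)$. First I would record how $\mathrm{SO}(n+1)$ and $\mathrm{SO}(2)$ act on $\Omega_{\mathrm{IV}}$: for $\mathrm R\in \mathrm{SO}(n+1)$ and $\mathrm r\in \mathrm{SO}(2)$ one has $L_{\mathbf S(I_2,\mathrm R)}(\beta)=\mathrm R\beta$ and $L_{\mathbf S(\mathrm r,I_{n+1})}(\beta)=\beta\,\mathrm r^{-1}$. The key observation is that $\mathrm{B}(\beta)$, built out of $\mathbf j_1(\beta)$, $\mathbf j_2(\beta)$ and the $\mathrm B_j(\beta)$, transforms under these actions by an explicit block matrix: under left multiplication by $\mathrm R$ the frame $\mathrm B(\mathrm R\beta)$ equals $\mathbf S(I_2,\mathrm R)\,\mathrm B(\beta)\,\mathbf S(I_2,\mathrm R)^{-1}$ up to the ordering of the last $n+1$ columns (this permutation being absorbed because $\mathrm B_j$ has $\delta_j^i$ in its lower block), and under right multiplication by $\mathrm r^{-1}$ on $\beta$ the first two columns $(\mathbf j_1,\mathbf j_2)$ get mixed by $\mathrm r$ while the last $n+1$ are unchanged. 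Because the Gram--Schmidt map $\mathfrak T$ is uniquely determined and $\mathrm{SO}(2)\times\mathrm{SO}(n+1)$ acts by orthogonal transformations preserving the flag used in the Gram--Schmidt process, $\mathfrak T$ is equivariant accordingly, and hence so is $\mathrm P(\beta)=\mathrm B(\beta)\mathfrak T(\beta)$. Reading off the $(1,1)$ block of the resulting identities gives exactly $\widehat{\mathfrak a}(\mathrm R\beta)=\widehat{\mathfrak a}(\beta)$ and $\widehat{\mathfrak a}(\beta\mathrm r^{-1})^{-1}\,\mathrm r\,\widehat{\mathfrak a}(\beta)\in\mathrm{SO}(2)$.

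\medskip

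For the second part, set $g(\beta,\mathrm r):=\widehat{\mathfrak a}(\beta\,\mathrm r^{-1})^{-1}\,\mathrm r\,\widehat{\mathfrak a}(\beta)\,\mathrm r^{-1}$. By the first part $g(\beta,\mathrm r)\in\mathrm{SO}(2)$, so it can be written as $\rho(\vartheta)$ for some angle $\vartheta$ well defined modulo $2\pi$. To pin down a genuine smooth real-valued $\eta$ I would invoke the fact that $\Omega_{\mathrm{IV}}\times\mathrm{SO}(2)$ is connected with $\Omega_{\mathrm{IV}}$ contractible (the preceding Lemma) and that $\mathrm{SO}(2)\cong\SS^1$; more directly, $g$ is a continuous map from the connected manifold $\Omega_{\mathrm{IV}}\times\mathrm{SO}(2)$ to $\mathrm{SO}(2)$, and the covering map $\rho:\R\to\mathrm{SO}(2)$ has the path-lifting and homotopy-lifting properties. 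Since $\Omega_{\mathrm{IV}}$ is simply connected and $\mathrm{SO}(2)$ has fundamental group $\Z$ generated by its own loop, one checks that $g_\ast:\pi_1(\Omega_{\mathrm{IV}}\times\mathrm{SO}(2))\to\pi_1(\mathrm{SO}(2))$ is trivial: a generating loop is $\{\mathrm O_{\mathrm{IV}}\}\times\mathrm{SO}(2)$, along which $\beta\mathrm r^{-1}=\mathrm O_{\mathrm{IV}}=\beta$ and $\widehat{\mathfrak a}(\mathrm O_{\mathrm{IV}})$ is a fixed matrix, so $g(\mathrm O_{\mathrm{IV}},\mathrm r)=\widehat{\mathfrak a}(\mathrm O_{\mathrm{IV}})^{-1}\,\mathrm r\,\widehat{\mathfrak a}(\mathrm O_{\mathrm{IV}})\,\mathrm r^{-1}$, a conjugation in the abelian group $\mathrm{SO}(2)$, hence identically $I_2$; therefore this loop is contractible in $\mathrm{SO}(2)$ under $g$. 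Consequently $g$ lifts through $\rho$ to a continuous (hence, by smoothness of $g$ and $\rho$, smooth) map $\eta:\Omega_{\mathrm{IV}}\times\mathrm{SO}(2)\to\R$; normalizing the lift by the single condition $\eta(\mathrm O_{\mathrm{IV}},I_2)=0$ makes it unique.

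\medskip

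The routine block-matrix manipulations (verifying the transformation law for $\mathrm B(\beta)$ and checking that the flag used by Gram--Schmidt is $\mathrm{SO}(2)\times\mathrm{SO}(n+1)$-invariant) I would carry out but not reproduce in detail. The main obstacle, and the only genuinely non-formal point, is the existence of the global smooth branch $\eta$: one must be sure the monodromy of $g$ around the one nontrivial circle direction in $\Omega_{\mathrm{IV}}\times\mathrm{SO}(2)$ vanishes, which is precisely the computation at $\beta=\mathrm O_{\mathrm{IV}}$ above showing $g(\mathrm O_{\mathrm{IV}},\cdot)\equiv I_2$. Everything else is bookkeeping with the explicit section $\mathrm P$.
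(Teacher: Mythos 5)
Your plan is workable and, for the second invariance property, it runs on the same mechanism as the paper's proof: $\mathbf{S}(\mathrm{r},I_{n+1})\,\mathrm{P}(\beta)$ lies in the fibre $(\pi_2^-)^{-1}(\beta\,\mathrm{r}^{-1})$, hence equals $\mathrm{P}(\beta\,\mathrm{r}^{-1})$ times an element of the structure group $\mathrm{SO}(2)\times\mathrm{SO}(n+1)$, and comparing top-left blocks gives $\widehat{\mathfrak{a}}(\beta\,\mathrm{r}^{-1})^{-1}\,\mathrm{r}\,\widehat{\mathfrak{a}}(\beta)\in\mathrm{SO}(2)$. You diverge from the paper in two places. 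First, for the identity $\widehat{\mathfrak{a}}(\mathrm{R}\beta)=\widehat{\mathfrak{a}}(\beta)$ the paper does not use equivariance of the frame at all: it computes $\widehat{\mathfrak{a}}(\beta)$ in closed form (formula \eqref{a-beta-u-v}) and observes that it depends only on $\|\mathrm{u}\|$, $\mathrm{u}\cdot\mathrm{v}$ and $\mu(\mathrm{u},\mathrm{v})$, which are $\mathrm{SO}(n+1)$-invariant. Your route can be made to work, but as stated it has a soft spot: the fibre argument alone only yields $\widehat{\mathfrak{a}}(\mathrm{R}\beta)^{-1}\widehat{\mathfrak{a}}(\beta)\in\mathrm{SO}(2)$, and the asserted conjugation-equivariance of $\mathfrak{T}$ is not literally true (conjugating an upper triangular matrix by $\mathbf{S}(I_2,\mathrm{R})$ destroys triangularity of the lower-right $(n+1)\times(n+1)$ block). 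To get actual equality you must add that $\widehat{\mathfrak{a}}$ is by construction upper triangular with positive diagonal, so the residual $\mathrm{SO}(2)$ factor is forced to be $I_2$ --- equivalently, that Gram--Schmidt on the first two columns of $\mathrm{B}$ depends only on their Gram matrix, which is unchanged by $\beta\mapsto\mathrm{R}\beta$. Second, for the lift $\eta$, the paper exhibits an explicit null-homotopy $\upsilon(\beta,\mathrm{r},t)=\widehat{\eta}(t\beta,\mathrm{r})$ using star-shapedness of $\Omega_{\mathrm{IV}}$, whereas you verify triviality of the induced map on $\pi_1$ by evaluating on the generating loop $\{\mathrm{O}_{\mathrm{IV}}\}\times\mathrm{SO}(2)$; both establish the same lifting criterion and both reduce to the same computation at the origin. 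One small inaccuracy there: $g(\mathrm{O}_{\mathrm{IV}},\mathrm{r})$ is not \emph{a priori} a conjugation inside the abelian group $\mathrm{SO}(2)$, since $\widehat{\mathfrak{a}}(\mathrm{O}_{\mathrm{IV}})$ lives in $\mathrm{GL}_+(2,\R)$; the correct observation is simply that $\widehat{\mathfrak{a}}(\mathrm{O}_{\mathrm{IV}})=I_2$, whence $g(\mathrm{O}_{\mathrm{IV}},\mathrm{r})=\mathrm{r}\,\mathrm{r}^{-1}=I_2$. With these two points patched, your argument is complete and is, on balance, a legitimate alternative that trades the paper's explicit formula for a structural equivariance argument.
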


\begin{proof}
From the definition of $\mathrm{P} : \Omega_{\mathrm{IV}}\to \O(2,n+1)$,
it follows that
\begin{eqnarray}
 \widehat{\mathfrak{a}}(\beta) &=&
\frac{1}{\sqrt{1-\|\mathrm{u}\|^2}}
\begin{pmatrix}
1 & \frac{\mathrm{u}\cdot\mathrm{v}}{\sqrt{1-\mu(\mathrm{u},\mathrm{v})}} \\
0 & \frac{1-\|\mathrm{u}\|^2}{\sqrt{1-\mu(\mathrm{u},\mathrm{v})}} \\
      \end{pmatrix},\label{a-beta-u-v} \\
  \widehat{\mathfrak{c}}(\beta)&=&
\begin{pmatrix}
\frac{\mathrm{u}}{\sqrt{1-\|\mathrm{u}\|^2}} &
 \frac{(1-\|\mathrm{u}\|^2) \mathrm{v}
+ (\mathrm{u}\cdot \mathrm{v}) \mathrm{u}}{\sqrt{1 - \|\mathrm{u}\|^2}\sqrt{1-\mu(\mathrm{u},\mathrm{v})}} \\
      \end{pmatrix},\label{c-beta-u-v}
   \end{eqnarray}
for each $\beta = (\mathrm u\,\mathrm v) \in \Omega_{\mathrm{IV}}$.
This implies that
$\widehat{\mathfrak{a}}(\mathrm{R}\, \beta) = \widehat{\mathfrak{a}}(\beta)$,
for every
$\mathrm{R}\in \mathrm{SO}(n+1)$.
For $\mathrm{r}\in \mathrm{SO}(2)$ and $\beta\in \Omega_{\mathrm{IV}}$, we have
\begin{equation}\label{eqv1}
\mathrm{\mathbf{S}}(\mathrm{r}, {I}_{n+1})\, \mathrm{P}(\beta)=
 \begin{pmatrix}
  \mathrm{r}\, \widehat{\mathfrak{a}}(\beta)  & \mathrm{r}\, \widehat{\mathfrak{b}}(\beta) \\
   \widehat{\mathfrak{c}}(\beta) & \widehat{\mathfrak{d}}(\beta) \\
    \end{pmatrix}.
      \end{equation}
A direct computation taking into account \eqref{a-beta-u-v} and \eqref{c-beta-u-v} shows that
\[
\pi^-_2[\mathrm{\mathbf{S}}(\mathrm{r},I_{n+1})\, \mathrm{P}(\beta)]
=\widehat{\mathfrak{c}}(\beta)\, {\widehat{\mathfrak{a}}(\beta)}^{-1} \mathrm{r}^{-1}
 =\beta\, \mathrm{r}^{-1}.
  \]
This implies that the left hand side of \eqref{eqv1} belongs to the fiber
$(\pi_2^-)^{-1}(\beta\,\mathrm{r}^{-1})$.
As a consequence, we can write
\begin{equation}\label{eqv2}
 \mathrm{\mathbf{S}}(\mathrm{r},{I}_{n+1})\, \mathrm{P}(\beta)
 =\mathrm{P}(\beta\,\mathrm{r}^{-1})\,
  \begin{pmatrix}
   \widehat{ \mathrm{r}} & 0 \\
    0 & \widehat{\mathrm{R}} \\
  \end{pmatrix},
   \end{equation}
for $\widehat{\mathrm{r}}\in \mathrm{SO}(2)$ and $\widehat{\mathrm{R}}\in \mathrm{SO}(n+1)$.
Combining \eqref{eqv1} and \eqref{eqv2}, we conclude that
\[
{\widehat{\mathfrak{a}}(\beta\,\mathrm{r}^{-1})}^{-1}\, \mathrm{r}\, \widehat{\mathfrak{a}}(\beta)
=\widehat{ \mathrm{r}}\in \mathrm{SO}(2).
   \]

As for the existence of $\eta$,
consider the map
\[
 \widehat{\eta}: \Omega_{\mathrm{IV}}\times \mathrm{SO}(2) \to \mathrm{SO}(2),\,\,
 (\beta,\mathrm{r})\longmapsto {\widehat{\mathfrak{a}}(\beta\,
  \mathrm{r}^{-1})}^{-1}\, \mathrm{r}\, \widehat{\mathfrak{a}}(\beta)\, \mathrm{r}^{-1}.
   \]
To conclude the proof it suffices to show that $\widehat{\eta}$ is homotopic to a constant map.
Since $\Omega_{\mathrm{IV}}$ is star-shaped with respect to the origin $\mathrm{O}_{\mathrm{IV}}$,
we can define the smooth map
\[
 \upsilon : \Omega_{\mathrm{IV}}\times \mathrm{SO}(2)\times \R \to \mathrm{SO}(2), \,
  (\beta,\mathrm{r},t) \longmapsto  \widehat{\eta}(t\beta,\mathrm{r}).
  \]
 By construction, $\upsilon(\beta,\mathrm{r}, 1)=\widehat{\eta}(\beta,\mathrm{r})$
and $\upsilon(\beta,\mathrm{r},0)=I_{2}$. This shows that $\upsilon$ is a homotopy between the
constant map $I_{2}$ and $\widehat{\eta}$, which proves the claim.
\end{proof}

 Let
\begin{itemize}
\item $\mathfrak{m}:\Omega_{\mathrm{IV}}\times \Omega_{\mathrm{IV}}\to \Omega_{\mathrm{IV}}$,
\item $\mathfrak{r}:\Omega_{\mathrm{IV}}\times \Omega_{\mathrm{IV}}\to \mathrm{SO}(2)$, and
\item $\mathfrak{R}:\Omega_{\mathrm{IV}}\times \Omega_{\mathrm{IV}}\to \mathrm{SO}(n+1)$
\end{itemize}
be the smooth maps defined by requiring that
\[
 \mathrm{P}(\beta)  \mathrm{P}(\beta')
 =\mathrm{P}(\mathfrak{m}(\beta,\beta'))
  \begin{pmatrix}
    \mathfrak{r}(\beta,\beta') & 0 \\
    0 & \mathfrak{R}(\beta,\beta') \\
  \end{pmatrix},
    \quad \forall\, \beta,\,\beta'\in \Omega_{\mathrm{IV}}.
    \]
Since $\Omega_{\mathrm{IV}}$ is simply connected, there exists a unique map
$\Theta : \Omega_{\mathrm{IV}}\times \Omega_{\mathrm{IV}}\to \R$,
 such that
$\mathfrak{r}(\beta,\beta') =\rho[\Theta(\beta,\beta')]$ and
$\Theta(\mathrm{O}_{\mathrm{IV}},\mathrm{O}_{\mathrm{IV}})=0$. Let
\begin{itemize}
\item $\psi : \O(2,n+1)\to \mathrm{SO}(2)$,
\item $\Psi : \O(2,n+1)\to \mathrm{SO}(n+1)$, and
\item $\zeta :\O(2,n+1)\times \O(2,n+1)\to \R$
\end{itemize}
be the maps defined by requiring that
\[
 {\mathbf{X}}=
 \mathrm{P}\left(\pi^-_2(\mathrm{\mathbf{X}})\right)\,
                   \begin{pmatrix}
                     \psi(\mathrm{\mathbf{X}}) & 0 \\
                      0 & \Psi(\mathrm{\mathbf{X}}) \\
                     \end{pmatrix}
                \]
and
\[
 \zeta(\mathrm{\mathbf{X}},\mathrm{\mathbf{X}}') =
\Theta\left(\pi_2^-(\mathrm{\mathbf{X}}),\Psi(\mathrm{\mathbf{X}})\, \pi_2^-(\mathrm{\mathbf{X}}')\, \psi(\mathrm{\mathbf{X}})^{-1}\right)+\eta\left(\pi_2^-(\mathrm{\mathbf{X}}'),\psi(\mathrm{\mathbf{X}})\right),
    \]
for each $\mathrm{\mathbf{X}},\mathrm{\mathbf{X}}' \in \O(2,n+1)$.

\vskip0.1cm
We are now in a position to state the first main result of the paper.

\begin{thmx}\label{ThmA}
The subset of $\O(2,n+1)\times \R$ given by
\[
\OO(2,n+1)= \left\{(\mathrm{\mathbf{X}},\tau)\in \O(2,n+1)\times \R \mid \psi(\mathrm{\mathbf{X}})=\rho(\tau)\right\}
   \]
is a connected embedded submanifold diffeomorphic to
$\Omega_{\mathrm{IV}}\times \R\times \mathrm{SO}(n+1)$.
The multiplication
\[
  (\mathrm{\mathbf{X}},\tau)\star (\mathrm{\mathbf{X}}',\tau')=(\mathrm{\mathbf{X}}\, \mathrm{\mathbf{X}}',\tau+\tau'+
\zeta(\mathrm{\mathbf{X}},\mathrm{\mathbf{X}}'))
   \]
gives $\OO(2,n+1)$ the structure of a Lie group
with neutral element $({I}_{n+3},0)$ and inverse
$(\mathrm{\mathbf{X}},\tau)^{-1}=(\mathrm{\mathbf{X}}^{-1},-\tau-\zeta(\mathrm{\mathbf{X}},\mathrm{\mathbf{X}}^{-1}))$.
Moreover, the map
\[
   \sigma :  \OO(2,n+1)\ni (\mathrm{\mathbf{X}},\tau)\longmapsto \mathrm{\mathbf{X}}\in \O(2,n+1)
   \]
is a covering homomorphism of Lie groups.
If $\widehat{\mathrm{Z}}(2,n+1)$ denotes the center of $\OO(2,n+1)$, then

\begin{enumerate}
\item 
$\widehat{\mathrm{Z}}(2,n+1)=\left\{(I,2\pi k) \mid k\in \Z \right\}$ {$\cong \Z$}, if $n$ is even;

\item 
$\widehat{\mathrm{Z}}(2,n+1)=\left\{((-1)^kI,\pi k) \mid k\in \Z \right\}$ {$\cong \Z_2 \times \Z$}, if $n$ is odd.
\end{enumerate}
\end{thmx}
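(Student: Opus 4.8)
The strategy is to analyze the structure of $\OO(2,n+1)$ directly from its defining formula and the fibration $\pi_2^-$. First I would establish that $\OO(2,n+1)$ is an embedded submanifold: the condition $\psi(\mathbf X)=\rho(\tau)$ says that $\tau$ is a lift of the $\mathrm{SO}(2)$-component $\psi(\mathbf X)$ along the covering $\rho:\R\to \mathrm{SO}(2)$, so one should check that the map $(\mathbf X,\tau)\mapsto \rho(\tau)\psi(\mathbf X)^{-1}$ has $I_2$ as a regular value, equivalently that differentiating the constraint in the $\tau$ and $\psi$ directions gives a surjection onto $\mathfrak{so}(2)$. The global section $\mathrm P:\Omega_{\mathrm{IV}}\to \O(2,n+1)$ splits $\O(2,n+1)\cong \Omega_{\mathrm{IV}}\times \mathrm{SO}(2)\times \mathrm{SO}(n+1)$ as a manifold, and under this identification $\psi$ is just projection to the $\mathrm{SO}(2)$ factor; hence $\OO(2,n+1)\cong \Omega_{\mathrm{IV}}\times\{(\tau,\mathrm r):\rho(\tau)=\mathrm r\}\times \mathrm{SO}(n+1)\cong \Omega_{\mathrm{IV}}\times \R\times \mathrm{SO}(n+1)$, which is connected because each factor is. Since $\Omega_{\mathrm{IV}}$ and $\mathrm{SO}(n+1)$ are the fibers and base pieces and $\R$ covers $\mathrm{SO}(2)$ once, the map $\sigma$ is then a covering of the same degree as $\R\to\mathrm{SO}(2)$ restricted appropriately, i.e.\ an infinite cyclic covering on the $\mathrm{SO}(2)$-direction.

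Next I would verify the group axioms for $\star$. Associativity reduces to the cocycle identity
\[
\zeta(\mathbf X,\mathbf X') + \zeta(\mathbf X\mathbf X',\mathbf X'') = \zeta(\mathbf X',\mathbf X'') + \zeta(\mathbf X,\mathbf X'\mathbf X''),
\]
which must be checked from the explicit definition of $\zeta$ in terms of $\Theta$ and $\eta$; this in turn follows because $\Theta$ is (by its defining property and simple connectedness of $\Omega_{\mathrm{IV}}$) the unique lift to $\R$ of the $\mathrm{SO}(2)$-valued cocycle $\mathfrak r$, and $\eta$ likewise lifts $\widehat\eta$, so the $\R$-valued identity is forced by the corresponding $\mathrm{SO}(2)$-valued identity together with uniqueness of lifts fixing the base value $0$. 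The neutral element is $(I_{n+3},0)$ since $\psi(I)=I_2=\rho(0)$ and $\zeta(I,\mathbf X')=\zeta(\mathbf X,I)=0$ (again from the normalizations $\Theta(\mathrm O_{\mathrm{IV}},\mathrm O_{\mathrm{IV}})=0$, $\eta(\mathrm O_{\mathrm{IV}},I_2)=0$), and the inverse formula is read off from solving $(\mathbf X,\tau)\star(\mathbf X^{-1},\tau')=(I,0)$. Smoothness of $\star$ is immediate because $\zeta$ is built from smooth maps. That $\sigma$ is a homomorphism is built into the first coordinate of $\star$; that it is a covering follows from the manifold identification above together with the fact that the kernel $\{(I,\tau):\rho(\tau)=I_2\}=\{(I,2\pi k):k\in\Z\}\cong\Z$ is discrete.

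Finally, for the center I would compute $\widehat{\mathrm Z}(2,n+1)$ by pushing forward through $\sigma$. Any central element $(\mathbf X,\tau)$ must have $\mathbf X$ in the center of $\O(2,n+1)$, which is $\{\pm I\}$ if $n$ is odd (so that $-I\in\O(2,n+1)$) and $\{I\}$ if $n$ is even (since $-I\notin \O(2,n+1)$ when $n+3$ is odd). Conversely, for $\mathbf X=\pm I$ one must determine for which $\tau$ the pair $(\pm I,\tau)$ is central, i.e.\ commutes with every $(\mathbf X',\tau')$; using that $\zeta(\pm I,\mathbf X')$ and $\zeta(\mathbf X',\pm I)$ can be evaluated explicitly (the $\psi$, $\Psi$ of $\pm I$ are $\pm I_2$, $\pm I_{n+1}$, and $\Theta$, $\eta$ simplify accordingly), the commutation condition forces $\tau$ into the stated discrete set; and the constraint $\psi(\pm I)=\rho(\tau)$ forces $\tau\in\pi\Z$ in the odd case (with parity matching the sign) and $\tau\in 2\pi\Z$ in the even case. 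This yields $\widehat{\mathrm Z}\cong\Z$ for $n$ even and $\widehat{\mathrm Z}\cong\Z_2\times\Z$ for $n$ odd, generated by $((-1)I,\pi)$ whose square is $(I,2\pi)$.

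\textbf{Main obstacle.} The routine but delicate part is establishing the cocycle identity for $\zeta$ and the explicit evaluation of $\zeta$ on central elements; the conceptual content is small — everything is forced by uniqueness of lifts of $\mathrm{SO}(2)$-valued maps to $\R$ once base values are normalized — but keeping the bookkeeping of the $\psi$, $\Psi$, $\Theta$, $\eta$ terms straight, and correctly identifying how $\Psi(\mathbf X)\,\pi_2^-(\mathbf X')\,\psi(\mathbf X)^{-1}$ interacts with the arguments, is where errors would creep in. I expect the verification that $\sigma$ is a \emph{covering} (not merely a surjective submersion with discrete kernel) to be the other point needing care, though it follows cleanly from the product decomposition of the group manifold.
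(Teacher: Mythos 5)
Your overall mechanism is the right one, and it is essentially the paper's: every ``soft'' verification (associativity, neutral element, inverses, centrality) reduces to the observation that a continuous map into a fiber of $\sigma$, or more generally a continuous $2\pi\Z$-valued function on a connected space, is constant, and is then pinned down by its value at the identity. The paper packages this slightly differently — instead of proving the cocycle identity for $\zeta$ it forms the associator $\Xi(\mathbf{X},\tau)$ and the conjugation maps $f_k$, observes that each covers a constant map and hence lands in a discrete fiber, and evaluates at $(I,0)$ — but your route through the explicit cocycle identity and through explicit evaluation of $\zeta(\pm I,\cdot)$ would also work, at the cost of more bookkeeping. Your identification of the manifold structure via the global section $\mathrm{P}$, the covering property of $\sigma$, and the computation of the center (including $\psi(-I)=-I_2=\rho(\pi)$ and the parity constraint) all match the paper.

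The genuine gap is that you never establish that $\star$ is well defined, i.e.\ that $(\mathbf{X}\mathbf{X}',\,\tau+\tau'+\zeta(\mathbf{X},\mathbf{X}'))$ actually satisfies the defining constraint of $\OO(2,n+1)$. This amounts to the identity
\[
\psi(\mathbf{X}\,\mathbf{X}')=\psi(\mathbf{X})\,\psi(\mathbf{X}')\,\rho\!\left(\zeta(\mathbf{X},\mathbf{X}')\right),
\]
which is the paper's Lemma A2 and is the \emph{only} step in the whole proof that is not forced by uniqueness of lifts: it requires an actual computation with the section $\mathrm{P}$, namely decomposing $\mathrm{P}(\beta)\,\mathbf{S}(\mathrm{r},\mathrm{R})\,\mathrm{P}(\beta')\,\mathbf{S}(\mathrm{r}',\mathrm{R}')$ and using the equivariance properties of $\widehat{\mathfrak{a}}$ (in particular $\widehat{\mathfrak{a}}(\mathrm{R}\beta)=\widehat{\mathfrak{a}}(\beta)$ and $\psi(\mathbf{S}(\mathrm{r},\mathrm{R})\mathrm{P}(\beta))=\rho(\eta(\beta,\mathrm{r}))\,\mathrm{r}$, which is where the term $\Psi(\mathbf{X})\,\pi_2^-(\mathbf{X}')\,\psi(\mathbf{X})^{-1}$ in the definition of $\zeta$ comes from). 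Your plan says the $\R$-valued identities are ``forced by the corresponding $\mathrm{SO}(2)$-valued identity,'' but that $\mathrm{SO}(2)$-valued identity is precisely the displayed formula, and it is not a formal consequence of the definitions of $\Theta$ and $\eta$; without it you have neither closure of the product nor the input needed to run your lift-uniqueness argument for the cocycle identity. A smaller point of the same kind: $\zeta(I,\mathbf{X}')=\zeta(\mathbf{X},I)=0$ does not follow from the normalizations $\Theta(\mathrm{O}_{\mathrm{IV}},\mathrm{O}_{\mathrm{IV}})=0$, $\eta(\mathrm{O}_{\mathrm{IV}},I_2)=0$ alone; you need that $\Theta(\mathrm{O}_{\mathrm{IV}},\cdot)$, $\Theta(\cdot,\mathrm{O}_{\mathrm{IV}})$ and $\eta(\cdot,I_2)$ are $2\pi\Z$-valued and hence identically zero by connectedness — the same trick again, but it must be said.
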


The proof of Theorem A is organized in four lemmas.

\begin{lemmaA}\label{lemma:A1}
The subset $\OO(2,n+1)$ is an embedded submanifold diffeomorphic to
$\Omega_{\mathrm{IV}}\times \R\times \mathrm{SO}(n+1)$ and $\sigma$ is a covering map.
\end{lemmaA}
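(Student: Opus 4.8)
The plan is to exhibit an explicit diffeomorphism $\Phi : \Omega_{\mathrm{IV}}\times \R\times \mathrm{SO}(n+1)\to \OO(2,n+1)$ and then check that $\sigma$ has the local-triviality property of a covering. First I would define $\Phi$ by
\[
\Phi(\beta,\tau,\mathrm R) := \left(\mathrm P(\beta)\begin{pmatrix}\rho(\tau) & 0\\ 0 & \mathrm R\end{pmatrix},\,\tau\right).
\]
Since $\psi$ and $\Psi$ were defined exactly by the decomposition $\mathbf X = \mathrm P(\pi_2^-(\mathbf X))\,\mathrm{diag}(\psi(\mathbf X),\Psi(\mathbf X))$, one reads off that $\psi$ of the first component of $\Phi(\beta,\tau,\mathrm R)$ equals $\rho(\tau)$, so $\Phi$ indeed lands in $\OO(2,n+1)$; and the inverse is $\mathbf X\mapsto(\pi_2^-(\mathbf X),\tau,\Psi(\mathbf X))$ with $\tau$ the unique real with $\rho(\tau)=\psi(\mathbf X)$ coming from the $\R$-coordinate — wait, that last point needs care, see below. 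Smoothness of $\Phi$ is clear ($\mathrm P$ is smooth by construction via Gram--Schmidt); smoothness of the inverse follows because $\pi_2^-$, $\psi$, $\Psi$ are all smooth and because on $\OO(2,n+1)$ the value $\tau$ is literally the second coordinate of the pair, so no logarithm of $\psi(\mathbf X)$ is needed. From this $\OO(2,n+1)$ is an embedded submanifold of $\O(2,n+1)\times\R$: it is the preimage of the diagonal-type submanifold $\{(\mathbf X,\tau):\psi(\mathbf X)\rho(-\tau)=I_2\}$, which is a regular level set since $\rho:\R\to\mathrm{SO}(2)$ is a submersion. Connectedness is immediate from the product description, as $\Omega_{\mathrm{IV}}$, $\R$, and $\mathrm{SO}(n+1)$ are all connected.

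Next I would show $\sigma:(\mathbf X,\tau)\mapsto\mathbf X$ is a covering map. The fiber over $\mathbf X\in\O(2,n+1)$ is $\{(\mathbf X,\tau):\rho(\tau)=\psi(\mathbf X)\}$, which is a coset of $2\pi\Z$ in $\R$, hence a countable discrete set — so $\sigma$ is surjective with discrete fibers. For local triviality, fix $\mathbf X_0$ and pick any $\tau_0$ with $\rho(\tau_0)=\psi(\mathbf X_0)$. On a small connected neighborhood $U$ of $\mathbf X_0$, the smooth $\mathrm{SO}(2)$-valued map $\psi|_U$ lifts uniquely to a smooth $\R$-valued map $\widetilde\psi:U\to\R$ with $\widetilde\psi(\mathbf X_0)=\tau_0$ (standard lifting through the covering $\R\to\mathrm{SO}(2)$, available since $U$ is simply connected if we shrink it). Then $\sigma^{-1}(U)=\bigsqcup_{k\in\Z}\{(\mathbf X,\widetilde\psi(\mathbf X)+2\pi k):\mathbf X\in U\}$, each sheet mapping diffeomorphically onto $U$ via $\sigma$; this is exactly the defining property of a covering map, with deck group $\Z$. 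Using the diffeomorphism $\Phi$ one sees the total space is in particular a manifold so the local homeomorphism statement upgrades to local diffeomorphism.

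The step I expect to be the main obstacle — really the only nontrivial one — is verifying that the composite map $\mathbf X\mapsto(\pi_2^-(\mathbf X),\tau(\mathbf X),\Psi(\mathbf X))$ is well-defined and smooth as a map $\OO(2,n+1)\to\Omega_{\mathrm{IV}}\times\R\times\mathrm{SO}(n+1)$, i.e.\ that $\Phi$ really is a bijection with smooth inverse and not merely a smooth surjection. The subtlety is that $\psi$, $\Psi$ depend on the chosen global section $\mathrm P$, and one must confirm that $\mathrm P(\pi_2^-(\mathbf X))^{-1}\mathbf X$ is block-diagonal in $\mathrm{SO}(2)\times\mathrm{SO}(n+1)$ (this is where the principal-bundle structure of $\pi_2^-$ with structure group $\mathrm{SO}(2)\times\mathrm{SO}(n+1)$ enters) so that $\psi$, $\Psi$ are genuinely defined; granting that, the identity $\Phi(\pi_2^-(\mathbf X),\tau,\Psi(\mathbf X))=(\mathbf X,\tau)$ holds for $(\mathbf X,\tau)\in\OO(2,n+1)$ precisely because $\rho(\tau)=\psi(\mathbf X)$ on that locus. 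I would also need Lemma (the star-shapedness of $\Omega_{\mathrm{IV}}$) only indirectly — it is not needed here, as connectedness of $\Omega_{\mathrm{IV}}$ alone suffices for the present lemma, with simple-connectedness reserved for later constructions (e.g.\ the global definition of $\Theta$ and $\zeta$).
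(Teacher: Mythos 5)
Your proof is correct and follows essentially the same route as the paper: the same explicit diffeomorphism $(\beta,\tau,\mathrm R)\mapsto\bigl(\mathrm P(\beta)\,\mathbf{S}(\rho(\tau),\mathrm R),\tau\bigr)$ built from the global section $\mathrm P$, with the covering property obtained by locally lifting $\psi$ through $\rho:\R\to\mathrm{SO}(2)$ and writing $\sigma^{-1}(U)$ as a disjoint union of graphs. The only cosmetic differences are that you obtain the embedded-submanifold property as a regular level set of $(\mathbf X,\tau)\mapsto\psi(\mathbf X)\rho(-\tau)$ where the paper uses a local-graph argument, and you take small simply connected neighborhoods where the paper uses the larger slit neighborhoods $U_*=\{\mathbf X:\psi(\mathbf X)\neq\rho(\tau_*+\pi)\}$; both work equally well.
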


\begin{proof}
Let $(\mathrm{\mathbf{X}}_*,\tau_*)$ be an element of $\OO(2,n+1)$ and $U\subset \O(2,n+2)$ be a contractible
open neighborhood of $\mathrm{\mathbf{X}}_*$. Then there exists a unique differentiable function
$\upsilon : U \to \R$
such that $\psi(\mathrm{\mathbf{X}})=\rho(\upsilon(\mathrm{\mathbf{X}}))$,
for every $\mathrm{\mathbf{X}}\in U$, satisfying $\upsilon(\mathrm{\mathbf{X}}_*) =\tau_*$.
Choose $\epsilon\in (0,\pi)$ and let
\[
  U'=\left\{\mathrm{\mathbf{X}}\in U \mid \upsilon(\mathrm{\mathbf{X}})\in (\tau_*-\epsilon,\tau_*+\epsilon)\right\}.
  \]
Then, $\widetilde{U}'=U'\times (\tau_*-\epsilon,\tau_*+\epsilon)\subset\O(2,n+1)\times \R$ is an open neighborhood
of $(\mathrm{\mathbf{X}}_*,\tau_*)$ such that
\[
  \widetilde{U}'\cap \OO(2,n+1)=\{(\mathrm{\mathbf{X}},\tau)\in \widetilde{U}' \mid \tau =\upsilon(\mathrm{\mathbf{X}})\}.
   \]
Hence, $\OO(2,n+1)\cap \widetilde{U}'$ is the graph of the function $\upsilon:U'\to \R$.
This implies that $\OO(2,n+1)$ is a submanifold of $\O(2,n+1)\times \R$. Clearly, the map
\[
   \Omega_{\mathrm{IV}}\times \R\times \mathrm{SO}(n+1) \ni (\beta,\tau,\mathrm{R})\longmapsto
   (\mathrm{P}(\beta)\, \mathrm{\mathbf{S}}(\rho(\tau),\mathrm{R}),\tau)\in \OO(2,n+1)
   \]
is bijective and of maximal rank. Thus, it is a diffeomorphism of
$\Omega_{\mathrm{IV}}\times \R\times \mathrm{SO}(n+1)$ onto $\OO(2,n+1)$.

By construction, $\sigma$ is a smooth surjective submersion. So, to conclude the proof
it suffices to prove that each $\mathrm{\mathbf{X}}_*\in \O(2,n+1)$ has an open neighborhood
which is evenly covered by $\sigma$.
Choose $\tau_*\in \R$ such that $\rho(\tau_*)=\psi(\mathrm{\mathbf{X}}_*)$ and let $U_*\subset \O(2,n+1)$
be the open neighborhood
\[
  U_*=\left\{\mathrm{\mathbf{X}}\in \O(2,n+1) \mid \psi(\mathrm{\mathbf{X}})\neq \rho(\tau_*+\pi)\right\}.
    \]
We now prove that $ U_*$ is evenly covered. For each $k\in \Z$ we consider the open neighborhood of $\OO(2,n+1)$ defined by
\[
  \widehat{U}_k=\left\{(\mathrm{\mathbf{X}},\tau)\in \OO(2,n+1) \mid
   \mathbf{X}\in U_*, \, \tau \in (\tau_*+2\pi k-\pi,\tau_*+2\pi k+\pi)\right\}.
   \]
Obviously, $\widehat{U}_k\cap \widehat{U}_k=\emptyset$, for every $h,k\in \Z$, $h\neq k$.
By construction, $\bigcup_{k\in \Z}\widehat{U}_k\subset \sigma^{-1}(U_*)$.
Let $(\mathrm{\mathbf{X}}',\tau')$ be an element of $\sigma^{-1}(U_*)$.
Then $\mathrm{\mathbf{X}}'\in U_*$ and $\rho(\tau')=\psi(\mathrm{\mathbf{X}}')\neq \rho(\tau_*+\pi)$.
Hence, $\tau'\neq \tau_* + \pi$ $\mod 2\pi \Z$. Therefore, there exists a unique $k\in \Z$ such
that $\tau'\in (\tau_*-\pi+2\pi k, \tau_*+\pi +2\pi k)$. This implies that $(\mathrm{\mathbf{X}}',\tau')\in \widehat{U}_k$. Consequently, $\bigcup_{k\in \Z}\widehat{U}_k$ and $\sigma^{-1}(U_*)$ coincide.
Choose $k\in \Z$ and let $\sigma_k:\widehat{U}_k\to U_*$ be the restriction of the map $\sigma$ to $\widehat{U}_k$.
Then, $\sigma_k$ is a differentiable bijection of maximal rank and hence is a homeomorphism.
This concludes the proof.
\end{proof}

\begin{lemmaA}\label{lemma:A2}
The maps $\psi$, $\zeta$, and $\rho$ are related by
\[
 \psi(\mathrm{\mathbf{X}}\, \mathrm{\mathbf{X}}')=\psi(\mathrm{\mathbf{X}})\, \psi(\mathrm{\mathbf{X}}')\,\rho\left(\zeta(\mathrm{\mathbf{X}},\mathrm{\mathbf{X}}')\right),
 \quad \forall \,\, \mathrm{\mathbf{X}},\mathrm{\mathbf{X}}'\in \O(2,n+1).
   \]
\end{lemmaA}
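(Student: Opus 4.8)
The plan is to turn the identity into a short bookkeeping computation along the global cross section $\mathrm{P}:\Omega_{\mathrm{IV}}\to \O(2,n+1)$. Set $\beta=\pi^-_2(\mathrm{\mathbf{X}})$, $\beta'=\pi^-_2(\mathrm{\mathbf{X}}')$, $\mathrm{r}=\psi(\mathrm{\mathbf{X}})$, $\mathrm{R}=\Psi(\mathrm{\mathbf{X}})$, $\mathrm{r}'=\psi(\mathrm{\mathbf{X}}')$, $\mathrm{R}'=\Psi(\mathrm{\mathbf{X}}')$, so that, by the definition of $\psi$ and $\Psi$,
\[
 \mathrm{\mathbf{X}}=\mathrm{P}(\beta)\,\mathrm{\mathbf{S}}(\mathrm{r},\mathrm{R}),\qquad \mathrm{\mathbf{X}}'=\mathrm{P}(\beta')\,\mathrm{\mathbf{S}}(\mathrm{r}',\mathrm{R}').
\]
The first observation I would record is that the decomposition $\mathrm{\mathbf{Y}}=\mathrm{P}(\pi^-_2(\mathrm{\mathbf{Y}}))\,\mathrm{\mathbf{S}}(\psi(\mathrm{\mathbf{Y}}),\Psi(\mathrm{\mathbf{Y}}))$ is unique, because $\pi^-_2\circ \mathrm{P}=\mathrm{id}$ forces the first factor to be $\mathrm{P}(\pi^-_2(\mathrm{\mathbf{Y}}))$, and then the $\mathrm{SO}(2)\times\mathrm{SO}(n+1)$-factor, being block diagonal, determines $\psi(\mathrm{\mathbf{Y}})$ and $\Psi(\mathrm{\mathbf{Y}})$ separately. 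Hence it suffices to bring the product $\mathrm{\mathbf{X}}\mathrm{\mathbf{X}}'=\mathrm{P}(\beta)\,\mathrm{\mathbf{S}}(\mathrm{r},\mathrm{R})\,\mathrm{P}(\beta')\,\mathrm{\mathbf{S}}(\mathrm{r}',\mathrm{R}')$ into the normal form $\mathrm{P}(\,\cdot\,)\,\mathrm{\mathbf{S}}(\,\cdot\,,\,\cdot\,)$ and read off the $\mathrm{SO}(2)$-block of the second factor.

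The core step is to push $\mathrm{\mathbf{S}}(\mathrm{r},\mathrm{R})$ to the right of $\mathrm{P}(\beta')$. Applying $\pi^-_2$ and using $\pi^-_2(\mathrm{\mathbf{Y}})=\mathfrak{c}(\mathrm{\mathbf{Y}})\mathfrak{a}(\mathrm{\mathbf{Y}})^{-1}$ together with $\widehat{\mathfrak{c}}(\beta')\widehat{\mathfrak{a}}(\beta')^{-1}=\beta'$, one gets $\pi^-_2[\mathrm{\mathbf{S}}(\mathrm{r},\mathrm{R})\mathrm{P}(\beta')]=\mathrm{R}\,\beta'\,\mathrm{r}^{-1}$, so that
\[
 \mathrm{\mathbf{S}}(\mathrm{r},\mathrm{R})\,\mathrm{P}(\beta')=\mathrm{P}(\mathrm{R}\beta'\mathrm{r}^{-1})\,\mathrm{\mathbf{S}}(\mathrm{s},\mathrm{T})
\]
for uniquely determined $\mathrm{s}\in\mathrm{SO}(2)$, $\mathrm{T}\in\mathrm{SO}(n+1)$. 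Comparing the top-left $2\times 2$ blocks of the two sides and invoking the invariance $\widehat{\mathfrak{a}}(\mathrm{R}\gamma)=\widehat{\mathfrak{a}}(\gamma)$ of Lemma~\ref{lemmaa} (with $\gamma=\beta'\mathrm{r}^{-1}$), I would obtain $\mathrm{s}=\widehat{\mathfrak{a}}(\beta'\mathrm{r}^{-1})^{-1}\,\mathrm{r}\,\widehat{\mathfrak{a}}(\beta')=\rho(\eta(\beta',\mathrm{r}))\,\mathrm{r}$, the last equality being precisely the defining relation of $\eta$ in Lemma~\ref{lemmaa}. Substituting this, then replacing $\mathrm{P}(\beta)\mathrm{P}(\mathrm{R}\beta'\mathrm{r}^{-1})$ by $\mathrm{P}(\mathfrak{m}(\beta,\mathrm{R}\beta'\mathrm{r}^{-1}))\,\mathrm{\mathbf{S}}(\rho(\Theta(\beta,\mathrm{R}\beta'\mathrm{r}^{-1})),\mathfrak{R}(\beta,\mathrm{R}\beta'\mathrm{r}^{-1}))$, and collecting the three block-diagonal factors, gives
\[
 \mathrm{\mathbf{X}}\mathrm{\mathbf{X}}'=\mathrm{P}\big(\mathfrak{m}(\beta,\mathrm{R}\beta'\mathrm{r}^{-1})\big)\,\mathrm{\mathbf{S}}\big(\rho(\Theta(\beta,\mathrm{R}\beta'\mathrm{r}^{-1}))\,\rho(\eta(\beta',\mathrm{r}))\,\mathrm{r}\,\mathrm{r}',\ \mathrm{T}'\big)
\]
for some $\mathrm{T}'\in\mathrm{SO}(n+1)$ whose exact value is irrelevant.

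To conclude, I would read off the $\mathrm{SO}(2)$-block of the second factor. Since $\mathrm{SO}(2)$ is abelian, $\rho(\Theta(\beta,\mathrm{R}\beta'\mathrm{r}^{-1}))\rho(\eta(\beta',\mathrm{r}))=\rho\big(\Theta(\beta,\mathrm{R}\beta'\mathrm{r}^{-1})+\eta(\beta',\mathrm{r})\big)$, and by the definition of $\zeta$ together with $\beta=\pi^-_2(\mathrm{\mathbf{X}})$, $\beta'=\pi^-_2(\mathrm{\mathbf{X}}')$, $\mathrm{r}=\psi(\mathrm{\mathbf{X}})$, $\mathrm{R}=\Psi(\mathrm{\mathbf{X}})$, this exponent is exactly $\zeta(\mathrm{\mathbf{X}},\mathrm{\mathbf{X}}')$. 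By the uniqueness of the normal form recorded above, the $\mathrm{SO}(2)$-block in question is $\psi(\mathrm{\mathbf{X}}\mathrm{\mathbf{X}}')$, whence $\psi(\mathrm{\mathbf{X}}\mathrm{\mathbf{X}}')=\rho(\zeta(\mathrm{\mathbf{X}},\mathrm{\mathbf{X}}'))\,\mathrm{r}\,\mathrm{r}'=\rho(\zeta(\mathrm{\mathbf{X}},\mathrm{\mathbf{X}}'))\,\psi(\mathrm{\mathbf{X}})\,\psi(\mathrm{\mathbf{X}}')$, which equals $\psi(\mathrm{\mathbf{X}})\,\psi(\mathrm{\mathbf{X}}')\,\rho(\zeta(\mathrm{\mathbf{X}},\mathrm{\mathbf{X}}'))$ again by commutativity of $\mathrm{SO}(2)$. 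The computation is otherwise routine; the only delicate point is keeping track of which factor of $\mathrm{SO}(2)\times\mathrm{SO}(n+1)$ each quantity lives in, and in particular the fact that it is the $\mathrm{SO}(n+1)$-invariance $\widehat{\mathfrak{a}}(\mathrm{R}\gamma)=\widehat{\mathfrak{a}}(\gamma)$ that makes the $\eta$-contribution depend on $\beta'$ rather than on $\mathrm{R}\beta'\mathrm{r}^{-1}$, matching verbatim the definition of $\zeta$.
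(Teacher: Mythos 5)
Your proposal is correct and follows essentially the same route as the paper's proof: you first establish $\psi(\mathbf{S}(\mathrm{r},\mathrm{R})\,\mathrm{P}(\beta'))=\rho(\eta(\beta',\mathrm{r}))\,\mathrm{r}$ via the $\mathrm{SO}(n+1)$-invariance of $\widehat{\mathfrak{a}}$ and the defining relation of $\eta$, then reduce $\mathbf{X}\mathbf{X}'$ to the normal form $\mathrm{P}(\cdot)\,\mathbf{S}(\cdot,\cdot)$ using $\mathfrak{m}$ and $\Theta$, and read off the $\mathrm{SO}(2)$-block, exactly as in the paper. The added remark on the uniqueness of the decomposition is a harmless (and clarifying) supplement.
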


\begin{proof}
First we show that, for each $\beta \in \Omega_{\mathrm{IV}}$ and $\mathrm{\mathbf{S}}(\mathrm{r},\mathrm{R})\in \mathrm{SO}(2)\times \mathrm{SO}(n+1)$,
\begin{equation}\label{ee1}
\psi\left(\mathrm{\mathbf{S}}(\mathrm{r},\mathrm{R})\, \mathrm{P}(\beta)\right)
=\rho\left(\eta(\beta,\mathrm{r}) \right)\, \mathrm{r}.
     \end{equation}
By Lemma \ref{lemmaa},
$\widehat{\mathfrak{a}}(\mathrm{R}\, \beta)=\widehat{\mathfrak{a}}(\beta)$, for each
$\beta \in \Omega_{\mathrm{IV}}$ and $\mathrm{R}\in \mathrm{SO}(n+1)$.
Arguing as in the proof of Lemma \ref{lemmaa}, from
\begin{equation}\label{ee2}
\mathrm{\mathbf{S}}(\mathrm{r},\mathrm{R})\, \mathrm{P}(\beta)=
   \begin{pmatrix}
    \mathrm{r}\, \widehat{\mathfrak{a}}(\beta) & \mathrm{r}\, \widehat{\mathfrak{b}}(\beta) \\
    \mathrm{R}\, \widehat{\mathfrak{c}}(\beta) & \mathrm{R}\, \widehat{\mathfrak{d}}(\beta) \\
  \end{pmatrix}
\end{equation}
it follows that $\mathrm{\mathbf{S}}(\mathrm{r},\mathrm{R})\, \mathrm{P}(\beta)\in (\pi^-_2)^{-1}(\mathrm{R}\, \beta\, \mathrm{r}^{-1})$.
Therefore, we can write
\begin{equation}\label{ee3}
\mathrm{\mathbf{S}}(\mathrm{r},\mathrm{R})\,\mathrm{P}(\beta)=\mathrm{P}(\mathrm{R}\, \beta\,\mathrm{r}^{-1})\,
  \begin{pmatrix}
    \psi(\mathrm{\mathbf{S}}(\mathrm{r},\mathrm{R})\, \mathrm{P}(\beta)) & 0 \\
    0 & \Psi(\mathrm{\mathbf{S}}(\mathrm{r},\mathrm{R})\, \mathrm{P}(\beta)) \\
  \end{pmatrix}.
\end{equation}
By Lemma \ref{lemmaa}, comparing (\ref{ee2}) and (\ref{ee3}),
we obtain
\[
\begin{split}\psi(\mathrm{\mathbf{S}}(\mathrm{r},\mathrm{R})\, \mathrm{P}(\beta))&=
{\widehat{\mathfrak{a}}(\mathrm{R}\, \beta\,\mathrm{r}^{-1})}^{-1}\, \mathrm{r}\, \widehat{\mathfrak{a}}(\beta))
\\& = {\widehat{\mathfrak{a}}(\beta\, \mathrm{r}^{-1})}^{-1}\, \mathrm{r}\, \widehat{\mathfrak{a}}(\beta)\\ &=
\rho\left(\eta(\beta,\mathrm{r})\right)\, \mathrm{r}.
\end{split}
\]
Let $\mathrm{\mathbf{X}}$ and $\mathrm{\mathbf{X}}'$ be two elements of $\O(2,n+1)$ and write
\[
\mathrm{\mathbf{X}}=
\mathrm{P}(\beta)\,
                  \begin{pmatrix}
                      \psi(\mathrm{\mathbf{X}}) & 0 \\
                      0 & \Psi(\mathrm{\mathbf{X}}) \\
                                      \end{pmatrix},
                                 \quad
\mathrm{\mathbf{X}}'
=\mathrm{P}(\beta')\,
                  \begin{pmatrix}
                                        \psi(\mathrm{\mathbf{X}}') & 0 \\
                                        0 & \Psi(\mathrm{\mathbf{X}}') \\
                                     \end{pmatrix},
      \]
where $\beta = \pi^-_2(\mathrm{\mathbf{X}})$ and $\beta'= \pi^-_2(\mathrm{\mathbf{X}}')$.
By \eqref{ee1} and \eqref{ee3}, we obtain
\[
 \begin{split}
\mathrm{\mathbf{X}}\, \mathrm{\mathbf{X}}' &=
\mathrm{P}(\beta)
                  \left(\begin{smallmatrix}
                          \psi(\mathrm{\mathbf{X}}) & 0 \\
                        0 & \Psi(\mathrm{\mathbf{X}}) \\
                           \end{smallmatrix}\right)
                                \,
\mathrm{P}(\beta')
               \left(\begin{smallmatrix}
                            \psi(\mathrm{\mathbf{X}}') & 0 \\
                            0 & \Psi(\mathrm{\mathbf{X}}') \\
                           \end{smallmatrix}\right)
                                   \\
&= \mathrm{P}(\beta)\, \mathrm{P}(\Psi(\mathrm{\mathbf{X}})\, \beta'\, \psi(\mathrm{\mathbf{X}})^{-1})
\left(\begin{smallmatrix}
          \rho(\eta(\beta', \psi(\mathrm{\mathbf{X}}))\, \psi(\mathrm{\mathbf{X}})\, \psi(\mathrm{\mathbf{X}}') & 0\\
          0 & * \\
        \end{smallmatrix}\right)
 \\
 &=\mathrm{P}\left(\mathfrak{m}(\beta,\Psi(\mathrm{\mathbf{X}})\, \beta'\,\psi(\mathrm{\mathbf{X}})^{-1}) \right)
\left(\begin{smallmatrix}
    \rho(\Theta(\beta,\Psi(\mathrm{\mathbf{X}})\,\beta'\,
\psi(\mathrm{\mathbf{X}})^{-1})+\eta(\beta', \psi(\mathrm{\mathbf{X}}))\,
\psi(\mathrm{\mathbf{X}})\, \psi(\mathrm{\mathbf{X}}') & 0 \\
    0 & * \\
  \end{smallmatrix}\right) \\
& =\mathrm{P}\left(\mathfrak{m}(\beta,\Psi(\mathrm{\mathbf{X}})\, \beta'\, \psi(\mathrm{\mathbf{X}})^{-1}) \right)
\left(\begin{smallmatrix}
    \rho(\zeta(\mathrm{\mathbf{X}},\mathrm{\mathbf{X}}'))\, \psi(\mathrm{\mathbf{X}})\, \psi(\mathrm{\mathbf{X}}') & 0 \\
    0 & * \\
  \end{smallmatrix}\right).
\end{split}
\]
This implies that
\[
\psi(\mathrm{\mathbf{X}}\, \mathrm{\mathbf{X}}')=\psi(\mathrm{\mathbf{X}})\, \psi(\mathrm{\mathbf{X}}')\,
\rho(\zeta(\mathrm{\mathbf{X}},\mathrm{\mathbf{X}}')),
\]
as claimed. \end{proof}

\begin{lemmaA}\label{lemma:A3}
%
Let $\OO(2,n+1)$ be defined as above. The multiplication
\[
(\mathrm{\mathbf{X}},\tau)\star (\mathrm{\mathbf{X}}',\tau')=(\mathrm{\mathbf{X}}\, \mathrm{\mathbf{X}}',\tau+\tau'+
\zeta(\mathrm{\mathbf{X}},\mathrm{\mathbf{X}}'))
   \]
defines a Lie group structure on $\OO(2,n+1)$ with neutral element $\mathbf{1}=({I}_{n+3},0)$ and inverse
$(\mathrm{\mathbf{X}},\tau)^{-1}=(\mathrm{\mathbf{X}}^{-1},-\tau-\zeta(\mathrm{\mathbf{X}},\mathrm{\mathbf{X}}^{-1}))$.
\end{lemmaA}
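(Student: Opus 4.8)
The plan is to verify the group axioms for $\star$ one at a time, the principal tool being Lemma \ref{lemma:A2}, together with the facts that $\rho:\R\to\mathrm{SO}(2)$ is a surjective group homomorphism with $\ker\rho=2\pi\Z$ and that $\OO(2,n+1)$ is an embedded submanifold of $\O(2,n+1)\times\R$ (Lemma \ref{lemma:A1}). For closure I would note that if $\psi(\mathbf X)=\rho(\tau)$ and $\psi(\mathbf X')=\rho(\tau')$, then Lemma \ref{lemma:A2} and the homomorphism property of $\rho$ give
\[
\psi(\mathbf X\,\mathbf X')=\psi(\mathbf X)\,\psi(\mathbf X')\,\rho(\zeta(\mathbf X,\mathbf X'))=\rho\bigl(\tau+\tau'+\zeta(\mathbf X,\mathbf X')\bigr),
\]
so that $(\mathbf X\,\mathbf X',\tau+\tau'+\zeta(\mathbf X,\mathbf X'))\in\OO(2,n+1)$ and $\star$ is a well-defined binary operation on $\OO(2,n+1)$.

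The heart of the proof is associativity, which --- the $\O(2,n+1)$-components multiplying associatively --- comes down to the $2$-cocycle identity
\[
\zeta(\mathbf X\,\mathbf X',\mathbf X'')+\zeta(\mathbf X,\mathbf X')=\zeta(\mathbf X,\mathbf X'\,\mathbf X'')+\zeta(\mathbf X',\mathbf X''),\qquad \mathbf X,\mathbf X',\mathbf X''\in\O(2,n+1).
\]
I would prove it by computing $\psi(\mathbf X\,\mathbf X'\,\mathbf X'')$ in two ways with Lemma \ref{lemma:A2}: grouping as $(\mathbf X\,\mathbf X')\,\mathbf X''$ gives, since $\mathrm{SO}(2)$ is abelian, $\psi(\mathbf X)\psi(\mathbf X')\psi(\mathbf X'')\,\rho\bigl(\zeta(\mathbf X,\mathbf X')+\zeta(\mathbf X\,\mathbf X',\mathbf X'')\bigr)$, while grouping as $\mathbf X\,(\mathbf X'\,\mathbf X'')$ gives $\psi(\mathbf X)\psi(\mathbf X')\psi(\mathbf X'')\,\rho\bigl(\zeta(\mathbf X',\mathbf X'')+\zeta(\mathbf X,\mathbf X'\,\mathbf X'')\bigr)$. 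Equating and using $\ker\rho=2\pi\Z$ shows that the continuous function
\[
D(\mathbf X,\mathbf X',\mathbf X''):=\zeta(\mathbf X\,\mathbf X',\mathbf X'')+\zeta(\mathbf X,\mathbf X')-\zeta(\mathbf X,\mathbf X'\,\mathbf X'')-\zeta(\mathbf X',\mathbf X'')
\]
takes values in $2\pi\Z$. Since $\O(2,n+1)$ is connected, so is the domain of $D$, hence $D$ is constant; and since the four terms cancel at $(I_{n+3},I_{n+3},I_{n+3})$ we conclude $D\equiv0$, i.e.\ the cocycle identity.

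I would then handle the unit and inverses. From $\mathfrak c(I_{n+3})=0$ one gets $\pi^-_2(I_{n+3})=\mathrm O_{\mathrm{IV}}$, and since the frame $\mathrm B(\mathrm O_{\mathrm{IV}})$ equals $I_{n+3}$, which already belongs to $\O(2,n+1)$, the Gram--Schmidt correction is trivial and $\mathrm P(\mathrm O_{\mathrm{IV}})=I_{n+3}$; hence $\psi(I_{n+3})=I_2=\rho(0)$ and $\mathbf 1:=(I_{n+3},0)\in\OO(2,n+1)$. Specializing the cocycle identity to $\mathbf X=\mathbf X'=I_{n+3}$, respectively $\mathbf X'=\mathbf X''=I_{n+3}$, shows that $\zeta(I_{n+3},\,\cdot\,)$ and $\zeta(\,\cdot\,,I_{n+3})$ are both identically equal to $\zeta(I_{n+3},I_{n+3})=\Theta(\mathrm O_{\mathrm{IV}},\mathrm O_{\mathrm{IV}})+\eta(\mathrm O_{\mathrm{IV}},I_2)$, which is $0$ by the normalizations of $\Theta$ and of $\eta$ (Lemma \ref{lemmaa}); hence $\mathbf 1\star(\mathbf X,\tau)=(\mathbf X,\tau)=(\mathbf X,\tau)\star\mathbf 1$. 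For inverses, Lemma \ref{lemma:A2} applied to $\mathbf X\,\mathbf X^{-1}=I_{n+3}$ yields $\psi(\mathbf X^{-1})=\rho\bigl(-\tau-\zeta(\mathbf X,\mathbf X^{-1})\bigr)$, so $(\mathbf X^{-1},-\tau-\zeta(\mathbf X,\mathbf X^{-1}))\in\OO(2,n+1)$, and a direct check gives $(\mathbf X,\tau)\star(\mathbf X^{-1},-\tau-\zeta(\mathbf X,\mathbf X^{-1}))=\mathbf 1$; as every element thus has a right inverse, and right inverses in an associative monoid are two-sided, $\OO(2,n+1)$ is a group. Smoothness of $\star$ and of the inversion map follows from Lemma \ref{lemma:A1} and the smoothness of matrix multiplication, matrix inversion, and of $\zeta$ (assembled from the smooth maps $\pi^-_2,\psi,\Psi,\Theta,\eta$), so $\OO(2,n+1)$ is a Lie group.

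The only genuinely substantive step is the $2$-cocycle identity for $\zeta$, and the main obstacle there is that $\zeta$ has no tractable closed form. The trick used above is to avoid computing directly with $\mathfrak m$, $\Theta$ and $\eta$: one establishes the identity modulo $2\pi\Z$ via Lemma \ref{lemma:A2} and then removes the residual integer ambiguity by connectedness. The single point that requires care is that $D$ be genuinely continuous, i.e.\ that $\zeta$ be continuous; this in turn rests on $\Theta$ and $\eta$ being continuous, which holds because they are the unique normalized lifts of smooth maps through the covering $\rho$ over the simply connected, respectively contractible, base spaces involved.
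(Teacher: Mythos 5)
Your proof is correct, and it rests on the same underlying mechanism as the paper's --- a continuous function forced by Lemma A2 (or by $\sigma$ being a covering of an honest group) to take values in $2\pi\Z$ is constant on a connected domain and vanishes at the identity --- but you organize the argument around a different key statement. You first extract from Lemma A2 the explicit $2$-cocycle identity
$\zeta(\mathbf X\,\mathbf X',\mathbf X'')+\zeta(\mathbf X,\mathbf X')=\zeta(\mathbf X,\mathbf X'\,\mathbf X'')+\zeta(\mathbf X',\mathbf X'')$
on $\O(2,n+1)^3$, and then the unit law ($\zeta(I,\cdot)=\zeta(\cdot,I)=0$), associativity, and the two-sidedness of inverses all follow formally; the paper never states this identity and instead runs the connectedness-plus-integrality argument three separate times, for the unit defect $k$, the left-versus-right inverse defect $\widetilde\zeta$, and the associativity defect $\Xi$, each defined on $\OO(2,n+1)$ itself. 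One consequence of the reorganization is a cleaner logical order: the paper's $\Xi$ is built using the inverse operation, so it must establish inverses before associativity, whereas you prove associativity first and then dispose of inverses with the standard fact that a monoid in which every element has a right inverse is a group. Your version also has the merit of making the central-extension structure of $\OO(2,n+1)$ explicit ($\zeta$ is literally a normalized $2$-cocycle), which is the conceptual content of Remark \ref{RM}; the price is that you must check the normalizations $\mathrm P(\mathrm O_{\mathrm{IV}})=I_{n+3}$ and $\zeta(I,I)=0$ explicitly, which you do correctly (and which the paper also uses, tacitly, in asserting $k(\mathbf 1)=0$). Your closing remark about the continuity of $\Theta$ and $\eta$ correctly identifies the only point where the argument could silently fail.
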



\begin{proof}
First, we show that
$\mathbf{1}=({I}_{n+3},0)$
is a neutral element for the multiplication.
By definition, $\mathbf{1}\star (\mathrm{\mathbf{X}},\tau)\in \sigma^{-1}(\mathrm{\mathbf{X}})$, for each $(\mathrm{\mathbf{X}},\tau)\in \OO(2,n+1)$.
Then we can write $\mathbf{1}\star (\mathrm{\mathbf{X}},\tau)=(\mathrm{\mathbf{X}},\tau
+ 2\pi k(\mathrm{\mathbf{X}},\tau))$, where $k:\OO(2,n+1)\to \R$ is a smooth map with integral values.
Since $\OO(2,n+1)$ is connected, $k$ is constant. By construction, $k(\mathbf{1})=0$.
This implies that  $\mathbf{1}$ is a left neutral element.
The same reasoning shows that $\mathbf{1}$ is also a right neutral element.
Next, we prove that each $(\mathrm{\mathbf{X}},\tau)\in \OO(2,n+1)$ has a right inverse. Choose and fix $(\mathrm{\mathbf{X}}^{-1},\tau')\in \sigma^{-1}(\mathrm{\mathbf{X}}^{-1})$. Then
\[
(\mathrm{\mathbf{X}},\tau)\star (\mathrm{\mathbf{X}}^{-1},\tau') = ({I}_{n+3},\tau+\tau'+\zeta(\mathrm{\mathbf{X}},
\mathrm{\mathbf{X}}^{-1}))\in \sigma^{-1}({I}_{n+3}).
  \]
This implies that $\tau+\tau'+\zeta(\mathrm{\mathbf{X}},\mathrm{\mathbf{X}}^{-1}) = 2\pi m$, for some $m\in \Z$,
from which it follows that $(\mathrm{\mathbf{X}},\tau)$ $\star$ $(\mathrm{\mathbf{X}}^{-1},\tau'-2\pi m)=\mathbf{1}$.
We now prove that the right inverse is also a left inverse. Let $(\mathrm{\mathbf{X}},\tau)\in \OO(2,n+1)$ and let
$(\mathrm{\mathbf{X}}^{-1},\tau')$ be a right inverse of $(\mathrm{\mathbf{X}},\tau)$. We then have $\tau'=-\tau-\zeta(\mathrm{\mathbf{X}},\mathrm{\mathbf{X}}^{-1})$, and hence
\[
(\mathrm{\mathbf{X}}^{-1},\tau')\star (\mathrm{\mathbf{X}},\tau) =
 ({I}_{n+3},\zeta(\mathrm{\mathbf{X}}^{-1},\mathrm{\mathbf{X}})-
 \zeta(\mathrm{\mathbf{X}},\mathrm{\mathbf{X}}^{-1})).
\]
This implies that the image of the smooth map
\[
\widetilde{\zeta}: \O(2,n+1)\ni \mathrm{\mathbf{X}}\longmapsto \zeta(\mathrm{\mathbf{X}}^{-1},\mathrm{\mathbf{X}})-
 \zeta(\mathrm{\mathbf{X}},\mathrm{\mathbf{X}}^{-1})\in \R
  \]
 belongs to $2\pi \Z$, from which it follows that $\widetilde{\zeta}$ is constant.
Since $\widetilde{\zeta}({I}_{n+3})=0$, we conclude that $\widetilde{\zeta}$ vanishes identically.
Thus $(\mathrm{\mathbf{X}}^{-1},\tau')$ is a left inverse and the map
\[
 \OO(2,n+1)\ni (\mathrm{\mathbf{X}},\tau) \longmapsto (\mathrm{\mathbf{X}},\tau)^{-1}
 = (\mathrm{\mathbf{X}}^{-1},-\tau-\zeta(\mathrm{\mathbf{X}},\mathrm{\mathbf{X}}^{-1}))
 \in \OO(2,n+1)
\]
 is differentiable. We now prove that $\star$ is associative. Let $(\mathrm{\mathbf{X}}',\tau')$
 and $(\mathrm{\mathbf{X}}'',\tau'')$ be two elements of $\OO(2,n+1)$. Let
 $\Xi :\OO(2,n+1)\to \OO(2,n+1)$ be defined by
\[
\Xi (\mathrm{\mathbf{X}},\tau)= \left(((\mathrm{\mathbf{X}}',\tau')\star (\mathrm{\mathbf{X}}'',\tau'') )\star (\mathrm{\mathbf{X}},\tau)\right)\star
 \left((\mathrm{\mathbf{X}}',\tau')\star((\mathrm{\mathbf{X}}'',\tau'')\star (\mathrm{\mathbf{X}},\tau)) \right)^{-1}.
   \]
Since the multiplication of $\O(2,n+1)$ is associative,
$\Xi (\mathrm{\mathbf{X}},\tau)$ belongs to $\sigma^{-1}({I}_{n+3})$. We can
thus write $\Xi (\mathrm{\mathbf{X}},\tau)=({I}_{n+3},h(\mathrm{\mathbf{X}},\tau))$, where
$h : \OO(2,n+1)\to \R$ is a smooth function taking values in $2\pi \Z$.
This implies that $h$ is constant. On the other hand, $h(\mathbf{1})=0$, which
implies $\Xi=\mathbf{1}$, and hence the associativity of the product.
This concludes the proof.
\end{proof}


\begin{lemmaA}\label{lemma:A4}
Let $\widehat{\mathrm{Z}}(2,n+1)$ denote the center of $\OO(2,n+1)$. Then,
\begin{itemize}
\item 
$\widehat{\mathrm{Z}}(2,n+1)=\left\{(I,2\pi k)\mid k\in \Z \right\}$, if $n$ is even;

\item  
$\widehat{\mathrm{Z}}(2,n+1)=\left\{((-1)^kI,\pi k)\mid k\in \Z\right\}$, if $n$ is odd.
\end{itemize}
\end{lemmaA}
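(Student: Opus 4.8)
The plan is to reduce the computation of $\widehat{\mathrm{Z}}(2,n+1)$ to that of the center $\mathrm{Z}(\O(2,n+1))$ of the base group, and then to determine the latter by an elementary argument together with a parity count.

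By Lemmas~\ref{lemma:A1} and~\ref{lemma:A3} the projection $\sigma\colon\OO(2,n+1)\to\O(2,n+1)$ is a surjective homomorphism of Lie groups which is a local diffeomorphism, so $\mathrm{K}:=\ker\sigma$ is a discrete normal, hence central, subgroup of the connected group $\OO(2,n+1)$. Since $\pi^-_2(I_{n+3})=\mathrm{O}_{\mathrm{IV}}$ and $\mathrm{P}(\mathrm{O}_{\mathrm{IV}})=I_{n+3}$, the defining relation for $\psi$ gives $\psi(I_{n+3})=I_2$, whence $\mathrm{K}=\{(I_{n+3},2\pi k)\mid k\in\Z\}$. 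I would then prove that $\widehat{\mathrm{Z}}(2,n+1)=\sigma^{-1}\bigl(\mathrm{Z}(\O(2,n+1))\bigr)$. For a fixed $(\mathbf{X},\tau)\in\OO(2,n+1)$, the continuous map $\OO(2,n+1)\to\OO(2,n+1)$ sending $(\mathbf{Y},s)$ to the commutator $(\mathbf{X},\tau)\star(\mathbf{Y},s)\star(\mathbf{X},\tau)^{-1}\star(\mathbf{Y},s)^{-1}$ takes values in $\mathrm{K}$ exactly when $\mathbf{X}$ commutes with every element of $\O(2,n+1)$; and when this happens, that map --- continuous, identity at the identity, with connected source and discrete target $\mathrm{K}$ --- is constantly the identity, so $(\mathbf{X},\tau)\in\widehat{\mathrm{Z}}(2,n+1)$. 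Conversely, applying $\sigma$ shows that centrality of $(\mathbf{X},\tau)$ forces $\mathbf{X}\in\mathrm{Z}(\O(2,n+1))$.

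Next I would show $\mathrm{Z}(\O(2,n+1))=\{I_{n+3}\}$ if $n$ is even and $\{\pm I_{n+3}\}$ if $n$ is odd. For the inclusion ``$\subseteq$'' I would use the transitive action of $\O(2,n+1)$ on $\NG2$ with isotropy $\mathrm{SO}(2)\times\mathrm{SO}(n+1)$ at the negative $2$-plane $\V_0:=\mathbf{j}(\mathrm{O}_{\mathrm{IV}})$: a central element $\mathbf{Z}$ normalizes this isotropy group, so $\mathbf{Z}\cdot\V_0$ is again fixed by $\mathrm{SO}(2)\times\mathrm{SO}(n+1)$; since $n+1\ge 3$, the only $\mathrm{SO}(n+1)$-invariant $2$-plane in $\R^{2,n+1}$ is $\V_0$ itself, hence $\mathbf{Z}\cdot\V_0=\V_0$ and, by equivariance, $\mathbf{Z}$ fixes every negative $2$-plane. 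Writing a negative line as the intersection of two distinct negative $2$-planes containing it, one gets that $\mathbf{Z}$ fixes every negative line, hence acts trivially on a nonempty open subset of $\RP^{n+2}$, so $\mathbf{Z}=cI_{n+3}$ with $c\in\R$; then $\mathbf{Z}\in\O(2,n+1)$ forces $c=\pm1$. For ``$\supseteq$'': $I_{n+3}$ is always central, and $-I_{n+3}$ commutes with everything and lies in the identity component $\O(2,n+1)$ precisely when $\det(-I_{n+3})=(-1)^{n+3}=1$, i.e. when $n$ is odd. (A Grassmannian-free variant of ``$\subseteq$'': a central $\mathbf{Z}$ commutes with the block subgroup isomorphic to $\mathrm{SO}(n+1)$, so Schur's lemma and the orthogonality relations force $\mathbf{Z}=\left(\begin{smallmatrix}\rho(\varphi)&0\\0&\lambda I_{n+1}\end{smallmatrix}\right)$ with $\lambda=\pm1$; commuting in addition with a hyperbolic rotation mixing a ``positive'' and a ``negative'' basis vector forces $\sin\varphi=0$ and $\lambda=\cos\varphi$, so $\mathbf{Z}=\pm I_{n+3}$.)

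Finally I would assemble the two computations. If $n$ is even, $\widehat{\mathrm{Z}}(2,n+1)=\sigma^{-1}(I_{n+3})=\mathrm{K}=\{(I_{n+3},2\pi k)\mid k\in\Z\}$. If $n$ is odd, then $-I_{n+3}\in\O(2,n+1)$ and, exactly as for $I_{n+3}$, one finds $\psi(-I_{n+3})=-I_2=\rho(\pi)$, so $\sigma^{-1}(-I_{n+3})=\{(-I_{n+3},(2k+1)\pi)\mid k\in\Z\}$; hence $\widehat{\mathrm{Z}}(2,n+1)=\sigma^{-1}(I_{n+3})\cup\sigma^{-1}(-I_{n+3})=\{((-1)^kI_{n+3},\pi k)\mid k\in\Z\}$, as claimed. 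The only genuinely substantive step is the determination of $\mathrm{Z}(\O(2,n+1))$; within it, the points needing care are the irreducibility of the standard $\mathrm{SO}(n+1)$-module (available since $n\ge 2$ gives $n+1\ge 3$) and the parity condition deciding when $-I_{n+3}$ lies in the identity component. Everything else is routine covering-group bookkeeping.
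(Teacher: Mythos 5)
Your proof is correct, and its core mechanism is the same as the paper's: both reduce the problem to the center of the base group $\O(2,n+1)$ and then use the fact that a continuous map from the connected group $\OO(2,n+1)$ into a discrete fiber of $\sigma$ must be constant. The paper does this one element at a time, showing each candidate $(I,2\pi k)$ (resp.\ $((-1)^k I,\pi k)$) is central via the conjugation map $f_k$; you package the same discreteness--connectedness argument once and for all as the equality $\widehat{\mathrm{Z}}(2,n+1)=\sigma^{-1}\bigl(\mathrm{Z}(\O(2,n+1))\bigr)$, which is slightly cleaner. The genuine added value of your write-up is that you actually prove the statement the paper merely asserts, namely that $\mathrm{Z}(\O(2,n+1))$ is trivial for $n$ even and $\{\pm I_{n+3}\}$ for $n$ odd; both your Grassmannian argument (central elements fix the unique $\mathrm{SO}(n+1)$-invariant negative $2$-plane, hence by equivariance every point of $\NG2$, hence every negative line) and your Schur-plus-boost variant are sound, the key input in each being irreducibility of the standard $\mathrm{SO}(n+1)$-module for $n+1\ge 3$. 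One small point to tighten: membership of $-I_{n+3}$ in the identity component is not in general equivalent to $\det=1$ (since $\mathrm{SO}(2,n+1)$ has two components); you should say instead that the timelike block of $-I_{n+3}$ is $-I_2\in\mathrm{SO}(2)$, so the only obstruction is the sign of $\det(-I_{n+1})=(-1)^{n+1}$, which is positive exactly when $n$ is odd --- this yields the same parity conclusion.
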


%

\begin{proof}
If $n$ is even, the center $\mathrm{Z}(2,n+1)$ of $\O(2,n+1)$ is trivial, while if $n$
is odd, the center is $\{\pm I_{n+3}\}$. Thus, if $n$ is even,
$\sigma^{-1}(\mathrm{Z}(2,n+1))= \left\{(I,2\pi k) \mid k\in \Z\right\}$,
 and if $n$ is odd, $\sigma^{-1}(\mathrm{Z}(2,n+1))= \left\{((-1)^kI,\pi k) \mid k\in \Z\right\}$.
By construction, $\widehat{\mathrm{Z}}(2,n+1)\subset \sigma^{-1}(\mathrm{Z}(2,n+1))$.
Suppose first that $n$ is even.
Take $(I,2\pi k)$, $k\in \Z$, and consider the smooth map
\[
   f_k : \OO(2,n+1)\ni  (\mathrm{\mathbf{X}},\tau) \longmapsto (\mathrm{\mathbf{X}},\tau)^{-1}\star (I,2\pi k)
 \star (\mathrm{\mathbf{X}},\tau)\in \OO(2,n+1).
   \]
Since $\sigma\circ f_k=I$, the image $\mathrm{Im}(f_k)\subset \left\{(I,2\pi m)\mid m\in \Z\right\}$,
from which it follows that
$f_k$ is constant. On the other hand, $f_k(I,0)=(I,2\pi k)$, and hence
$f_k(\mathrm{\mathbf{X}},\tau) = (I,2\pi k)$.
This implies that $(I,2\pi k)$ belongs to the center of $\OO(2,n+1)$,
which shows that $\widehat{\mathrm{Z}}(2,n+1)=\left\{(I,2\pi k)\mid k\in \Z \right\}$,
as claimed.
%
Suppose now that $n$ is odd. Arguing as above, we can show that
$\left\{(I,2\pi m)\mid m\in \Z\right\}\subset \widehat{\mathrm{Z}}(2,n+1)$. Next, take $(-I,\pi(1+2k))$,
$k\in \Z$, $k\neq 0$. Then the smooth map
\[
  \widetilde{f}_k : \OO(2,n+1)\ni (\mathrm{X},\tau)\longmapsto (\mathrm{\mathbf{X}},\tau)^{-1}\star (-I,\pi (1+2k))
 \star (\mathrm{\mathbf{X}},\tau)\in \OO(2,n+1)
    \]
 covers the constant map $-I$. Thus
 $\mathrm{Im}(\widetilde{f}_k)\subset \left\{(-I,\pi (1+2m))\mid m\in \Z\right\}$ and hence
$\widetilde{f}_k$ is constant. On the other hand, $\widetilde{f}_k(I,0)= (-I,\pi (1+2k))$, which implies $\widetilde{f}_k(\mathrm{\mathbf{X}},\tau)=  (-I,\pi (1+2k))$, for every $(\mathrm{\mathbf{X}},\tau)\in \OO(2,n+1)$.
From this it follows that $\left\{(-I,\pi (1+2k))\mid k\in \Z\right\}\subset \widehat{\mathrm{Z}}(2,n+1)$.
\end{proof}


The four lemmas combine to give the proof of Theorem \ref{ThmA}.


\begin{remark}\label{RM}
Let $\mathrm{s}:\mathrm{Spin}(2,n+1)\to \O(2,n+1)$ be the 2:1 spin covering group of $\O(2,n+1)$ (cf. \cite{LM}). Then, the universal covering group of $\O(2,n+1)$ is the embedded submanifold of $\mathrm{Spin}(2,n+1)\times \R$ defined by
\[
\widehat{\mathrm{S}}(2,n+1)=\left\{(\mathbf{X},\tau)\in \mathrm{Spin}(2,n+1)\times \R \mid
\psi(s(\mathbf{X}))= \rho(\tau)\right\}
 \]
 with the multiplication
 $(\mathbf{X},\tau)\ast (\mathbf{X}',\tau')=(\mathbf{X}\mathbf{X}',\tau+\tau'+\zeta(s(\mathbf{X}),s(\mathbf{X}')))$.
 Note that $\widehat{\mathrm{S}}(2,3)$ is the universal covering group of $\mathrm{Sp}(4,\R)$ while
 $\widehat{\mathrm{S}}(2,4)$ is the universal covering group of $\mathrm{SU}(2,2)$.
In particular, this implies that $\OO(2,n+1)$ cannot have a finite dimensional matrix representation
(see, for instance, \cite{Ra}).
\end{remark}

\section{Lorentz manifolds with conformal group of maximal dimension}\label{s:4}

In this section, we prove that the central extension $\OO(2,n+1)$ of $\O(2,n+1)$ is isomorphic
to the restricted conformal group of
the Einstein static universe and show that the integral compact forms of the first and second kind
admit a restricted conformal
transformation group of maximal dimension.
%
%
Conversely, we prove that a conformal Lorentz manifold of dimension $n+1 \geq 3$ with restricted conformal group
of maximal dimension is conformally equivalent to either the Einstein static universe, or to one of its
integral compact forms.

\subsection{The restricted conformal group of $\E$ and its integral compact forms}

\begin{defn}\label{d:ThTh'}
Let define
\begin{itemize}
\item $\mathcal{T}_h=\left\{(I_{n+3},2\pi hk) \mid k\in \Z\right\}$, for any integer $h\geq 1$,
\item  $\mathcal{T}'_h=\left\{\left((-1)^k I_{n+3},\pi (2h+1)k\right) \mid k\in \Z\right\}$,
for any integer $h\geq 0$ and $n$ odd.
\end{itemize}
Both $\mathcal{T}_h$ and $\mathcal{T}'_h$ are discrete subgroups of the center $\widehat{\mathrm{Z}}(2,n+1)$
of $\OO(2,n+1)$. Let
\[
  \widehat{\mathrm{O}}^{\uparrow,\mathrm{I}}_{+,h}(2,n+1) :=\OO(2,n+1)/\mathcal{T}_h,
  \quad \widehat{\mathrm{O}}^{\uparrow,\mathrm{II}}_{+,h}(2,n+1) :=\OO(2,n+1)/\mathcal{T}'_h
    \]
be the corresponding quotient Lie groups.
\end{defn}


We now state the second main result of the paper.

\begin{thmx}\label{ThmB}

$\mathrm(1)$ The restricted conformal group of the Einstein static universe $\E$ is isomorphic to $\OO(2,n+1)$.

$\mathrm(2)$ The restricted conformal group of the integral compact form of the first kind with index $h$ is isomorphic to
$\widehat{\mathrm{O}}^{\uparrow,\mathrm{I}}_{+,h}(2,n+1)$.

$\mathrm(3)$ The restricted conformal group of the integral compact form of the second kind with index $h$ is isomorphic
to $\widehat{\mathrm{O}}^{\uparrow,\mathrm{II}}_{+,h}(2,n+1)$.
\end{thmx}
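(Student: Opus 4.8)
The plan is to leverage Theorem~\ref{THM0} together with the explicit model of $\OO(2,n+1)$ built in Theorem~\ref{ThmA}, so that the whole argument reduces to identifying the Cartan conformal bundle $\mathrm{Q}$ of each space with a concrete Lie group and checking that the normal parallelism is the left-invariant one. For part $(1)$, I would first produce an effective action of $\OO(2,n+1)$ on $\E$ by restricted conformal transformations. The natural candidate comes from the identification of $\EC$ with $\mathcal{N}^+_1(\R^{2,n+1})$ (Example~\ref{cpt-EU}): the group $\O(2,n+1)$ acts conformally on $\EC\cong\SS^1\times\SS^n$, and lifting this action to the universal cover $\E\cong\R\times\SS^n$ amounts exactly to ``unwinding'' the $\SS^1$-factor. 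Since $\widehat{\mathfrak a}$ and the cocycle $\zeta$ were designed precisely to track how the $\mathrm{SO}(2)$-part of an element of $\O(2,n+1)$ rotates the distinguished circle direction (this is the content of Lemma~\ref{lemma:A2}), the extra $\R$-coordinate $\tau$ in $\OO(2,n+1)$ records the lift of that rotation to the real line, and one gets a well-defined smooth action $\OO(2,n+1)\times\E\to\E$ covering the $\O(2,n+1)$-action on $\EC$. One must check this action is effective: an element acting trivially on $\E$ projects to an element of $\O(2,n+1)$ acting trivially on $\EC$, hence lies in the kernel of $\O(2,n+1)\to\mathcal{C}^\uparrow_+(\EC)$, which is trivial by Example~\ref{cpt-EU}; pulling back through $\sigma$ and using the description of $\widehat{\mathrm Z}(2,n+1)$ in Theorem~\ref{ThmA} together with the fact that the generator $(I,2\pi)$ acts as the deck transformation $\mathrm T_{2\pi}$ (nontrivially) on $\E$ pins down effectiveness.

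Once the effective action is in place, $\dim\OO(2,n+1)=\dim\O(2,n+1)=\tfrac12(n+3)(n+2)=N$, which is the maximal value allowed by Theorem~\ref{THM0}. Therefore the hypothesis $\dim\mathrm G=\tfrac12(n+3)(n+2)$ of that theorem is met with $\mathrm G=\OO(2,n+1)$, and conclusion $(1)$ of Theorem~\ref{THM0} gives immediately that $\mathrm Q(\E)$ is diffeomorphic to $\OO(2,n+1)$ and that both are isomorphic to $\mathcal{C}^\uparrow_+(\E)$. This already proves part $(1)$ of Theorem~\ref{ThmB}; no separate verification that the normal parallelism is left-invariant is needed, since Theorem~\ref{THM0} does that work for us. It remains only to know that $\mathcal{C}^\uparrow_+(\E)$ is connected (so that it equals the image of the connected group $\mathrm G$); this follows because $\E$ is conformally flat and simply connected, so its restricted conformal group is connected by the general structure theory recalled in Section~\ref{s:1}, or directly because $\mathcal{C}^\uparrow_+(\E)\cong\OO(2,n+1)$ which is connected by Lemma~\ref{lemma:A1}.

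For parts $(2)$ and $(3)$ I would descend the picture through the covering maps $\pi_{\mathrm I,h}:\E\to\mathcal E^{1,n}_{\mathrm I,h}$ and $\pi_{\mathrm{II},h}:\E\to\mathcal E^{1,n}_{\mathrm{II},h}$. A restricted conformal transformation of the quotient lifts to a restricted conformal transformation of $\E$ that normalizes the deck group, and conversely a restricted conformal transformation of $\E$ descends iff it commutes with the deck group; since $\mathcal{C}^\uparrow_+(\E)\cong\OO(2,n+1)$ and the deck group of $\mathcal E^{1,n}_{\mathrm I,h}$ is generated by $\mathrm T_{2\pi h}$, which corresponds under the isomorphism to the central element $(I,2\pi h)$ (and, for the second kind, $\mathrm T'_{(2k+1)\pi}\leftrightarrow(-I,\pi(2h+1))$), the normalizer is all of $\OO(2,n+1)$ and the subgroup acting trivially is exactly the cyclic central subgroup generated by that element, namely $\mathcal T_h$ (resp.\ $\mathcal T'_h$) of Definition~\ref{d:ThTh'}. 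Hence $\mathcal{C}^\uparrow_+(\mathcal E^{1,n}_{\mathrm I,h})\cong\OO(2,n+1)/\mathcal T_h=\widehat{\mathrm O}^{\uparrow,\mathrm I}_{+,h}(2,n+1)$ and similarly for the second kind. Note in passing that since $\mathcal T_h$ and $\mathcal T'_h$ are discrete, these quotients have the same dimension $N$, recovering the ``maximal dimension'' assertion mentioned in the introduction.

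The main obstacle I anticipate is the bookkeeping in the first paragraph: precisely matching the cocycle $\zeta$ and the map $\psi$ to the geometry of lifting rotations of the $\SS^1$-factor of $\EC$ to translations of $\E$, and verifying that the resulting map $\OO(2,n+1)\times\E\to\E$ is genuinely a group action (associativity of the action translating the twisted multiplication $\star$ of Theorem~\ref{ThmA} into honest composition of diffeomorphisms) and is effective. Everything after that is a formal consequence of Theorem~\ref{THM0} and of the center computation in Theorem~\ref{ThmA}; the covering-space arguments in parts $(2)$ and $(3)$ are routine once the correspondence between deck transformations and central elements is made explicit.
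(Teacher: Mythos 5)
Your proposal follows essentially the same route as the paper: the paper likewise constructs a lifted action $\widehat{\mathrm{L}}$ of $\OO(2,n+1)$ on $\E$ covering the $\O(2,n+1)$-action on $\EC$ (using the explicit map $\xi$ and its lift through $\rho$), verifies in Lemma~B\ref{lemma:B1} --- by the rigidity of deck transformations along paths to the identity in $\OO(2,n+1)$ --- that this is an effective action by restricted conformal transformations (exactly the ``bookkeeping'' you flag as the main obstacle), and then concludes via the dimension count in Theorem~\ref{THM0}. The only divergence is in parts (2)--(3), where the paper descends the action to the quotients and applies Theorem~\ref{THM0} once more, whereas you lift transformations from the quotient and identify the kernel with the central deck group $\mathcal{T}_h$ (resp.\ $\mathcal{T}'_h$); both arguments are valid and of comparable length.
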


\begin{proof}
Consider the left action $\mathrm{L} : \O(2,n+1)\times \EC \to \EC$
of $\O(2,n+1)$ on the standard compact form of
the first kind $\EC$ defined by
\[
  \mathrm{L}_{\mathrm{\mathbf{X}}}(\mathrm{x},\mathrm{y})=
 \left(\frac{\mathfrak{a}(\mathrm{\mathbf{X}})\, \mathrm{x}+\mathfrak{b}(\mathrm{\mathbf{X}})\, \mathrm{y}}
 {\|\mathfrak{a}(\mathrm{\mathbf{X}})\, \mathrm{x}+\mathfrak{b}(\mathrm{\mathbf{X}})\, \mathrm{y} \|},\frac{\mathfrak{c}(\mathrm{\mathbf{X}})\, \mathrm{x}+\mathfrak{d}(\mathrm{\mathbf{X}})\, \mathrm{y}}
 {\|\mathfrak{c}(\mathrm{\mathbf{X}})\, \mathrm{x}+\mathfrak{d}(\mathrm{\mathbf{X}})\, \mathrm{y} \|}
\right).
 \]
 The deck transformations of the covering $\pi_{\mathrm{I},1}:\E\to \EC$ are the translations
 \[
   \mathrm{T}_k :  \E\ni (\tau,\mathrm{y}) \longmapsto (\tau+2\pi k,\mathrm{y})\in \E,\quad k\in \Z.
     \]
 Let $\xi : \Omega_{\mathrm{IV}}\times \R\times \E\to \SS^1$
 be the smooth map defined by
 \begin{equation}\label{xi}
 \xi\left(\beta,\vartheta,(\tau,\mathrm{y})\right)=
 \frac{\widehat{\mathfrak{a}}(\beta)\, \rho(\vartheta)\, \rho_1(\tau)+\widehat{\mathfrak{b}}(\beta)\, \mathrm{y}}
 {\|\widehat{\mathfrak{a}}(\beta)\, \rho(\vartheta)\, \rho_1(\tau)+\widehat{\mathfrak{b}}(\beta)\, \mathrm{y} \|}.
 \end{equation}
 Choose $O_{\E}={}^t\!\left(0, (1,0,\dots,0)\right)\in \E$ as an origin.
 Since $\Omega_{\mathrm{IV}}\times \R\times \E$ is simply connected, there exists a unique smooth map
 $\widehat{\xi} : \Omega_{\mathrm{IV}}\times \R\times \E\to \R$,
 such that
 \begin{equation}\label{ctfc}
\rho(\widehat{\xi}\left(\beta,\vartheta,(\tau,\mathrm{y})\right))=
   \xi\left(\beta,\vartheta,(\tau,\mathrm{y})\right),\\
    \quad \widehat{ \xi}\left(\mathrm{O}_{\mathrm{IV}},0,O_{\E}\right)=0,
   \end{equation}
for every $(\beta,\vartheta,(\tau,\mathrm{y}))\in \Omega_{\mathrm{IV}}\times \R\times \E$.
For $(\mathrm{\mathbf{X}},\vartheta)\in \OO(2,n+1)$, consider the smooth map $\widehat{\mathrm{L}}_{(\mathrm{\mathbf{X}},\vartheta)}:\E\to \E$ given by
\begin{equation}\label{ac1}
 \widehat{\mathrm{L}}_{(\mathrm{\mathbf{X}},\vartheta)}\big((\tau,\mathrm{y})\big)=
  \left(\widehat{\xi}\left(\pi^-_2(\mathrm{\mathbf{X}}),\vartheta,
    (\tau,\Psi(\mathrm{\mathbf{X}}) \, \mathrm{y})\right),
    \frac{\mathfrak{c}(\mathrm{\mathbf{X}})\, \rho_1(\tau)+\mathfrak{d}(\mathrm{\mathbf{X}})\, \mathrm{y}}
    {\|\mathfrak{c}(\mathrm{\mathbf{X}})\, \rho_1(\tau)+\mathfrak{d}(\mathrm{\mathbf{X}})\, \mathrm{y} \|}\right).
    \end{equation}
Let $\widehat{\mathrm{L}}:\OO(2,n+1)\times \E\to \E$ be defined by
\begin{equation}\label{ac2}
 \widehat{\mathrm{L}}\left((\mathrm{\mathbf{X}},\vartheta),(\tau,\mathrm{y})\right)=
  \widehat{\mathrm{L}}_{(\mathrm{\mathbf{X}},\vartheta)}\big((\tau,\mathrm{y})\big).
         \end{equation}
Then, $\widehat{\mathrm{L}}$ covers the action of $\O(2,n+1)$ on $\EC$, that is,
$\pi_{\mathrm{I},1}\circ \widehat{\mathrm{L}}= \mathrm{L}\circ (\sigma,\pi_{\mathrm{I},1})$.
This implies that $\widehat{\mathrm{L}}_{(\mathrm{\mathbf{X}},\vartheta)}$ is an orientation
and time-orientation preserving conformal transformation of $\E$.

\vskip0.2cm
To complete the proof of Theorem \ref{ThmB} we need the following.

\begin{lemmaB}\label{lemma:B1}
%
The map $\widehat{\mathrm{L}}$ defines an effective left action of $\OO(2,n+1)$ on the Einstein
static universe by restricted conformal transformations. Then, the restricted conformal group of $\E$ is
isomorphic to $\OO(2,n+1)$.
\end{lemmaB}


\begin{proof}[Proof of Lemma B\ref{lemma:B1}]
First, note that $\widehat{\mathrm{L}}_{(I,0)}=\mathrm{Id}_{\E}$. In fact, $\widehat{\mathrm{L}}_{(I,0)}$
is a deck transformation of the covering $\pi_{\mathrm{I},1}$ and
by definition and \eqref{ctfc}, $\widehat{\mathrm{L}}_{(I,0)}(O_{\E}) = O_{\E}$.
This implies that $\widehat{\mathrm{L}}_{(I,0)}=\mathrm{Id}_{\E}$. We now prove that
\[
 \widehat{\mathrm{L}}_{(\mathrm{\mathbf{X}}_*,\vartheta_*)^{-1}}\circ
  \widehat{\mathrm{L}}_{(\mathrm{\mathbf{X}}_*,\vartheta_*)} =
   \mathrm{Id}_{\E},\quad \forall \,\,(\mathrm{\mathbf{X}}_*,\vartheta_*)\in \OO(2,n+1).
    \]
The composition $\widehat{\mathrm{L}}_{(\mathrm{\mathbf{X}}_*,\vartheta_*)^{-1}}\circ \widehat{\mathrm{L}}_{(\mathrm{\mathbf{X}}_*,\vartheta_*)}$ is a deck transformation of
the covering $\pi_{\mathrm{I},1}$.
Consider a smooth path
$[0,1]\ni s\mapsto (\mathrm{\mathbf{X}}(s),\vartheta(s))\in \OO(2,n+1)$,
such that
$(\mathrm{\mathbf{X}}(0),\vartheta(0)) = (\mathrm{\mathbf{X}}_*,\vartheta_*)$
and $(\mathrm{\mathbf{X}}(1),\vartheta(1))=(I,0)$.
Put $\lambda_s = \widehat{\mathrm{L}}_{(\mathrm{\mathbf{X}}(s),\vartheta(s))^{-1}}\circ \widehat{\mathrm{L}}_{(\mathrm{\mathbf{X}}(s),\vartheta(s))}$ and consider the smooth map given by
\[
f :  [0,1]\times \E \ni (s,(\tau,\mathrm{y}))\longmapsto f(s,(\tau,\mathrm{y}))=\lambda_s\left((\tau,\mathrm{y})\right)\in \E.
  \]
By construction, $\lambda_s : \E \to \E$ is a deck transformation of the covering $\pi_{\mathrm{I},1}$, and hence $f(s,(\tau,\mathrm{y}))=(\tau+2\pi k,\mathrm{y})$,
{where $k: [0,1] \to \mathbb Z$, $s\mapsto k(s)$, is independent of $s$.
Since, by \eqref{ctfc}, $f(1,O_{\E})=O_{\E}$, i.e., $\lambda_1(O_{\E})=O_{\E}$,
we have that
$\lambda_s (O_{\E}) = O_{\E}$, and hence $\lambda_s =\mathrm{Id}_{\E}$
for every $s\in [0,1]$.}
In particular, $\lambda_0=\widehat{\mathrm{L}}_{(\mathrm{\mathbf{X}}_*,\vartheta_*)^{-1}}\circ \widehat{\mathrm{L}}_{(\mathrm{\mathbf{X}}_*,\vartheta_*)}=
\mathrm{Id}_{\E}$, which implies that $\widehat{\mathrm{L}}_{(\mathrm{\mathbf{X}}_*,\vartheta_*)}$ is a
conformal diffeomorphism of $\E$ with inverse $\widehat{\mathrm{L}}_{(\mathrm{\mathbf{X}}_*,\vartheta_*)^{-1}}$.
We now show that
\[
 \widehat{\mathrm{L}}_{(\mathrm{\mathbf{X}}_*,\vartheta_*)\star (\mathrm{\mathbf{X}}'_*,\vartheta'_*)}=
  \widehat{\mathrm{L}}_{(\mathrm{\mathbf{X}}_*,\vartheta_*)}\circ \widehat{\mathrm{L}}_{(\mathrm{\mathbf{X}}'_*,\vartheta'_*)},
    \]
for every $(\mathrm{\mathbf{X}}_*,\vartheta_*)$, $(\mathrm{\mathbf{X}}'_*,\vartheta'_*)$ $\in$ $\OO(2,n+1)$.
Let $\Phi_{((\mathrm{\mathbf{X}}_*,\vartheta_*),(\mathrm{\mathbf{X}}'_*,\vartheta'_*))}$ be the restricted
conformal automorphism of $\E$ defined by
\[
  \Phi_{((\mathrm{\mathbf{X}}_*,\vartheta_*),(\mathrm{\mathbf{X}}'_*,\vartheta'_*))}=
\left(\widehat{\mathrm{L}}_{(\mathrm{\mathbf{X}}_*,\vartheta_*)\star (\mathrm{\mathbf{X}}'_*,\vartheta'_*)}\right)^{-1}\circ
(\widehat{\mathrm{L}}_{(\mathrm{\mathbf{X}}_*,\vartheta_*)}\circ \widehat{\mathrm{L}}_{(\mathrm{\mathbf{X}}'_*,\vartheta'_*)}).
   \]
By definition, $\Phi_{((\mathrm{\mathbf{X}}_*,\vartheta_*),(\mathrm{\mathbf{X}}'_*,\vartheta'_*))}$
is a deck transformation of $\pi_{\mathrm{I},1}$.
Next, consider the smooth paths
$[0,1]\ni s \mapsto (\mathrm{\mathbf{X}}(s),\vartheta(s))\in \O(2,n+1)$ and
$[0,1]\ni s \mapsto (\mathrm{\mathbf{X}}'(s),\vartheta'(s))$ $\in$ $\O(2,n+1)$,
such that $(\mathrm{\mathbf{X}}(0),\vartheta(0))=(\mathrm{\mathbf{X}}_*,\vartheta_*)$,
$(\mathrm{\mathbf{X}}(1),\vartheta(1))=(I,0)$ and
$(\mathrm{\mathbf{X}}'(0),\vartheta'(0))=(\mathrm{\mathbf{X}}'_*,\vartheta'_*)$,
$(\mathrm{\mathbf{X}}'(1),\vartheta'(1))=(I,0)$, respectively.
Consider the differentiable map
\[
\widetilde{f}: [0,1]\times \E \ni (s,(\tau,\mathrm{y}))\longmapsto
\Phi_{((\mathrm{\mathbf{X}}(s),\vartheta(s)),(\mathrm{\mathbf{X}}'(s),\vartheta'(s)))}\in \E.
   \]
Then, for every $s\in [0,1]$, the map
$\widetilde{f}_s: \E \ni (\tau,\mathrm{y})\mapsto \widetilde{f}(s,(\tau,\mathrm{y}))\in \E$
is a deck transformation of $\pi_{\mathrm{I},1}$. Consequently, there exists $k\in \Z$, such that
$\widetilde{f}(s,(\tau,\mathrm{y})) = (\tau+2\pi k,\mathrm{y})$.
Since $\widetilde{f}_1=\mathrm{Id}_{\E}$, it follows that
$\widetilde{f}_s=\mathrm{Id}_{\E}$, for every $s\in [0,1]$. This implies that
\[
 \mathrm{Id}_{\E}=\widetilde{f}_0=\left(\widehat{\mathrm{L}}_{(\mathrm{\mathbf{X}}_*,\vartheta_*)
 \star (\mathrm{\mathbf{X}}'_*,\vartheta'_*)}\right)^{-1}\circ
  \left(\widehat{\mathrm{L}}_{(\mathrm{\mathbf{X}}_*,\vartheta_*)}\circ
   \widehat{\mathrm{L}}_{(\mathrm{\mathbf{X}}'_*,\vartheta'_*)}\right).
   \]
We now prove that the action $\widehat{\mathrm{L}}$ is effective. Suppose $\widehat{\mathrm{L}}_{(\mathrm{\mathbf{X}}_*,\vartheta_*)}=\mathrm{Id}_{\E}$.
Since the action of $\O(2,n+1)$ on $\EC$ is effective,
we have $\mathrm{\mathbf{X}}_*= I_{n+3}$ and $\vartheta_*=2\pi k$, where $k\in \Z$. Then,
$\widehat{\mathrm{L}}_{(I_{n+3},2\pi k)}(\tau,\mathrm{y})=(\tau +2\pi k, \mathrm{y})$.
Since $\widehat{\mathrm{L}}_{(\mathrm{\mathbf{X}}_*,\vartheta_*)}=\mathrm{Id}_{\E}$,
we must have $k=0$ and hence $(\mathrm{\mathbf{X}}_*,\vartheta_*)=(I_{n+3},0)$.
We have shown that $\OO(2,n+1)$ is a connected Lie
group acting effectively and transitively on $\E$ by restricted conformal transformations.
Since $\OO(2,n+1)$ has dimension $(n+3)(n+2)/2$,
it follows from Theorem \ref{THM0} that $\OO(2,n+1)$ is isomorphic to $\mathcal{C}^{\uparrow}_+(\E)$.
%
This concludes the proof of Lemma B\ref{lemma:B1}.
\end{proof}


We now resume the proof of Theorem \ref{ThmB}.
With reference to Definitions \ref{d:icf1stk}, \ref{d:icf2ndk} and \ref{d:ThTh'},
it is clear that $\mathcal{T}_h$ is the group of deck transformations of the covering
$\pi_{\mathrm{I},h} :\E\to \mathcal{E}^{1,n}_{\mathrm{I},h}$,
and similarly, if $n$ is odd, $\mathcal{T}'_h$ is the group of deck transformations of the covering $\pi_{\mathrm{II},h}:\E\to \mathcal{E}^{1,n}_{\mathrm{II},h}$. The left action of $\OO(2,n+1)$ on $\E$ descends to effective left actions
by restricted conformal transformations
$\mathrm{L}_{h}^{\mathrm{I}}: \widehat{\mathrm{O}}^{\uparrow,\mathrm{I}}_{+,h}(2,n+1)\times \mathcal{E}^{1,n}_{\mathrm{I},h}\to \mathcal{E}^{1,n}_{\mathrm{I},h}$ and
$\mathrm{L}_{h}^{\mathrm{II}}: \widehat{\mathrm{O}}^{\uparrow,\mathrm{II}}_{+,h}(2,n+1)\times \mathcal{E}^{1,n}_{\mathrm{II},h}\to \mathcal{E}^{1,n}_{\mathrm{II},h}$ on $ \mathcal{E}^{1,n}_{\mathrm{I},h}$ and $\mathcal{E}^{1,n}_{\mathrm{II},h}$, respectively.
%
%
Again by Theorem \ref{THM0}, it follows that $\widehat{\mathrm{O}}^{\uparrow,\mathrm{I}}_{+,h}(2,n+1)$
and $\widehat{\mathrm{O}}^{\uparrow,\mathrm{II}}_{+,h}(2,n+1)$ are
isomorphic
to the restricted conformal groups of $\mathcal{E}^{1,n}_{\mathrm{I},h}$ and $\mathcal{E}^{1,n}_{\mathrm{II},h}$,
respectively.
\end{proof}

It is important to observe that
any two integral compact forms with different indices belonging to the same series,
as well as any two integral compact forms belonging to different series, cannot be conformally
equivalent. More precisely, we have the following.


\begin{prop}\label{prop:inequiv}
$\mathrm(1)$ Any two integral compact forms
$\mathcal{E}^{1,n}_{\mathrm{I},k}$ and $\mathcal{E}^{1,n}_{\mathrm{I},h}$, $k\neq h$,
cannot be conformally diffeomorphic.
%
$\mathrm(2)$ Any two integral compact forms
$\mathcal{E}^{1,n}_{\mathrm{II},k}$ and $\mathcal{E}^{1,n}_{\mathrm{II},h}$, $k\neq h$,
cannot be conformally diffeomorphic.
%
$\mathrm(3)$ Any two integral compact forms
$\mathcal{E}^{1,n}_{\mathrm{II},k}$ and $\mathcal{E}^{1,n}_{\mathrm{I},h}$
cannot be conformally diffeomorphic.
%
%
\end{prop}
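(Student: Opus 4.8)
The plan is to lift a hypothetical conformal diffeomorphism to the common universal cover $\E$ and there reduce the claim to an elementary fact about the center $\widehat{\mathrm Z}(2,n+1)$ of $\OO(2,n+1)$. Write the two integral compact forms under comparison as $\mathcal{M}_1=\E/\Gamma_1$ and $\mathcal{M}_2=\E/\Gamma_2$, where $p_i\colon\E\to\mathcal{M}_i$ is the covering projection and $\Gamma_i\subset\mathcal{C}^{\uparrow}_+(\E)$ is its deck group; by Definition \ref{d:ThTh'} and the proof of Theorem \ref{ThmB}, under the identification $\mathcal{C}^{\uparrow}_+(\E)\cong\OO(2,n+1)$ we have $\Gamma_1\in\{\mathcal{T}_{k},\mathcal{T}'_{k}\}$ and $\Gamma_2\in\{\mathcal{T}_{h},\mathcal{T}'_{h}\}$, all of these being subgroups of the center $\widehat{\mathrm Z}(2,n+1)$. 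Given a conformal diffeomorphism $F\colon\mathcal{M}_1\to\mathcal{M}_2$, one uses that $\E$ is simply connected and that the $p_i$ are local conformal equivalences to lift $F\circ p_1$ through $p_2$ to a conformal transformation $\widehat F$ of $\E$ with $p_2\circ\widehat F=F\circ p_1$; the standard covering-space argument then gives $\widehat F\,\Gamma_1\,\widehat F^{-1}=\Gamma_2$.

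Next I would promote this to an automorphism of a group we understand. Conjugation by $\widehat F$ is an automorphism of the group $\mathcal{C}(\E)$ of all conformal transformations of $\E$, and it carries $\mathcal{C}^{\uparrow}_+(\E)$ onto itself, since membership in $\mathcal{C}^{\uparrow}_+(\E)$ is cut out by preservation of orientation and of time-orientation, conditions stable under conjugation (equivalently, $\mathcal{C}^{\uparrow}_+(\E)$ is the identity component of $\mathcal{C}(\E)$: it is open, and connected by Theorem \ref{ThmA}). Hence conjugation by $\widehat F$ induces an automorphism of $\mathcal{C}^{\uparrow}_+(\E)\cong\OO(2,n+1)$, and therefore an automorphism $\Phi$ of its center $\widehat{\mathrm Z}(2,n+1)$, with $\Phi(\Gamma_1)=\Gamma_2$. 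Since $\Gamma_1,\Gamma_2\subset\widehat{\mathrm Z}(2,n+1)$ and a group automorphism preserves the index of a subgroup, this forces $[\widehat{\mathrm Z}(2,n+1):\Gamma_1]=[\widehat{\mathrm Z}(2,n+1):\Gamma_2]$.

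It then only remains to compute that index from Lemma A\ref{lemma:A4} and Definition \ref{d:ThTh'}. For $n$ even, $\widehat{\mathrm Z}(2,n+1)=\langle(I,2\pi)\rangle$ and $\mathcal{T}_h=\langle(I,2\pi)^{h}\rangle$ has index $h$; as there are no second-kind forms, $[\widehat{\mathrm Z}:\Gamma_1]=[\widehat{\mathrm Z}:\Gamma_2]$ yields $k=h$, which is case (1). For $n$ odd, $\widehat{\mathrm Z}(2,n+1)$ is generated by $(-I,\pi)$, with $(-I,\pi)^{m}=((-1)^{m}I,\pi m)$ --- the key relation $(-I,\pi)^2=(I,2\pi)$ being exactly $\mathrm T'_\pi\circ\mathrm T'_\pi=\mathrm T_{2\pi}$ read off from the realizations $\mathrm T_{2\pi}=\widehat{\mathrm L}_{(I,2\pi)}$ and $\mathrm T'_\pi=\widehat{\mathrm L}_{(-I,\pi)}$ used in the proof of Theorem \ref{ThmB} --- so that $\mathcal{T}_h=\langle(-I,\pi)^{2h}\rangle$ has index $2h$ while $\mathcal{T}'_h=\langle(-I,\pi)^{2h+1}\rangle$ has index $2h+1$. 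These values are pairwise distinct, the first kind occupying the even indices and the second kind the odd ones; so $[\widehat{\mathrm Z}:\Gamma_1]=[\widehat{\mathrm Z}:\Gamma_2]$ forces $k=h$ in cases (1) and (2) and is impossible in case (3). In each case this contradicts the hypothesis that $\mathcal{M}_1$ and $\mathcal{M}_2$ are distinct forms.

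The conceptual content, and the only place needing any care, is the index bookkeeping of the last step: for $n$ odd one must observe that the deck group of $\pi_{\mathrm I,h}$ is generated by an \emph{even} power of the central generator $(-I,\pi)$, which is what raises its index to $2h$ and keeps the first and second families disjoint. Everything else --- the existence and conformality of the lift $\widehat F$, the passage to an automorphism of $\widehat{\mathrm Z}(2,n+1)$, and the invariance of subgroup indices under automorphisms --- is formal.
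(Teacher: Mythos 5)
Your proof is correct. It shares with the paper's argument the essential first move --- lift the hypothetical conformal diffeomorphism to $\E$ and exploit the fact that the deck groups $\mathcal{T}_k$, $\mathcal{T}'_k$ lie in the center $\widehat{\mathrm{Z}}(2,n+1)$ of $\OO(2,n+1)$ --- but the endgame is genuinely different. The paper identifies the lift with a left translation $\widehat{\mathrm{L}}_{(\mathbf{H},\tau)}$ via the conformal prolongation and computes explicitly how it moves the fibers, extracting mutual divisibility relations ($k=mh$ and $h=nk$, resp.\ $2k+1=(2q+1)(2h+1)$ and its symmetric counterpart) whence $k=h$, with case (3) excluded by a parity clash. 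You instead note that conjugation by the lift induces an automorphism of $\mathcal{C}^{\uparrow}_+(\E)$, hence of its center, carrying $\Gamma_1$ to $\Gamma_2$, and then invoke invariance of subgroup index under automorphisms: since the center is infinite cyclic --- generated by $(I,2\pi)$ for $n$ even and by $(-I,\pi)$ for $n$ odd, the relation $(-I,\pi)\star(-I,\pi)=(I,2\pi)$ following from effectiveness of $\widehat{\mathrm{L}}$ together with $\mathrm{T}'_\pi\circ\mathrm{T}'_\pi=\mathrm{T}_{2\pi}$ --- the subgroups $\mathcal{T}_h$ and $\mathcal{T}'_h$ have pairwise distinct indices ($h$, resp.\ $2h$ and $2h+1$), and all three cases follow at once. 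Your route buys two things: it treats the three cases uniformly, and it does not require the lift to preserve orientation or time-orientation (the paper's proof tacitly assumes this when it invokes the conformal prolongation, which is defined only for restricted conformal transformations), since the index argument is insensitive to whether conjugation acts on the cyclic center by the identity or by inversion. One remark: your identification of the center for $n$ odd as the infinite cyclic group $\langle(-I,\pi)\rangle$ is the load-bearing step of the index bookkeeping, and it is right --- indeed it quietly corrects the ``$\cong\Z_2\times\Z$'' asserted in Theorem A, which is incompatible with $(-I,\pi)\star(-I,\pi)=(I,2\pi)$.
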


\begin{proof}
(1) Suppose
$f : \mathcal{E}^{1,n}_{\mathrm{I},k} \to \mathcal{E}^{1,n}_{\mathrm{I},h}$
is a conformal diffeomorphism.
Since $\mathcal{E}^{1,n}$ is simply connected, there exists
an orientation preserving conformal transformation  of maximal rank
$\tilde f : \mathcal{E}^{1,n} \to \mathcal{E}^{1,n}$ that covers $f$.
Let $\dot{\tilde{f}} : \OO(2,n+1) \to \OO(2,n+1)$ be the conformal prolongation of $\tilde f$.
Since $\dot{\tilde{f}}$ preserves the Maurer--Cartan form of $\OO(2,n+1)$, $\dot{\tilde{f}}$ coincides with
the left multiplication by an element $(\mathbf H,\tau)$ of $\OO(2,n+1)$. This implies that,
for each $(t, \mathrm x) \in \E$,
\[
 \tilde f\left((t, \mathrm x) \right) =
 \widehat{\mathrm{L}}_{({\mathbf{H}},\tau)}\big((t,\mathrm{x})\big).
   \]
Since $\tilde f$ covers $f$, $\tilde f$ takes the fibers of the covering
$\pi_{\mathrm I,k} : \E \to \mathcal{E}^{1,n}_{\mathrm{I},k}$  to the
fibers of the covering $\pi_{\mathrm I,h} : \E \to \mathcal{E}^{1,n}_{\mathrm{I},h}$.
In particular,
if we set $(\mathfrak{t}, \mathrm{y}) : = \widehat{\mathrm{L}}_{({\mathbf{H}},\tau)}\big((t,\mathrm{x})\big)$
and take into account that $\mathcal{T}_h$ (cf. Definition \ref{d:ThTh'}) is the deck transformation
group of the covering $\pi_{\mathrm{I},h}$, then
\begin{equation}\label{fiber-to-fiber}
 \widehat{\mathrm{L}}_{({\mathbf{H}},\tau)}\big((t + 2\pi k,\mathrm{x})\big)
 = (\mathfrak{t} +2\pi m h, \mathrm{y}), \quad m\in \mathbb Z.
   \end{equation}
Using the fact that $(I, 2\pi k)$ belongs to the center of $\OO(2,n+1)$, the left-hand-side of \eqref{fiber-to-fiber}
can be written as
\begin{equation}\label{fiber-to-fiber1}
\begin{split}
 \widehat{\mathrm{L}}_{({\mathbf{H}},\tau)}\big((t + 2\pi k,\mathrm{x})\big)
 &= \left(\widehat{\mathrm{L}}_{({\mathbf{H}},\tau)} \circ
     \widehat{\mathrm{L}}_{({I},2\pi k)} \right) \big((t,\mathrm{x})\big) \\
         &= \widehat{\mathrm{L}}_{({I},2\pi k)} \left(\widehat{\mathrm{L}}_{({\mathbf{H}},\tau)}
          \big((t,\mathrm{x})\big) \right)\\
          & = (\mathfrak{t} +2\pi k, \mathrm{y}).
       \end{split}
  \end{equation}
From \eqref{fiber-to-fiber} and \eqref{fiber-to-fiber1}, it follows that
 $k = mh$, $m \in \mathbb Z$. Since $h,k >0$, $m>0$. Repeating the argument
for the inverse map $f^{-1}$ yields $h=nk$, $n \in \mathbb Z$, $n>0$. In conclusion,
if $f$ is a conformal diffeorphism, then $k = h$.

(2) Suppose
$f : \mathcal{E}^{1,n}_{\mathrm{II},k} \to \mathcal{E}^{1,n}_{\mathrm{II},h}$
is a conformal diffeomorphism.
Let
%
%
$\tilde f : \mathcal{E}^{1,n} \to \mathcal{E}^{1,n}$ be the conformal transformation that covers $f$
and
let $\dot{\tilde{f}}$ be the conformal prolongation of $\tilde f$.
%
As above,
$\dot{\tilde{f}}$ coincides with
the left multiplication by an element $(\mathbf H,\tau)$ of $\OO(2,n+1)$
and, for each $(t, \mathrm x) \in \E$,
\[
 \tilde f\left((t, \mathrm x) \right) =
 \widehat{\mathrm{L}}_{({\mathbf{H}},\tau)}\big((t,\mathrm{x})\big).
   \]
As $\tilde f$ covers $f$, $\tilde f$ takes the fibers of the covering
$\pi_{\mathrm{II},k} : \E \to \mathcal{E}^{1,n}_{\mathrm{II},k}$  to the
fibers of the covering $\pi_{\mathrm{II},h} : \E \to \mathcal{E}^{1,n}_{\mathrm{II},h}$.
In particular,
taking into account that $\mathcal{T}'_h$ (cf. Definition \ref{d:ThTh'}) is the deck transformation
group of $\pi_{\mathrm{II},h}$, we have
\begin{equation}\label{fiber-to-fiber2}
 \widehat{\mathrm{L}}_{({\mathbf{H}},\tau)}\big((t + (2k+1)\pi , - \mathrm{x})\big)
 = \begin{cases}
  (\mathfrak{t} + \pi  (2h+1) 2q, \mathrm{y}), \quad q \in \mathbb Z,\\
   (\mathfrak{t} + \pi (2h+1) (2q +1), -\mathrm{y}), \quad q \in \mathbb Z,\\
 \end{cases}
   \end{equation}
where again $(\mathfrak{t}, \mathrm{y}) : = \widehat{\mathrm{L}}_{({\mathbf{H}},\tau)}\big((t,\mathrm{x})\big)$.
Using the fact that $(-I, (2k+1)\pi )$ belongs to the center of $\OO(2,n+1)$, the left-hand-side of \eqref{fiber-to-fiber2}
can be written as
\begin{equation}\label{fiber-to-fiber3}
\begin{split}
 \widehat{\mathrm{L}}_{({\mathbf{H}},\tau)}\big((t + (2k+1)\pi , - \mathrm{x})\big)
&= \left(\widehat{\mathrm{L}}_{({\mathbf{H}},\tau)} \circ
     \widehat{\mathrm{L}}_{(-{I},(2k+1)\pi )} \right) \big((t,\mathrm{x})\big) \\
         &= \widehat{\mathrm{L}}_{(-{I},(2k+1)\pi)} \left(\widehat{\mathrm{L}}_{({\mathbf{H}},\tau)}
          \big((t,\mathrm{x})\big) \right)\\
        &= (\mathfrak{t} + (2k+1)\pi , - \mathrm{y}).
       \end{split}
  \end{equation}
  From \eqref{fiber-to-fiber2} and \eqref{fiber-to-fiber3}, it follows that
 $2k +1 = (2q+1)(2h+1)$, $q \in \mathbb Z$. Since $h,k \geq 0$, $2q+1>0$. Repeating the argument
for the inverse map $f^{-1}$ yields $2h +1 = (2s+1)(2k+1)$, $s \in \mathbb Z$, $2s+1>0$.
Therefore, if $f$ is a conformal diffeomorphism, $k = h$.

(3) Seeking a contradiction, we suppose that
$f : \mathcal{E}^{1,n}_{\mathrm{II},k} \to \mathcal{E}^{1,n}_{\mathrm{I},h}$
is a conformal diffeomorphism.
Arguing as in the proof of points (1) and (2), we are led to
\[
 (\mathfrak{t} + (2k+1)\pi , - \mathrm{y}) = (\mathfrak{t} + 2\pi m h ,  \mathrm{y}), \quad m\in \mathbb Z,
  \]
which is the desired contradiction.
%
\end{proof}


\subsection{Lorentz manifolds with restricted conformal group of maximal dimension}

In this section, we prove the following result about Lorentz manifolds
with restricted conformal group of maximal dimension.


\begin{thmx}\label{ThmC}
Let $\mathbb{M}$ be a $(n+1)$-dimensional, $n\ge 2$, connected, oriented, time-oriented conformal Lorentz manifold
such that
\[
\mathrm{dim}(\mathcal{C}^{\uparrow}_+(\mathbb{M}))=\frac12(n+3)(n+2).
  \]
Then
\begin{enumerate}
\item If $\mathbb{M}$ is simply connected, then $\mathbb{M}$ is conformally equivalent to the Einstein universe $\E$.

\item If $\mathbb{M}$ is not simply connected and $n$ is even, then $\mathbb{M}$ is conformally equivalent to an integral compact form of the first kind of the Einstein  universe.
\item If $\mathbb{M}$ is not simply connected and $n$ is odd, then $\mathbb{M}$ is conformally equivalent to either an integral compact form of the first kind of the Einstein universe, or to an integral compact form of the second kind of the Einstein universe.
\end{enumerate}
\end{thmx}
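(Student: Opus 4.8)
The plan is to bootstrap everything from the rigidity statement of Theorem~\ref{THM0}, combined with the explicit models furnished by Theorems~\ref{ThmA} and~\ref{ThmB}. First I would set $\mathrm{G}$ equal to the identity component of $\mathcal{C}^{\uparrow}_+(\mathbb{M})$: this is a connected Lie group acting effectively on $\mathbb{M}$ by restricted conformal transformations with $\dim\mathrm{G}=\frac12(n+3)(n+2)$, so Theorem~\ref{THM0} applies in its equality case. In particular $\mathfrak{j}$ is a diffeomorphism identifying $\mathrm{Q}(\mathbb{M})$ with $\mathrm{G}$ as Lie groups and showing $\mathcal{C}^{\uparrow}_+(\mathbb{M})=\mathrm{G}$; the group $\mathrm{G}$ is locally isomorphic to $\mathrm{M}^{\uparrow}_+(2,n+1)$, hence to $\O(2,n+1)$; the isotropy subgroup $\mathrm{H}=\pi_{\mathrm{Q}}^{-1}(p_*)$ is a closed connected subgroup isomorphic to $\mathrm{H}^{\uparrow}_+(2,n+1)$; and $\mathbb{M}$ is the homogeneous space $\mathrm{G}/\mathrm{H}$. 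Because the conformal structure of $\mathbb{M}$ is $\mathrm{G}$-invariant and the isotropy representation of $\mathrm{H}^{\uparrow}_+(2,n+1)$ on $\mathfrak{m}(2,n+1)/\mathfrak{h}(2,n+1)\cong\R^{1,n}$ has image $\mathrm{C}\O(1,n)$, which is precisely the stabiliser of a conformal Lorentz class, the space $\mathrm{G}/\mathrm{H}$ carries a unique $\mathrm{G}$-invariant conformal Lorentz structure, and $\mathbb{M}$ is conformally equivalent to $\mathrm{G}/\mathrm{H}$ with that canonical structure. The problem is thereby reduced to classifying the connected Lie groups $\mathrm{G}$ locally isomorphic to $\O(2,n+1)$ together with a closed connected subgroup isomorphic to $\mathrm{H}^{\uparrow}_+(2,n+1)$, up to the resulting conformal homogeneous space.

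Next I would lift to the universal covering $\UO(2,n+1)$ of $\O(2,n+1)$ (Remark~\ref{RM}). Writing $\mathrm{G}=\UO(2,n+1)/\Gamma$ with $\Gamma$ discrete central, and letting $P\subset\UO(2,n+1)$ be the connected subgroup with Lie algebra $\mathfrak{h}(2,n+1)$, the preimage of $\mathrm{H}$ in $\UO(2,n+1)$ is $P\Gamma$; hence $\mathbb{M}\cong\UO(2,n+1)/(P\Gamma)$, and the homotopy sequence of the fibration $P\Gamma\to\UO(2,n+1)\to\mathbb{M}$ gives $\pi_1(\mathbb{M})\cong\Gamma/(\Gamma\cap P)$. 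If $\mathbb{M}$ is simply connected this forces $\Gamma\subset P$, so $\mathbb{M}\cong\UO(2,n+1)/P$. By Theorem~\ref{ThmB}, $\OO(2,n+1)$ acts effectively and transitively on the simply connected manifold $\E$ by restricted conformal transformations with isotropy isomorphic to $\mathrm{H}^{\uparrow}_+(2,n+1)$ (the latter by Theorem~\ref{THM0}), so the same computation gives $\E\cong\UO(2,n+1)/P$; the uniqueness of the invariant conformal structure then yields $\mathbb{M}\cong\E$, which is~(1).

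If $\mathbb{M}$ is not simply connected, its universal covering is $\widetilde{\mathbb{M}}\cong\UO(2,n+1)/P$, on which $\UO(2,n+1)$ acts by restricted conformal transformations with kernel contained in the finite kernel of $\UO(2,n+1)\to\OO(2,n+1)$; hence $\dim\mathcal{C}^{\uparrow}_+(\widetilde{\mathbb{M}})=\frac12(n+3)(n+2)$, and by part~(1), $\widetilde{\mathbb{M}}\cong\E$. Thus $\mathbb{M}\cong\E/\Lambda$, where $\Lambda=\pi_1(\mathbb{M})$ is a nontrivial discrete subgroup of $\mathcal{C}^{\uparrow}_+(\E)=\OO(2,n+1)$ acting freely and properly discontinuously; the deck transformations are conformal and, as $\mathbb{M}$ is oriented and time-oriented, preserve orientation and time-orientation. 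Every restricted conformal transformation of $\mathbb{M}$ lifts to one of $\E$ normalising $\Lambda$, so $\mathcal{C}^{\uparrow}_+(\mathbb{M})\cong N_{\OO(2,n+1)}(\Lambda)/\Lambda$; maximality of $\dim\mathcal{C}^{\uparrow}_+(\mathbb{M})$ forces $N_{\OO(2,n+1)}(\Lambda)=\OO(2,n+1)$, and since $\OO(2,n+1)$ is connected while $\Lambda$ is discrete, conjugation acts trivially on $\Lambda$, so $\Lambda\subset\widehat{\mathrm{Z}}(2,n+1)$.

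It then remains to enumerate the nontrivial subgroups of the centre $\widehat{\mathrm{Z}}(2,n+1)$ computed in Theorem~\ref{ThmA} and to recognise the quotients $\E/\Lambda$ through the explicit action $\widehat{\mathrm{L}}$: a central element $((-1)^{k}I,\pi k)$ acts on $\E\cong\R\times\mathbb{S}^{n}$ by $(\tau,\mathrm{y})\mapsto(\tau+\pi k,(-1)^{k}\mathrm{y})$. For $n$ even, $\widehat{\mathrm{Z}}(2,n+1)\cong\Z$, its nontrivial subgroups are the $\mathcal{T}_h$ ($h\ge1$, generated by $(I,2\pi h)$), and $\E/\mathcal{T}_h=\mathcal{E}^{1,n}_{\mathrm{I},h}$; this gives~(2). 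For $n$ odd, the nontrivial subgroups are the $\mathcal{T}_h$ ($h\ge1$) and the $\mathcal{T}'_h$ ($h\ge0$, generated by $(-I,(2h+1)\pi)$, acting by $(\tau,\mathrm{y})\mapsto(\tau+(2h+1)\pi,-\mathrm{y})$), with quotients $\mathcal{E}^{1,n}_{\mathrm{I},h}$ and $\mathcal{E}^{1,n}_{\mathrm{II},h}$ respectively, any torsion subgroup that may occur being generated by an element acting as $(\tau,\mathrm{y})\mapsto(\tau+\pi,-\mathrm{y})$ and so yielding $\mathcal{E}^{1,n}_{\mathrm{II},0}$; this gives~(3). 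I expect the delicate points --- rather than any lengthy computation --- to be the verification that the $\mathrm{G}$-invariant conformal structure on $\mathrm{G}/\mathrm{H}$ is unique and agrees with the given one (so that the conclusion is genuine conformal, not merely equivariant, equivalence), and, in the non-simply-connected case, the passage from maximality to the centrality of $\Lambda$ together with the complete and non-redundant identification of the quotients $\E/\Lambda$ with the integral compact forms of Definitions~\ref{d:icf1stk} and~\ref{d:icf2ndk}.
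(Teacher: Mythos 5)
Your proposal is correct and follows the same overall strategy as the paper: apply the equality case of Theorem \ref{THM0} to realize $\mathbb M$ as a homogeneous space of a group locally isomorphic to $\O(2,n+1)$ with isotropy $\mathrm{H}^{\uparrow}_+(2,n+1)$, pass to the universal covering group to compare with $\E$, show in the non-simply-connected case that the deck group is central in $\OO(2,n+1)$, and enumerate the central subgroups via Theorem \ref{ThmA}. Two steps are executed differently, and both of your variants are sound. First, you establish that the equivariant identification $\mathbb M\cong\UO(2,n+1)/P\cong\E$ is conformal by noting that the isotropy representation on $\mathfrak m(2,n+1)/\mathfrak h(2,n+1)\cong\R^{1,n}$ has image $\mathrm{C}\O(1,n)$, so the homogeneous space carries a unique invariant conformal Lorentz class; the paper instead identifies the two stabilizers as the common maximal integral manifold through the identity of the Pfaffian system $\phi^1_0=\dots=\phi^{n+1}_0=0$ and verifies conformality of the induced diffeomorphism by pulling back local coframes through local sections. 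Your shortcut is cleaner, though it deserves the one-line justification that the standard representation of $\mathrm{O}(1,n)$ admits, up to scale, a unique invariant symmetric bilinear form. Second, in the non-simply-connected case the paper lifts the whole group action to the universal cover via Bredon's lifting theorem and obtains centrality of the deck group from the constancy of the conjugation map $f_{(\mathbf Z,\tau')}$, whereas you lift individual transformations and argue through $\mathcal C^{\uparrow}_+(\mathbb M)\cong N_{\OO(2,n+1)}(\Lambda)/\Lambda$ and maximality of the dimension; these are equivalent. The only imprecision is your aside about ``any torsion subgroup that may occur'': since $\widehat{\mathrm L}$ is effective and every nonidentity central element shifts $\tau$ by a nonzero multiple of $\pi$, the center is torsion-free, so no such case arises and the list $\mathcal T_h$ ($h\ge1$) and $\mathcal T'_h$ ($h\ge0$, $n$ odd) is already exhaustive.
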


\begin{proof}
If $\mathrm{dim}(\mathcal{C}^{\uparrow}_+(\mathbb{M}))=\frac12(n+3)(n+2)$, from Theorem \ref{THM0} it follows that:

\begin{itemize}

\item the Cartan conformal bundle $\mathrm Q(\mathbb{M})$ is a Lie group acting effectively on $\mathbb{M}$
by restricted conformal transformations;

\item the normal conformal connection of $\mathrm Q(\mathbb{M})$, denoted by $\phi'$, coincides with the Maurer--Cartan form;

\item Let $e'\in \mathrm Q(\mathbb{M})$ be the neutral element and put $p'_*=\pi_{\mathrm Q(\mathbb{M})}(e')$.
The isotropy subgroup $\mathrm{H}'\subset \mathrm Q(\mathbb{M})$ of the point $p_*'$ coincides with the fiber
$\pi_{\mathrm Q(\mathbb{M})}^{-1}(p'_*)$, and hence it is isomorphic to $\mathrm{H}^{\uparrow}_+(2,n+1)$;

\item $\mathrm{H}'$ is the maximal integral submanifold through $e'$ of the left-invariant completely integrable Pfaffian differential systems generated by the 1-forms ${\phi'}^j_0$, $j=1,\dots,n+1$.

\end{itemize}

\noindent Analogous conclusions hold for the Einstein static universe, namely:

\begin{itemize}

\item  $\mathrm Q(\E)\cong \OO(2,n+1)$ is a Lie group acting effectively on $\E$ by restricted conformal transformations;

\item the normal conformal connection of $\mathrm Q(\E)$, denoted by $\phi''$, coincides with the Maurer--Cartan form;

\item  Let $e''\in \mathrm Q(\E)$ be the neutral element and put $p''_*=\pi_{\mathrm Q(\E)}(e'')$. The stabilizer of the point $p_*''$ coincides with the fiber $\pi_{\mathrm Q(\E)}^{-1}(p''_*)$. It is a closed Lie
    subgroup $\mathrm{H}''\subset \mathrm Q(\E)$, isomorphic to $\mathrm{H}^{\uparrow}_+(2,n+1)$.


\item $\mathrm{H}''$ is the maximal integral sub-manifold through $e''$ of the left-invariant completely integrable Pfaffian differential systems generated by the 1-forms ${\phi''}^{j}_0$, $j=1,\dots,n+1$.

\end{itemize}

For point (1),
let $\widehat{\mathrm{S}}(2,n+1)$ be the universal covering group of $\mathrm Q(\E)$.
Since $\mathbb{M}$ is connected and the fibers of
$\mathrm{Q}(\mathbb{M})$ are connected, from the exact homotopy sequence of a principal bundle
\cite{St},
in this case $\pi_{\mathrm{Q}}:\mathrm{Q}(\mathbb{M})\to \mathbb{M}$, it follows that  $\mathrm{Q}(\mathbb{M})$ is connected.
Hence $\widehat{\mathrm{S}}(2,n+1)$ is also the universal covering group of $\mathrm{Q}(\mathbb{M})$. Let
\[
 \mathrm{pr}_1:\widehat{\mathrm{S}}(2,n+1)\to \mathrm{Q}(\mathbb{M}),\quad
  \mathrm{pr}_2: \widehat{\mathrm{S}}(2,n+1)\to \mathrm Q(\E)
    \]
be the corresponding covering homomorphisms. They can be chosen in a way that
$$
  \phi={\mathrm{pr}_1}^*(\phi')={\mathrm{pr}_2}^*(\phi''),
   $$
where $\phi$ is the Maurer--Cartan form of $\widehat{\mathrm{S}}(2,n+1)$. Let $e\in \widehat{\mathrm{S}}(2,n+1)$
be the multiplicative unit and $\widetilde{\mathrm{H}}$ be the maximal integral submanifold trough $e$ of the
completely integrable Pfaffian differential system generated by $\phi^1_0,\dots,\phi^{n+1}_0$.
The universal covering $\widehat{\mathrm{S}}(2,n+1)$ acts almost effectively on $\mathbb{M}$ and $\E$.
Let $\widetilde{\mathrm H}'$ and $\widetilde{\mathrm H}''$ be the stabilizers of these actions at $p'_*$ and $p''_*$,
respectively. Since $\widehat{\mathrm{S}}(2,n+1)$, $\mathbb{M}$, and $\E$ are connected and simply connected,
from the exact homotopy sequence of a principal bundle it follows that $\widetilde{\mathrm H}'$
and $\widetilde{\mathrm H}''$
are connected. By construction, they are integral manifolds of the Pfaffian differential
system $\phi^1_0=\cdots=\phi^{n+1}=0$. Hence $\widetilde{\mathrm H}'=\widetilde{\mathrm H}''=\widetilde{\mathrm{H}}$.
Therefore, $\mathbb{M}$ and $\E$ are both diffeomorphic to the homogeneous space $\widehat{\mathrm{S}}(2,n+1)/\widetilde{\mathrm{H}}$. Thus there exists a unique diffeomorphism
$\Phi:\mathbb{M}\to \E$, such that $\Phi\circ \widehat{\pi}_{\mathbb{M}}=\widehat{\pi}_{\E}$, where
\[
\widehat{\pi}_{\mathbb{M}}:\widehat{\mathrm{S}}(2,n+1)\to \mathbb{M},\quad
\widehat{\pi}_{\E}:\widehat{\mathrm{S}}(2,n+1)\to \E
   \]
are the two natural bundle maps.
Now we show that $\Phi$ is an orientation and time-orientation preserving conformal map.
We cover $\mathbb{M}$ with a family $\{U_{\alpha}\}_{\alpha\in \mathcal{C}'}$ of simply
connected open neighborhoods such that, for each $\alpha\in \mathcal{C}'$,
there exists a cross section $\dot{\mathcal{A}}_{\alpha}:U_{\alpha}\to \mathrm{Q}(\mathbb{M})$. For each $\alpha$,
we choose
a lift $\dot{\mathcal{A}}'_{\alpha}$ of $\dot{\mathcal{A}}_{\alpha}$ to $\widehat{\mathrm{S}}(2,n+1)$.
Then, the map
\[
 \dot{\mathcal{B}}_{\alpha}:=\mathrm{pr}_2\circ \dot{\mathcal{A}}'_{\alpha}\circ \Phi^{-1}:\Phi(U_{\alpha})\to \mathrm{Q}(\E)
  \]
is a cross section of $Q(\E)$. Put $\phi_{\alpha}=\dot{\mathcal{A}}_{\alpha}^*(\phi')$ and $\widetilde{\phi}_{\alpha}=\dot{\mathcal{B}}_{\alpha}^*(\phi'')$. Then,
$(\Phi^{-1})^*(\widetilde{\phi}_{\alpha})$ $=$ $\phi_{\alpha}$.
Hence, $({\phi_{\alpha}}^1_0,\dots,{\phi_{\alpha}}^{n+1}_0)$
and
$(\widetilde{\phi}_{\alpha,0}^{\hskip0.2cm 1},\dots,\widetilde{\phi}_{\alpha,0}^{\hskip0.2cm n+1})$
are two positive oriented and time-oriented conformal coframes, defined on the open neighborhoods
$U_{\alpha}\subset \mathbb{M}$ and $\Phi(U_{\alpha})\subset \E$, respectively, such
that $(\Phi^{-1})^*(\widetilde{\phi}_{\alpha,0}^{\hskip0.2cm i})= {\phi_{\alpha}}^i_0$, $i=1,\dots,n+1$.
This implies that the restriction of $\Phi$ to $U_{\alpha}$ is an orientation and time-orientation
preserving conformal diffeomorphism.

\vskip0.1cm

As for points (2) and (3), if $\mathbb{M}$ is not simply connected, let $\mathrm{p}_{\mathbb{M}}:\mathbb{M}^*\to \mathbb{M}$ be the universal covering space of $\mathbb{M}$, equipped with the oriented,
time-oriented Lorentz structure such that $\mathrm{p}_{\mathbb{M}}$ is conformal, orientation
and time-orientation preserving.
The group $\Gamma$ of deck transformations of the covering $\mathrm{p}_{\mathbb{M}}$ is contained in the
restricted conformal group $\mathcal{C}^{\uparrow}_+(\mathbb{M}^*)$ of $\mathbb{M}^*$.
Then, according to \cite[Theorem 9.1, page 63]{BRD},
there exists a covering group $\pi_{\mathcal{C}}:\mathrm{G}\to \mathcal{C}^{\uparrow}_+(\mathbb{M})$ and
an effective action
\[
  \widehat{\mathrm{L}}:\mathrm{G}\times \mathbb{M}^*\to \mathbb{M^*},
  \]
such that $\mathrm{L}_{\pi_{\mathcal{C}}(g)}\circ \mathrm{p}_{\mathbb{M}}=\mathrm{p}_{\mathbb{M}}\circ \widehat{\mathrm{L}}_g$.
This implies that $\mathrm{G}$ acts by orientation and time-orientation preserving conformal automorphisms.
Thus, $\mathrm{dim}(\mathcal{C}^{\uparrow}_+(\mathbb{M}^*))=(n+3)(n+2)/2$.
By the first part of the proof, we may conclude that $\mathbb{M}^*$ can be identified with
$\E$ {and $\mathrm{G}$ with} $\mathcal{C}^{\uparrow}_+(\E)\cong \OO(2,n+1)$.
In addition, the action of $\OO(2,n+1)$ descends to an action on $\mathbb{M}=\E/\Gamma$.
This implies that, for every $(\mathrm{\mathbf{Z}},\tau')\in \Gamma$ and every
$(\mathrm{\mathbf{X}},\tau)\in \OO(2,n+1)$, we have
\[
 \mathrm{p}_{\mathbb{M}}\circ (\widehat{\mathrm{L}}_{(\mathrm{\mathbf{X}},\tau)}
 \circ \widehat{\mathrm{L}}_{(\mathrm{\mathbf{Z}},\tau')}
 \circ \widehat{\mathrm{L}}_{(\mathrm{\mathbf{X}},\tau)^{-1}})
 = \mathrm{L}_{(\mathrm{\mathbf{X}},\tau)}\circ \mathrm{p}_{\mathbb{M}}
 \circ \widehat{\mathrm{L}}_{(\mathrm{\mathbf{X}},\tau)^{-1}}=
\mathrm{p}_{\mathbb{M}}.
   \]
Then, for every $(\mathrm{\mathbf{Z}},\tau')\in \Gamma$, the image of the map
\[
 f_{(\mathrm{\mathbf{Z}},\tau')}: \OO(2,n+1)\ni (\mathrm{\mathbf{X}},\tau)\longmapsto
(\mathrm{\mathbf{X}},\tau)\star (\mathrm{\mathbf{Z}},\tau')\star (\mathrm{\mathbf{X}},\tau)^{-1}\in \OO(2,n+1)
   \]
is contained in $\Gamma$. Since $\Gamma$ is discrete and $\OO(2,n+1)$ is connected,
$f_{(\mathrm{\mathbf{Z}},\tau')}$ is constant, equal to $(\mathrm{\mathbf{Z}},\tau')$.
Therefore, $\Gamma$ is a subgroup of the center $\widehat{\mathrm{Z}}(2,n+1)$ of $\OO(2,n+1)$.
According to Theorem \ref{ThmA}, we have the following.
\begin{itemize}
\item If $n$ is even, $\Gamma = \left\{(I,2\pi mk)\mid m\in \Z\right\}$, for some positive integer $k$.
Hence, $\mathbb{M}=\E/\Gamma=\mathcal{E}^{1,n}_{\mathrm{I},k}$.

\item If $n$ is odd, then either $\Gamma = \left\{(I,2\pi mk) \mid m\in \Z\right\}$, for some
positive integer $k$, or
$\Gamma = \left\{((-1)^m I,\pi m(1+2k))\mid m\in \Z\right\}$, for some positive integer $k$.
In the first case, $\mathbb{M}=\E/\Gamma=\mathcal{E}^{1,n}_{\mathrm{I},k}$; in the
second case,  $\mathbb{M}=\E/\Gamma=\mathcal{E}^{1,n}_{\mathrm{II},k}$.
\end{itemize}

This concludes the proof of Theorem C.
\end{proof}

\bibliographystyle{amsalpha}

\end{document}